\newcommand{\RR}{\mathbb{R}}
\newcommand{\eps}{\varepsilon}
\DeclareMathOperator\erf{erf}
\newcommand{\half}{\mbox{$\frac{1}{2}$}}
\newtheorem{theorem}{Theorem}[section]
\newtheorem{lemma}[theorem]{Lemma}
\newtheorem{remark}[theorem]{Remark}
\newtheorem{corollary}[theorem]{Corollary}
\theoremstyle{definition}
\newtheorem{definition}[theorem]{Definition}
\numberwithin{equation}{section}
\title{A PDE Approach to the Prediction of a Binary Sequence with Advice from Two
History-Dependent Experts\footnote{This research was partially supported
by NSF grant DMS-1311833.}}
\author{Nadejda Drenska\footnote{Department Mathematics, University of
Minnesota, ndrenska@umn.edu} \ and Robert V. Kohn\footnote{Courant Institute
of Mathematical Sciences, New York University, kohn@cims.nyu.edu}
}
\date{\today}
\begin{document}

\maketitle

\begin{sloppypar} 

\begin{abstract}
The prediction of a binary sequence is a classic example of online machine learning. We like to call it
the ``stock prediction problem,'' viewing the sequence as the price history of a stock that goes up or down one
unit at each time step. In this problem, an investor has access to the predictions of two or more ``experts,'' and strives
to minimize her final-time regret with respect to the best-performing expert. Probability plays no role; rather, the
market is assumed to be adversarial. We consider the case when there are \emph{two history-dependent}
experts, whose predictions are determined by the $d$ most recent stock moves.  Focusing on an appropriate
continuum limit and using methods from optimal control, graph theory, and partial differential equations, we discuss
strategies for the investor and the adversarial market, and we determine associated upper and lower bounds
for the investor's final-time regret. When $ d \leq 4$ our upper and lower bounds coalesce, so the
proposed strategies are asymptotically optimal. Compared to other recent applications of partial differential equations
to prediction, ours has a new element: there are two timescales, since the recent history changes at every step whereas
regret accumulates more slowly.
\end{abstract}

\section{Introduction} \label{INT}

Prediction with expert advice is an area of online machine learning
\cite{CBL}, in which an agent has access to several experts' predictions
and uses them to make a prediction of her own. Probabilistic modeling
is not involved; instead, the agent's goal is to ``minimize regret'' -- i.e.
to minimize her worst-case shortfall with respect to the (retrospectively)
best-performing expert.

Much of the vast literature in this area explores easily-implemented
prediction strategies, assessing their performance on broad classes
of prediction problems. Our focus is different: we examine a special
example -- the prediction of a binary sequence using guidance from two history-dependent
experts (described informally below, and more formally in
Section \ref{GS}) -- looking for prediction strategies that
take into account the character of the example. Our work is
interesting because:

\begin{enumerate}
\item[(a)] we show how considering a scaled version of the problem
facilitates the use of PDE methods;
\item[(b)] we show how the study of history-dependent experts leads
naturally to the use of graph-theoretic methods; and
\item[(c)] we identify \emph{asymptotically optimal} strategies when the
experts use only the most recent $d \leq 4$ stock moves.
\end{enumerate}

(We only consider two experts, and when $d \geq 5$ our strategies might
not be optimal. Subsequent to the work presented here, the first author and
J.~Calder have obtained further results, identifying asymptotically optimal
strategies for any value of $d$ and any number of history-dependent
experts \cite{CD}. The methods used in that work are somewhat different
from -- and complementary to -- those of the present paper.)

The idea that scaling should facilitate the use of PDE methods is
not unprecedented. Our prediction problem can be viewed
as a two-person game, and the PDE describes the value of the game in
an appropriate scaling limit. Other two-person games leading similarly
to nonlinear parabolic equations have been studied, for example,
in \cite{APSS,AS1,KS1,KS2,LM,PS2,PS1}. Much of this literature uses viscosity-solution
techniques, which are required when the solution of the relevant PDE is
not smooth. In this paper we don't need viscosity solutions, since our
PDE has a smooth solution; as a result, our arguments are rather
elementary, using what a control theorist would call ``verification
arguments.''

While the standard name for our problem is the ``prediction of a binary sequence using
expert advice,'' we like to call it the ``stock prediction problem,''
viewing the sequence as the price history of a stock that goes up or down one
unit at each time step. The case with two {\it static} experts (one who
always predicts the stock will go up, the other who always predicts
the stock will go down) has a long history, going back at least to
Thomas Cover's 1965 paper \cite{Cover}. A concise treatment can be
found in Section 8.4 of \cite{CBL}. The optimal strategies can be
determined using dynamic programming, and the predictor's worst-case
regret has an explicit formula, involving the expected value of a
function of many Bernoulli random variables. When the number of steps
is large, the explicit formula can be evaluated using the central
limit theorem. An alternative PDE-based perspective on this continuum
limit was developed by Kangping Zhu in \cite{Zhu}; roughly speaking,
it takes the continuum limit of the dynamic programming principle
rather than the explicit solution formula.

In this paper we consider two {\it history-dependent} experts, whose
predictions depend on the most recent $d \geq 1$ stock movements. Our
overall approach is similar to that of \cite{Zhu} -- which is only
natural, since when $d=0$ the experts are static rather than
history-dependent. When $d \geq 1$, however, the problem is fundamentally
different from those considered by Cover \cite{Cover} and Zhu \cite{Zhu}.
Briefly: regret accumulates slowly, while the recent history changes
quickly. In more detail: if the stock price moves by $\pm 1$ at each
time step then the worst-case regret should be of order $\sqrt{N}$
after $N$ time steps, by arguments related to Blackwell's approachability
theorem (see e.g. Chapter 2 of \cite{CBL}). So the regret increases
about $1/\sqrt{N}$ per time step, which is small (since we are
interested in the asymptotic behavior as $N \rightarrow \infty$).
The recent history, by contrast, is different at each time step.
Thus for history-dependent experts the problem has two well-separated
time scales. In our analysis, the fast timescale is handled using
graph-theoretic methods, while the slow timescale is handled using
methods analogous to those of \cite{Zhu}.

Here is an informal description of our version of the stock prediction
problem. Consider a stock whose price goes up or down one unit at each
time step. An investor can invest (at each timestep) in $-1\leq f \leq 1$
units of stock. There are two experts, whose investment choices are
determined by two functions (call them $q$ and $r$) of the most recent $d$
stock moves. The clock stops after $N$ time steps, and the investor's
goal is to minimize her worst-case final-time shortfall (i.e. ``regret'')
with respect to the (retrospectively) best-performing expert. Since the
focus is on worst-case behavior, this is a two-person game, whose players
are the investor and an adversarial market (which chooses the price
history so as to maximize the investor's final-time regret). To be clear:
the final time $N$ and the experts' investment rules $q$ and $r$ are
public information. At the $k$th timestep, the investor announces her
investment $f_k$, then the market chooses whether the stock goes up
($b_k=1$) or down ($b_k = -1$); these determine the investor's gain $b_k f_k$
and also the experts' gains $b_k q(m_k)$ and $b_k r(m_k)$; here $m_k$
represents $k$ days of history prior to timestep $k$. (For a more
complete description see Section \ref{sec2-1}.)

The investor's worst-case final-time regret can in principle be determined
by dynamic programming, using as state variables the investor's regret with
respect to each expert and the past $d$ stock price moves (see Section
\ref{sec2-3}). But one can do better by taking advantage of the fact that
regret accumulates slowly while the recent history changes at each time
step. Indeed, as time proceeds the sequence of recent histories determines
a walk on a certain directed graph (see Section \ref{sec2-2}). Simplifying
somewhat, let us suppose that the market's choices involve well-selected
cycles on this graph (a key task in our analysis of the investor's strategy
will be to justify this simplification). Then the rapidly-changing
recent history can be accounted for by considering, for each cycle, how regret
accumulates if the market chooses that cycle. This gives an estimate of the
worst-case final-time regret that's independent of the recent history -- a
function only of the investor's regret with respect to each expert and
number of timesteps that remain. (The case when only $d=1$ timestep of
history is used is especially transparent; it is discussed in detail
in Section \ref{bounds-d=1}.)

This game is discrete (in much the way that random walk on a lattice is
discrete). To connect the game with a PDE we consider a scaling limit
(entirely analogous to the way that scaled random walks lead to Brownian
motion, whose backward Kolmogorov equation is the linear heat equation).
To explain heuristically, suppose $v(j,y_1, y_2)$ is the worst-case regret
at the final time $N$, given that the investor's regret with respect to
experts $1$ and $2$ are $y_1$ and $y_2$ at timestep $j$ (and ignoring for
now the importance of the recent history). As already noted above, we
expect $v$ to be of order $\sqrt{N}$ when $(y_1,y_2)= (0, 0)$ and $j=0$,
by arguments related to Blackwell's theorem. Therefore it is natural to
consider the rescaled regret $u=v/\sqrt{N}$ as a function of $x=y/\sqrt{N}$
and $t=j/N$. Writing $\varepsilon = 1/\sqrt{N}$, the rescaled dynamic
programming principle for $u$ involves spatial steps of order $\varepsilon$
and timesteps of order $\varepsilon^2$. The relevant PDE for $u(x,t)$ is,
roughly speaking, the one for which this rescaled dynamic programming
principle is a convergent numerical scheme in the limit
$\varepsilon \rightarrow 0$.

The preceding discussion of scaling neglects the importance of the
recent stock price history. In fact, we identify a limiting PDE for the
investor's worst-case regret only when the experts use up to $d=4$
days of history. This is because we achieve a full understanding of
how the stock price history determines the players' optimal choices
only when $d \leq 4$. But we also have results for $d>4$, involving
upper (respectively lower) bounds for the worst-case regret, obtained by
solving PDEs associated with specific strategies for the investor
(respectively the market).

The PDE's that emerge are 2nd-order nonlinear parabolic equations, solved
backward in time. Such PDE are commonly seen in stochastic control; here
the second-order character comes not from stochasticity, but rather from
the game's Hannan consistency (a situation analogous to that of \cite{KS1}).
Based on our account of the game, the final-time value should be
$\varphi(x_1,x_2) = \max\{x_1,x_2\}$, since the predictor's goal was
to minimize her worst-case regret with respect to the best-performing
expert, i.e. $\max\{y_1,y_2\}$. (Since this function is homogeneous of
degree one, the passage from regret to scaled regret does not change its
form.) It is, however, both natural and convenient to consider the
scaled dynamic programming principle with a more general final-time
value $\varphi(x)$. Indeed, our methods rely on the smoothness of $u$,
so they require $\varphi$ to be sufficiently smooth. We adapt them to
the classic case $\varphi(x) = \max\{x_1,x_2\}$ by approximating it
(above or below) by a smooth function.

What are the PDE's? Actually, there is essentially just \emph{one} PDE.
It is convenient to change variables to $\xi = x_1-x_2$ and $\eta = x_1 + x_2$.
The PDE is then
\begin{equation} \label{pde-intro}
u_t + C u_\eta^{-2}
\left( u_{\xi \xi} u_\eta^2 - 2 u_{\xi \eta} u_\xi u_\eta + u_{\eta \eta} u_\xi^2 \right)
 = 0 \quad \mbox{for $t < T$}
\end{equation}
with the final-time condition $u(T, \xi, \eta) = \varphi(\xi,\eta)$. The
constant $C$ must of course be chosen properly; it reflects the rate at
which regret accumulates (see Section \ref{sec2-3} for an account of how
such a PDE emerges from the dynamic programming principle). The PDE
(\ref{pde-intro}) looks very nonlinear, but it simplifies dramatically
when the final-time condition has the form
$\varphi(\xi,\eta) = c \eta + \overline{\varphi}(\xi)$. (Note that the
classic choice $\max(x_1,x_2)$ has this form, since it equals
$\frac{1}{2} (\eta + |\xi|)$.) Then it is natural to expect that
$u(t,\xi,\eta) = c \eta + \overline{u}(t,\xi)$, and substitution
of this ansatz into the PDE gives the linear heat equation
$\overline{u}_t + C \overline{u}_{\xi \xi} = 0$ with the final-time
condition $\overline{u}=\overline{\varphi}$ at $t=T$. Surprisingly,
something similar can be done for a much broader class of final-time
conditions $\varphi(\xi, \eta)$. Indeed, assuming certain structural
conditions on $\varphi$, for each $y$ the level set $u(t,\xi, \eta) = y$
has the form $\eta =  h(t,\xi; y)$ where $h$ solves a linear heat
equation in $\xi$ and $t$, with appropriate final-time data. This is
explained in \cite{Zhu}; we offer a self-contained treatment as an
Appendix.

How is the investor's strategy linked to the PDE? For two
static experts (i.e. when the two experts make no use of the recent
stock price history) the investor's optimal strategy resembles what
is known in the literature as a potential-based strategy, using the
solution of the PDE as a (time-dependent) potential \cite{Zhu}. In
our history dependent setting, the situation is quite different: the
investor's optimal strategies depend on the recent stock price history
as well as on the solution of the PDE. Our analysis suggests that if
the recent history is $m$, the investor should buy
$f_m = f_m^* + \varepsilon f_m^\#$ units of stock. The
leading-order term $f_m^*$ is state-dependent, but it is
nevertheless obtained by using the solution of the PDE as
something like a potential; the correction $\varepsilon f_m^\#$
is obtained by an entirely different argument, involving the graph
that represents the evolution of stock price histories (see
Sections \ref{sec2-4}, \ref{sec3-1}, and \ref{sec4-1} for further
information in this direction).

What about the market's strategies? Briefly (and informally),
knowing that the investor should choose a strategy of the form
$f_m = f_m^* + \varepsilon f_m^\#$, the market identifies
a particular choice of $f_m^\#$ that works to its advantage,
and ``forces'' the investor to use this strategy by penalizing
other choices (see Sections \ref{sec2-4}, \ref{sec3-2} and \ref{sec4-2} for further
information in this direction).

To put our work in context with respect to the online machine learning
literature, we note that there are numerous articles on the ``prediction
of binary sequences,'' of which \cite{CBL2} is representative. In much of
this literature, the focus is on identifying easily-implemented strategies
for the predictor  (involving ``potential functions'' for example) that
work relatively well (assuring regret of order at most $\sqrt{N}$ after
$N$ timesteps). Our focus is different: we would like to understand an
asymptotically {\it optimal} strategy for the predictor, and the optimal
prefactor of $\sqrt{N}$ in the estimate of the worst-case regret. This
requires identifying asymptotically optimal strategies for the market
as well as for the predictor. Such a complete understanding has to
date been achieved for only a few prediction problems. In the present
paper we achieve such clarity when the history-dependence
involves at most $d =4$ recent stock movements. For $d > 4$, as noted above, we
obtain upper and lower bounds for the worst-case regret, but we
don't know whether they match; this question reduces, as we
explain in Section \ref{LP}, to one about the cycles
in certain de Bruijn graphs.

The online machine learning literature is not restricted to the
prediction of sequences. A different class of examples focuses
directly on the experts' outcomes, letting those be determined
directly (rather than through a time series) by the market. (A
PDE-based discussion of one such problem can be found in \cite{DK},
and a PDE perspective on potential-based strategies can be
found in \cite{KKW1,KKW2,Rokhlin}; PDE methods have also been applied
to another class of problems known as ``drifting games'' \cite{FO}.)
In the present setting the experts' outcomes are highly constrained,
since (i) their progress must be consistent with a time series, and
(ii) their predictions depend, at a given time, on the past $d$ items
in that time series. The graph-theoretic methods we use to deal with
the fast time scale are essentially a means for understanding the effect
of these constraints. Perhaps related methods might be useful for some
other prediction problems with similar constraints on the experts' advice.

In a generic problem involving prediction of sequences, the goal might
be to actually guess the next element of the sequence. In such a
setting the ``loss function'' would measure the error; a typical
example would be the ``absolute value loss'' $|f-b_k|$. In the
stock prediction problem the situation is different -- the investor
and the experts focus instead on how much money they gain or lose.
This seems at first glance very different from the absolute value loss;
however in the context of the stock prediction problem (where each
stock move $b_k$ is limited to $\pm 1$, and the focus is exclusively
on cumulative regret) the use of absolute value loss rather than
financial loss would not really change matters. (This fact is well-known and
elementary; for completeness we nevertheless review it in
Section \ref{sec2-1}.)

We briefly highlight a few recent papers with connections to our work.
The stock prediction problem with two static experts, a discounted
payoff, and no final time was studied in \cite{AP}. The focus there
(like here) is on identification of the investor's and the market's
optimal behavior. While the paper's methods seem somewhat different
from ours, differential equations do play a fundamental role. A
related though somewhat different discussion can be found in
Section 1.5.2 of \cite{Zhu}, where the problem is considered
for a discounted payoff and a fixed final time.

The stock prediction problem with two static experts is a special
case of a more general class of problems studied in \cite{KP}. The
focus there is rather different from ours: rather than considering
just the predictor's regret (i.e. her shortfall with respect to the
best-performing expert), the paper \cite{KP} discusses algorithms
that limit the worst-case loss as well as the worst-case regret.
The analysis there permits any number of experts.

The rest of this paper is organized as follows: Section \ref{GS}
sets the stage, by presenting our problem in full
detail and linking it heuristically to our PDE; it closes, in Section
\ref{sec2-6}, with a summary of our main results. Section \ref{bounds-d=1}
proves our upper and lower bounds in the special case when the experts use
only $d=1$ days of history; this case is treated separately because
it is relatively transparent, while still capturing the main ideas
used for general $d$. Section \ref{LP} introduces the graph-based
methods used to determine the investor's and market's strategies.
A good strategy for the investor leads to an upper
bound for the worst-case final-time regret, while a good strategy
for the market leads to a lower bound for the same quantity; our
proofs of these bounds occupies Section \ref{bounds-d>1}.
Our analysis relies on the PDE (\ref{pde-intro}) having a sufficiently smooth
solution (with uniformly bounded derivatives) when the
final-time function $\varphi$ satisfies some structural properties.
The required estimates follow easily from basic facts about the linear
heat equation when the final-time function has the form
$\varphi(\xi,\eta) = c \eta + \overline{\varphi}(\xi)$. They are, however,
valid for a broader class of final-time functions $\varphi$;
this is proved in \cite{Zhu} but we provide a self-contained
treatment here as an Appendix, for the convenience of the reader.

\section{Getting started} \label{GS}

This section describes our problem in detail and provides a heuristic discussion of
our approach. It closes, in Section \ref{sec2-6}, with a summary of our main results.

\subsection{The game} \label{sec2-1}

The stock prediction problem is a zero-sum, two-person, sequential game played by the
investor and the market. We represent the stock movements by a binary data stream
$b_1, b_2, \ldots$, each with value $-1$ or $+1$.

Since our experts make their predictions using $d$ days of history, we
need some notation for the most recent $d$ stock movements (the current ``state'').
While the obvious representation is $(b_{k-d},\ldots,b_{k-1})$ at time $k$, it is
convenient to use binary notation instead, recording $-1$ as $0$ and $+1$ as $1$. With this
convention, the state at time $k$ is
$$
m_k = \frac{1}{2}(b_{k-d}+1,\dots, b_{k-1}+1) \in  \{0,1\}^d,
$$
which can be viewed as an integer ranging from $0$ to ${2^d}-1$.

At each time step the investor can buy $|f| \leq 1$ units of stock. Her choice of $f$ can
be viewed as a prediction, since if she buys $f_k$ shares at time $k$ and the stock
then moves by $b_k$, she has a profit of $b_k f_k$ (or a loss, if this is negative).
Our two history-dependent experts' predictions are expressed similarly: the experts are
characterized by two functions $q(m)$ and $r(m)$, defined on $\{0,1\}^d$, which represent
their investment choices. The experts should of course not be identical,
\begin{equation} \label{q-and-r-distinct}
\mbox{$q(m) \neq r(m)$ for at least one state $m$},
\end{equation}
and their guidance should be admissible: $|q(m)| \leq 1$ and $|r(m)| \leq 1$
for all $m$. For technical reasons, we need the preceding inequalities to be strict:
\begin{equation} \label{cond-on-experts}
\mbox{we assume that $|q(m)|< 1$ and $|r(m)| < 1$ for all states $m$.}
\end{equation}
(This condition assures that the investor's leading-order
bid $f_m^*$ satisfies $|f_m^*| < 1$, as we'll see in Section \ref{sec2-5}.)

Following the convention of the prediction literature, we focus on the investor's and
the experts' losses rather than their gains. In terms of the \emph{financial loss} function
\begin{equation} \label{financial-loss}
l(f,b_k) = -fb_k
\end{equation}
the investor's and experts' losses due to stock movement $k$ are
evidently $l(f_k,b_k)$, $l(q_k, b_k)$, and $l(r_k, b_k)$, if the investor's prediction
is $f$, the recent history is $m_k$, and the experts' bids are $q_k=q(m_k)$ and
$r_k = r(m_k)$. So the investor's \emph{regret} (or shortfall, or relative loss)
with respect to expert $q$ at step $k$ is
\begin{equation} \label{one-step-regret-q}
l(f, b_k)-l(q_k, b_k) = -fb_k + q_k b_k =b_k(q_k-f),
\end{equation}
and her regret with respect to expert $r$ at step $k$ is similarly
\begin{equation} \label{one-step-regret-r}
l(f, b_k)-l(r_k, b_k) =-fb_k +r_k b_k=b_k(r_k-f) .
\end{equation}
The investor's cumulative regret is a key quantity,
since it captures her overall performance compared to the experts.
We record the cumulative regret with respect to experts $q$ and $r$ by $(x_1,x_2)$.
To be explicit: if the investor's prediction at step $k$ is $f_k$ then after the $M$th
step,
\begin{equation} \label{cumulative-regret}
x_1 = \sum_{k=1}^M l(f_k, b_k)-l(q_k, b_k) \quad \mbox{and} \quad
x_2 = \sum_{k=1}^M l(f_k, b_k)-l(r_k, b_k).
\end{equation}

The language of investment makes the financial loss function $l(f,b) = -fb$ seem very natural.
But (as noted in the Introduction) the situation would be no different if we used the absolute
value loss function $l'(f,b) = |f-b|$. Indeed, since $b$ takes only the values $\pm 1$, the associated
regret with respect to expert $q$ at time $k$ would be
$$
l'(f, b_k)-l'(q_k, b_k) = |f-b_k| - |q_k -b_k| =b_k(q_k-f)
$$
and similarly the associated regret with respect to expert $r$ would be $b_k(r_k-f)$. Comparing
with \eqref{one-step-regret-q}--\eqref{one-step-regret-r}, we see that when only the regret (rather
than the total loss) is being considered, the financial loss and absolute value loss
functions are equivalent.

When the game starts there is no history; we suppose the two experts have some rule for handling this (for example
$q_k = r_k = 0$ if $1 \leq k \leq d$). The specific choice of this rule will have no effect on our analysis, since
we are interested in what happens over many time steps.

\subsection{The underlying graph} \label{sec2-2}

Since the state $m \in \{0,1\}^d$ records the recent history, it changes from one step to the next.
For example, if $d=2$ and the most recent moves were $b_{k-2}=-1$ and $b_{k-1}=1$ then $m_k = (0,1)$
and the next state $m_{k+1}$ can be either $(1,1)$ (if $b_k = 1$) or $(1,0)$ (if $b_k = -1)$. Introducing
some notation: if the current state is $m$, we define
\begin{eqnarray*}
m_{+} & = & \mbox{subsequent state if $b_k=1$, and}\\
m_{-} & = & \mbox{subsequent state if $b_k = -1$}.
\end{eqnarray*}
Thus when
$$
m=\frac{1}{2}(b_{k-d}+1,\dots, b_{k-1}+1) \in  \{0,1\}^d
$$
we have
\begin{eqnarray*}
m_{+} & = & \frac{1}{2}(b_{k-d+1}+1,..., b_{k-1}+1,2), \mbox{ and}\\
m_{-} & = & \frac{1}{2}(b_{k-d+1}+1,..., b_{k-1}+1,0).
\end{eqnarray*}
As the game proceeds, the sequence of states can be visualized using an appropriate directed graph.
It has the states as vertices, and each vertex $m$ is connected to $m_{+}$ and $m_{-}$; the case $d=2$ is shown
in Figure \ref{fig:graph(d=2)withoutcosts}. Evidently the graph has $2^d$ vertices; each vertex has exactly
two outgoing edges; and each vertex has exactly two incoming edges. It is, in fact, a well-studied graph, known
as the $d$-dimensional de Bruijn graph on $2$ symbols (see e.g. page 61 of \cite{West}).
\begin{figure}[h]
\centerline{\includegraphics[width=3in]{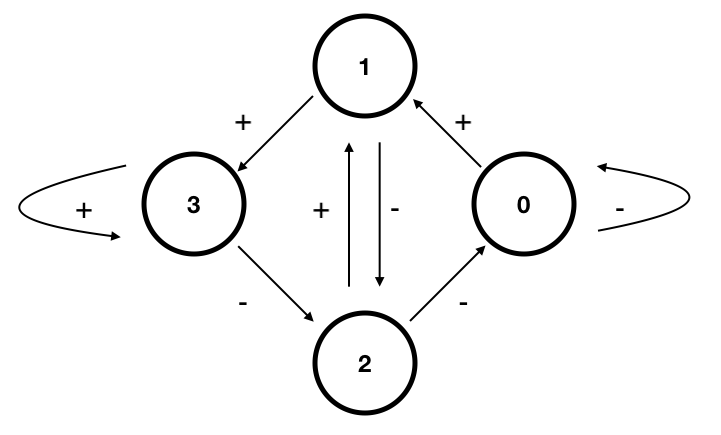}}
\caption{the underlying graph, when $d=2$.}
\label{fig:graph(d=2)withoutcosts}
\end{figure}

Our analysis uses the well-known fact that a closed walk can be decomposed as a union of
simple cycles. To keep our treatment self-contained, we review the relevant
definitions and give the proof of this result.

\begin{definition}
A closed walk is a list of vertices and edges $a_0, e_1, a_1,\dots, e_k, a_k$ such that the initial and final vertices are
the same ($a_k=a_0$), and for $1\leq i \leq k$ the edge $e_i$ has endpoints $a_{i-1}$ and $a_i$.
\end{definition}

\begin{definition}
A simple cycle is a walk without repeated vertices apart from the first one and the last one, which are equal.
\end{definition}

\noindent Where it introduces no confusion, we will identify a cycle by listing its vertices in order. As an example,
when $d=2$ (the case shown in Figure \ref{fig:graph(d=2)withoutcosts}) there are $6$ simple
cycles: two with length one ($00$ and $33$), one with length two ($121$), two with length three
($0120$ and $1321$), and one with length four ($01321$).

\begin{lemma} \label{walkcycles}
A closed walk on a directed graph is a union of simple cycles.
\end{lemma}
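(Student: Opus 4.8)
The plan is to prove Lemma \ref{walkcycles} by induction on the length $k$ of the closed walk. The key observation is that a closed walk of positive length, if it is not already a simple cycle, must revisit some vertex before returning to its start; that repetition lets us ``split off'' a shorter closed walk, and recurse.

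\begin{proof}[Proof of Lemma \ref{walkcycles}]
We argue by (strong) induction on the length $k$ of the closed walk $W = a_0, e_1, a_1, \dots, e_k, a_k$ with $a_k = a_0$.

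If $k=0$ there is nothing to prove (the empty walk is the empty union of cycles), and more substantively, if the vertices $a_0, a_1, \dots, a_{k-1}$ are pairwise distinct then $W$ is itself a simple cycle and we are done.

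Otherwise there exist indices $0 \leq i < j \leq k-1$ with $a_i = a_j$; choose such a pair, and among all such pairs pick one minimizing $j-i$. Then the sub-walk $C = a_i, e_{i+1}, a_{i+1}, \dots, e_j, a_j$ is a closed walk (since $a_i = a_j$), and by the minimality of $j-i$ the vertices $a_i, a_{i+1}, \dots, a_{j-1}$ are pairwise distinct, so $C$ is a simple cycle. Deleting $C$ from $W$ -- that is, forming $W' = a_0, e_1, \dots, a_i, e_{j+1}, a_{j+1}, \dots, e_k, a_k$ -- yields a closed walk of length $k - (j-i) < k$ (it is closed because $a_0 = a_k$ is unchanged, and the splice at $a_i = a_j$ is legitimate since $e_{j+1}$ has endpoints $a_j = a_i$ and $a_{j+1}$). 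By the inductive hypothesis $W'$ is a union of simple cycles, hence so is $W = W' \cup C$.
\end{proof}

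\noindent\textbf{Remarks on the approach.} The main point requiring care is the bookkeeping of the splicing step: one must check that after excising the loop from $a_i$ to $a_j$ the remaining edge-vertex list is still a valid closed walk, which hinges on the edge $e_{j+1}$ having $a_j$ as one of its endpoints and on $a_j$ being identified with $a_i$. Choosing $i,j$ with $j-i$ minimal is the cleanest way to guarantee the excised piece is genuinely a \emph{simple} cycle rather than merely a closed sub-walk, avoiding a nested induction. One small subtlety worth noting is that the lemma as stated permits the union to be a multiset of cycles (a cycle may be used more than once, e.g.\ a walk that traverses the same simple cycle twice), and the induction handles this transparently. I do not anticipate any real obstacle here; the result is classical and the induction is routine once the splicing is set up correctly.
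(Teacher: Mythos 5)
Your proof is correct and follows essentially the same approach as the paper: locate a repeated vertex (you minimize $j-i$, the paper takes the first repetition, i.e. minimal $j$ — either choice guarantees the excised piece is a simple cycle), remove that simple cycle, and recurse on the shorter closed walk. The only difference is presentational: you phrase the recursion as formal strong induction with an explicit base case, whereas the paper states it as an iterative, constructive argument.
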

\begin{proof} The argument is constructive. Given a closed walk, consider the list of
vertices it visits (in order): $a_0, a_1, a_2, \dots, a_k$. (By hypothesis, $a_0=a_k$).
Now start traversing the walk, and consider the first time a vertex is repeated: $a_i=a_j$ with $j > i$.
Evidently $a_i, a_{i+1}, \dots, a_j$ is a simple cycle. Remove this simple cycle from the walk, keeping
$a_i$. This gives a new (shorter) closed walk, to which the same argument can be applied. Continuing,
we eventually get a walk that has no repetitions aside from its intial and final vertices. It is evidently
a simple cycle, so the proof is complete.
\end{proof}

\subsection{The dynamic programming principle before and after scaling} \label{sec2-3}

The investor's worst-case regret is determined by a dynamic programming principle. The goal of this
subsection is to make this explicit. As already explained in the introduction, our main focus will be
on a \emph{scaled} version of the game, and we'll get to that presently. But first we discuss the
dynamic programming problem for the game described by Section \ref{sec2-2}.

Recall that the investor's cumulative regret is recorded by $(x_1,x_2)$. As already noted in
the Introduction, it is convenient to work with $\xi = x_1-x_2$ and $\eta = x_1 + x_2$. If
the investor's choice at step $k$ is $f_k$ then the change in $x$ associated with actions
at step $k$ is, according to (\ref{one-step-regret-q})--(\ref{cumulative-regret}),
$\Delta x = [b_k(q_k-f_k),b_k(r_k-f_k)]$. So the changes in $\xi$ and $\eta$ are
$\Delta \xi = b_k (q_k - r_k)$ and $\Delta \eta = b_k (q_k + r_k - 2f_k)$.
We suppose the game ends at step $N$ (i.e. the last prediction is made at $k=N-1$).

The dynamic programming principle determines the function
$$
U(k,m,\xi,\eta) = \left\{ \begin{tabular}{l}
\mbox{the investor's worst-case minimum final-time regret at step $k$, if the recent history is $m$,}\\
\mbox{the value of $x_1-x_2$ (based on times through $k-1$) is $\xi$, and the value of $x_1 + x_2$ is $\eta$,}
\end{tabular}
\right.
$$
by working backward in time: for $k \leq N-1$
\begin{equation} \label{dpp-discrete}
U (k, m, \xi, \eta) = \min_{|f|\leq 1} \max_{b_k=\pm 1}
U \bigl( k+1, m_{b_k}, \xi +b_k [q(m)-r(m)], \eta + b_k[q(m)+r(m)-2f] \bigr),
\end{equation}
where $m_{b_k}$ denotes the next state, which is determined from $m$ by the value of $b_k$.
Since the investor's regret with respect to the best-performing expert is
$\max\{x_1,x_2 \} = (\eta + |\xi|)/2$, the final-time condition is
\begin{equation} \label{final-time-discrete}
U(N,m,\xi,\eta) = \frac{\eta + |\xi|}{2}.
\end{equation}
Our hypotheses about the flow of information are reflected by the min-max in
\eqref{dpp-discrete}: the investor chooses $f$ knowing that the adversarial market
will see her choice and do whatever is worst for her.

We are interested in what happens over large numbers of steps. As discussed in the
Introduction, we expect $U(1,m,0,0) \sim \sqrt{N}$. Therefore it is natural to
scale time by $N$ and regret by $\sqrt{N}$. Writing $\varepsilon = 1/\sqrt{N}$, the
scaled version of $U$ is
$$
u^\varepsilon (t,m,\xi,\eta) =
\varepsilon U \left( \frac{t}{\varepsilon^2}, m, \frac{\xi}{\varepsilon}, \frac{\eta}{\varepsilon} \right).
$$
It is defined for times $t$ that are multiples of $\varepsilon^2 = 1/N$ between $0$ and $1$. The dynamic
programming principle (\ref{dpp-discrete}) is equivalent to
\begin{equation} \label{DPP}
u^\varepsilon(t, m, \xi, \eta) = \min_{|f|\leq 1} \max_{b_t=\pm 1}
u^\varepsilon \bigl( t+\varepsilon^2, m_{b_t}, \xi +\varepsilon b_t[q(m)-r(m)], \eta +\varepsilon b_t[q(m)+r(m)-2f] \bigr)
\end{equation}
with the same final-time condition as before:
\begin{equation} \label{FT-classic}
u^\varepsilon(1, m, \xi, \eta)=  \frac{\eta + |\xi|}{2}.
\end{equation}

Our goal is to study $u = \lim_{\varepsilon \rightarrow 0} u^\varepsilon$ by combining ideas from PDE and
graph theory. In our study of the continuum limit, it is natural to consider any final time $T$ (rather than
just $T=1$), and it is natural to consider other final-time conditions (not just $(\eta + |\xi|)/2$). Therefore
we shall henceforth assume that $u^\varepsilon$ satisfies (\ref{DPP}) up to a fixed final time $T$, with
a more general final-time condition
\begin{equation} \label{FT-general}
u^\varepsilon(T, m, \xi, \eta)=  \varphi(\xi,\eta).
\end{equation}
Our analysis requires some conditions on the final-time condition $\varphi$;
these will be discussed in Section \ref{sec2-5}.

\subsection{A formal connection to PDE} \label{sec2-4}

To see a connection with PDE, we begin by ignoring the dependence of
$u^\varepsilon$ on $\varepsilon$ and $m$, and combining the dynamic programming principle
(\ref{DPP}) with Taylor expansion. Replacing $u^\varepsilon$
by $u$ in \eqref{DPP} and assuming $u$ is sufficiently smooth, this gives
\begin{align} \label{TaylorMinMax}
u(t,m,\xi,\eta) &= \min_{|f_{t,m}|\leq 1} \max_{b_t=\pm 1}
u\bigl( t+\varepsilon^2, m_{b_t}, \xi +\varepsilon b_t[q(m)-r(m)], \eta +\varepsilon b_t[q(m)+r(m)-2f] \bigr) \nonumber \\
&= \min_{|f_{t,m}|\leq 1} \max_{b_t=\pm 1}  \left\{ u(t,m,\xi,\eta) + \varepsilon A + \varepsilon^2 B +
O(\varepsilon^3) \right\}
\end{align}
with
\begin{equation} \label{defn-of-A}
A=b_t \bigl( [q(m)-r(m)]u_{\xi}+[q(m)+r(m)-2f_{t,m}]u_{\eta} \bigr)
\end{equation}
and
\begin{equation} \label{defn-of-B}
B = u_t + \frac{\langle D^2u [q(m)-r(m),q(m)+r(m)-2f_{t,m}], [q(m)-r(m),q(m)+r(m)-2f_{t,m}] \rangle}{2}.
\end{equation}
In the latter expression, $D^2 u$ is the Hessian of $u$ with respect to its spatial variables $\xi$ and $\eta$:
\begin{equation}
D^2 u = \begin{pmatrix}
 u_{\xi\xi}& u_{\xi\eta} \\
 u_{\xi\eta}& u_{\eta\eta}
 \end{pmatrix} .
 \end{equation}
The order $\varepsilon^0$ term in the min-max (\ref{TaylorMinMax}) is not interesting -- it
is independent of $f_{t,m}$ and $b_t$, and in fact it cancels
with the $u$ that's on the left. So the first interesting term in the min-max is $\varepsilon A$.
If it is nonzero, then the market can choose the sign of $b_t$ to make this term positive -- a bad outcome
for the investor. It is tempting to conclude that the investor's choice of $f_{t,m}$ should make $A$
vanish, but that's not quite right. Rather, the investor's choice of $f_{t,m}$ should
make this term small enough that it interacts with the $\varepsilon^2$ term.  This occurs when
$A = O(\varepsilon)$, i.e.
$$
[q(m)-r(m)]u_{\xi}+[q(m)+r(m)-2f_{t,m}]u_{\eta} = O(\varepsilon),
$$
or equivalently
$$
f_{t,m}=\frac{[q(m)-r(m)]u_{\xi}+[q(m)+r(m)]u_{\eta}}{2u_{\eta}} + O(\varepsilon).
$$
So the investor's choice should take the form
\begin{equation} \label{f_m_old}
f_{t,m}=f^*_{t,m}+\varepsilon f^{\#}_{t,m}
\end{equation}
with
\begin{equation} \label{f^*m}
f^*_{t,m}=\frac{[q(m)-r(m)]u_{\xi}+[q(m)+r(m)]u_{\eta}}{2u_{\eta}}.
\end{equation}
(We shall explain later, in Section \ref{sec2-5}, why this choice is admissible -- that is, why
$|f_{t,m}| \leq 1$ -- under suitable hypotheses upon the final-time function $\varphi$.)
When $f_{t,m}$ has the form (\ref{f_m_old}), the ``first-order term'' from the Taylor expansion becomes
\begin{align*}
\varepsilon A & =
\varepsilon b_t \bigl( [q(m)-r(m)]u_{\xi}+[q(m)+r(m)-2(f^*_{t,m}+\varepsilon f^\#_{t,m})]u_{\eta} \bigr) \\
&=-\varepsilon^2 2 b_t u_{\eta}f^\#_{t,m}.
\end{align*}
As for the ``second-order term'' $\varepsilon^2 B$: evaluating $B$ at $f_{t,m}^*$ rather than
$f_{t,m}$ introduces an error of order $\varepsilon$; this leads (after some algebra) to
\begin{equation} \label{value-of-B}
\varepsilon^2 B = \varepsilon^2 \left( u_t +(q(m)-r(m))^2
\frac{1}{2}\langle D^2u \frac{\nabla^\bot u}{u_{\eta}},\frac{\nabla^\bot u}{u_{\eta}}\rangle  \right)
+ O(\varepsilon^3),
\end{equation}
using the notation
\begin{equation} \label{uperp}
\nabla^\bot u = (-u_{\eta},u_{\xi}) .
\end{equation}
Summarizing the preceding calculation: when $f$ has the form (\ref{f_m_old}), \eqref{TaylorMinMax} reduces (after division by $\varepsilon^2$) to
\begin{equation} \label{TaylorMinMax-reduced}
0 = \min_{f_{t,m}^\#} \max_{b_t=\pm 1}
\left\{ u_t +(q(m)-r(m))^2
\frac{1}{2}\langle D^2u \cdot \frac{\nabla^\bot u}{u_{\eta}},\frac{\nabla^\bot u}{u_{\eta}}\rangle -
2 b_t u_{\eta}f^\#_{t,m} \right\}
\end{equation}
if we ignore the error associated with higher-order terms in the Taylor expansion.

We have thus far ignored the possible dependence of $u$ on $m$. It seems from \eqref{TaylorMinMax-reduced} that
$u$ should indeed depend on the state $m$ as well as on $t$, $\xi,$ and $\eta$. This would be a mess, since we would
need to keep track of how the states evolve.

In fact, the function $u$ we want is a little different from the one discussed thus far. The
min-max should be over the players' (multistep) \emph{strategies}, and the expression under the min-max
should involve the \emph{time-averaged value} of the expression under the min-max in
\eqref{TaylorMinMax-reduced}. By proceeding this way, we shall obtain a function $u$ that depends only on time
and space (not on the current state).

To avoid unwieldy expressions, we introduce the notation
\begin{equation} \label{D_k}
D_k :=
\frac{1}{2}\langle D^2u \cdot \frac{\nabla^\bot u}{u_{\eta}}, \frac{\nabla^\bot u}{u_{\eta}}\rangle,
\end{equation}
where the right hand side is evaluated at time $t_k$ and location $\xi_k, \eta_k$. The expression to be time-averaged
is thus
\begin{equation} \label{defn-of-L}
L(t_k, m, b, \xi_k, \eta_k, f^\#_{t_k, m}):=u_t +(q(m)-r(m))^2 D_k-2b u_{\eta} f^\#_{t_k, m}.
\end{equation}

To identify the optimal strategies and implement the time averaging, let us focus on the case $d=1$,
for which the graph is shown in Figure \ref{fig:graph(d=1)withoutcosts}. It has three simple
cycles: $0-0$, $1-1$, and $0-1-0$.
\begin{figure}[h]
\centerline{\includegraphics[width=3in]{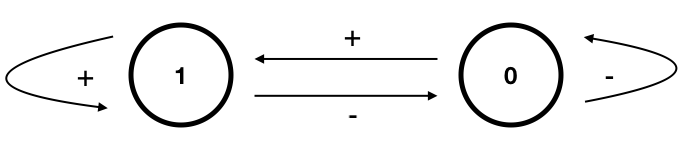}}
\caption{The directed graph when $d=1$.}
\label{fig:graph(d=1)withoutcosts}
\end{figure}
The investor knows that the market could choose to simply repeat one of these cycles.
For the cycle $0-0$, the value of $L$ is
\begin{equation} \label{cycle00}
L(t_k, 0, -1, \xi_k, \eta_k, f^\#_{t_k, 0}) = u_t +(q(0)-r(0))^2 D_k-2(-1)u_{\eta} f^\#_{t_k, 0};
\end{equation}
for the cycle $1-1$ it is
\begin{equation} \label{cycle11}
L(t_k, 1, 1, \xi_k, \eta_k, f^\#_{t_k, 1})=u_t +(q(1)-r(1))^2 D_k-2(+1)u_{\eta} f^\#_{t_k, 1};
\end{equation}
for the two-step cycle $0-1-0$ the time-averaged value is
\begin{multline} \label{cycle010}
\frac{1}{2} \{ L(t_k, 1, -1, \xi_k, \eta_k, f^\#_{t_k, 1})+ L(t_k, 0, 1, \xi_k, \eta_k, f^\#_{t_k, 0}) \} \\
= u_t + \frac{1}{2} \{ (q(1)-r(1))^2 D_k-2(-1)u_{\eta} f^\#_{t_k, 1} + (q(0)-r(0))^2 D_k-2(+1)u_{\eta} f^\#_{t_k, 0} \}.
\nonumber
\end{multline}
Knowing that the market is adversarial, the investor assumes that the market will choose whichever cycle gives her
the worst result, and chooses $f^\#_{t,m}$ to optimize the outcome associated with the worst-case cycle. This leads, for
general $d$, to a linear program (the ``investor's linear program'' studied in Section \ref{sec4-1}). For $d=1$, however, the
solution of the linear program has a simple and intuitive character: the best choice of $f^\#_{t,m}$ is the one that makes
the investor \emph{indifferent} with respect to the three simple cycles. (A similar phenomenon occurs for $d=2,3,4$, as
we will show in Section \ref{sec4-4}.) Writing $D =D_k$ (a harmless convention, since $D_k$ changes only slightly at each
timestep) and writing $\alpha_m$ rather than $f^\#_{t,m}$ (a significant choice, since it restricts the investor's strategy
to depend on time only through the state $m$ and the values of $u_\eta$ and $D$), the condition for indifference is that
$\alpha_0$ and $\alpha_1$ satisfy
\begin{align*}
(q(1) - r(1))^2 D - 2 u_\eta \alpha_1 & = (q(0)-r(0))^2 D + 2 u_\eta \alpha_0 \\
& = \frac{1}{2} \left[(q(0)-r(0))^2 + (q(1) - r(1))^2 \right] D + u_\eta \alpha_1 - u_\eta \alpha_0 .
\end{align*}
This amounts to two linear equations in the two unknowns $\alpha_0$ and $\alpha_1$; the unique solution is easily seen to be
$$
\alpha_0 = \alpha_1 = \frac{(q(1)-r(1))^2 - (q(0) - r(0))^2}{4 u_\eta} D
$$
and the time-averaged value of $L$ for each cycle is then
\begin{equation} \label{time-averaged-value}
u_t + \frac{(q(0)-r(0))^2 + (q(1) - r(1))^2}{2} D.
\end{equation}
Taking this choice of strategy into account, we obtain from \eqref{TaylorMinMax-reduced}
the desired PDE for the case $d=1$:
\begin{equation} \label{PDEsharp1a}
u_t + \frac{1}{2} C^\#_1 \langle D^2 u \frac{\nabla^\bot u}{ u_{\eta}}, \frac{\nabla^\bot u}{ u_{\eta}} \rangle = 0
\end{equation}
with
\begin{equation} \label{PDEsharp1b}
C^\#_1 = \frac{(q(0)-r(0))^2 + (q(1) - r(1))^2}{2}.
\end{equation}
It is to be solved for $t<T$, with final-time data $u=\varphi$ at time $T$.

The preceding derivation is of course quite heuristic. We will justify it with full mathematical rigor in
Section \ref{bounds-d=1}. The ideas presented above provide a strategy for the investor, and the
outline of a proof that by following it she can do at least as well as the solution of the PDE. The heuristic argument
considered only simple cycles, while the rigorous one (presented in Section \ref{sec3-1}) must consider any possible
actions the market might take. This is where Lemma \ref{walkcycles} comes in: while the market's choices might
not produce a walk that's restricted to a simple cycle, the walk they produce can be decomposed as a union of
simple cycles -- so a strategy that works well for every simple cycle actually works well for every walk.

How do we know there is no \emph{better} strategy for the investor? This will be shown in Section \ref{sec3-2} by
considering a particular strategy for the market. Briefly, the strategy ``forces'' the investor to use the values of
$f_m^*$ and $f_m^\#$ identified by our heuristic argument, by penalizing other choices.

Our heuristic discussion has focused on the case $d=1$. The cases $d=2,3,4$ are similar, since in those cases too
the parameters $f_m^\#$ can be chosen to make the investor indifferent with respect to the various simple
cycles; the associated
PDEs have the same form as \eqref{PDEsharp1a}, except that the ``diffusion constant'' is different. (The values of the
constants $C_2^\#$, $C_3^\#$, and $C_4^\#$, corresponding to $d=2,3,4$, are determined in Section \ref{sec4-4}.)
For $d \geq 5$
we do not know whether indifference is achievable. As a result, for $d\geq 5$ our method only gives upper and lower
bounds for the investor's worst-case final-time regret. Both bounds involve PDE's of the form
\eqref{PDEsharp1a} with suitable ``diffusion constants.'' The arguments are largely
parallel to those sketched in this section for $d=1$. To find a good strategy for the investor, we
choose the parameters $f_m^\#$ to minimize the rate at which regret accumulates for the most dangerous cycle. To find
a good strategy for the market, we choose the parameters $f_m^\#$ to maximize the rate at which regret accumulates
for the cycle most favorable to the investor. The optimal choices of $f_m^\#$ are determined by a pair of linear programs,
presented in Section \ref{LP}. One might have expected these linear programs to be dual, but they are not;
Section \ref{sec4-3} explains why not.

\subsection{Properties of $u$ required for our analysis} \label{sec2-5}

The heuristic arguments in Section \ref{sec2-4} rely heavily on Taylor expansion, and so do our proofs.
Therefore we need that our PDE has a solution $u(t,\xi,\eta)$ that is sufficiently smooth, in the
sense that
\begin{equation} \label{require-u-smooth}
\begin{gathered}
\mbox{$u$ has continuous, uniformly bounded derivatives in $(\xi, \eta)$ of order up to $4$,}\\
\mbox{$u_t$ has continuous, uniformly bounded derivatives in $(\xi, \eta)$ of order up to $2$, and}\\
\mbox{$u_{tt}$ is continuous and uniformly bounded,}
\end{gathered}
\end{equation}
all bounds being uniform as $t \uparrow T$.
Our setup also has some additional requirements: it requires that
\begin{equation} \label{require-eta-deriv-pos}
u_\eta > c > 0
\end{equation}
(with a lower bound $c$ that is uniform in space and valid for all times between $t$ and $T$), and
\begin{equation} \label{require-ineq-btwn-xi-and-eta-derivs}
|u_\xi| \leq u_\eta .
\end{equation}
In Section \ref{bounds-d>1} we shall also need
\begin{equation} \label{require-time-deriv-neg}
u_t \leq 0.
\end{equation}
The relevance of \eqref{require-eta-deriv-pos} is clear, since many of the expressions that emerged from our
heuristic discussion had $u_\eta$ in the denominator. To explain the relevance of
\eqref{require-ineq-btwn-xi-and-eta-derivs}, we recall that $\xi = x_1-x_2$ and
$\eta = x_1 + x_2$, where $(x_1,x_2)$ are the investor's regrets with respect to the
two experts. By chain rule, $u_{x_1} = u_\xi + u_\eta$ and $u_{x_2} = -u_\xi + u_\eta$; so
\eqref{require-ineq-btwn-xi-and-eta-derivs} is equivalent to $u_{x_1} \geq 0$ and $u_{x_2} \geq 0$.
These conditions are quite intuitive: since regret accumulates as the game proceeds,
starting at time $t$ with more regret should mean ending at time $T$ with more regret. They are
also necessary for our heuristic calculation to make sense. Indeed, recall from \eqref{f_m_old} that if the
investor is in state $m$ at time $t$, her choice takes the form $f_{t,m} = f_{t,m}^* + \varepsilon f_{t,m}^\#$,
where $f_{t,m}^*$ is given by \eqref{f^*m}. Rewriting the formula for $f_{t,m}^*$ in terms of $u_{x_1}$ and
$u_{x_2}$ instead of $u_\xi$ and $u_\eta$, one finds that
\begin{equation} \label{u_x1+u_x2}
f^*_{t,m}= q(m)\frac{u_{x_1}}{u_{x_1}+u_{x_2}}+r(m)\frac{u_{x_2}}{u_{x_1}+u_{x_2}}.
\end{equation}
Since $u_{x_1}$ and $u_{x_2}$ are nonnegative (and their sum is strictly positive, by \eqref{require-eta-deriv-pos}),
$f^*_{t,m}$ is a weighted average of the experts' choices $q(m)$ and $r(m)$. The rules of the game
require that $|f_{t,m}| \leq 1$. Since $f_{t,m} = f_{t,m}^* + \varepsilon f_{t,m}^\#$
and we don't know the sign of $f_{t,m}^\#$ in advance, we need $|f_{t,m}^*| < 1$. This is assured by
\eqref{u_x1+u_x2}, together with our hypothesis \eqref{cond-on-experts}) that
$|q(m)| < 1$ and $|r(m)| < 1$ for every state $m$.

Condition \eqref{require-time-deriv-neg} reflects again the idea that regret accumulates as the game proceeds.
If we start the game a little later (with the same initial regrets, and the same final time $T$) then
the total time that the game runs is smaller, so the final-time regret should be smaller. Thus
$u(t,\xi,\eta)$ should be a decreasing function of $t$ when $\xi$ and $\eta$ are held fixed.

These conditions on $u$ are effectively conditions on the final-time data $\varphi$. As already noted
in the Introduction, it is very natural to focus on final-time data of the form
\begin{equation} \label{special-final-time-data}
\varphi(\xi,\eta) = c\eta + \overline{\varphi}(\xi).
\end{equation}
Then the PDE
\begin{equation} \label{our-pde-repeated}
u_t + \frac{1}{2} C \langle D^2 u \frac{\nabla^\bot u}{ u_{\eta}}, \frac{\nabla^\bot u}{ u_{\eta}} \rangle = 0
\end{equation}
with $u = \varphi$ at $t=T$ is solved by $u = c\eta + \overline{u}(t,\xi)$, where
\begin{equation} \label{reduces-to-heat-equation}
\overline{u}_t + \frac{1}{2} C \overline{u}_{\xi \xi} = 0 \
\mbox{for $t<T$, with $\overline{u} = \overline{\varphi}$ at $t=T$.}
\end{equation}
(We do not need to discuss in what class the solution of \eqref{our-pde-repeated} is unique, since
our verification arguments rely on the smoothness of $u$, not its uniqueness.) By standard results about
the linear heat equation, $u$ has the required properties provided that $\overline{\varphi}$ has uniformly
bounded third derivatives, $|\overline{\varphi}_\xi| \leq c$ for all $\xi$, and
$\overline{\varphi}_{\xi \xi} \geq 0$.

Interestingly, our PDE can be reduced to the linear heat equation for a much more general class of
final-time data. We first learned this from Y. Giga. The reduction was studied in detail by Kangping
Zhu in \cite{Zhu}, and it is reviewed in an appendix to this paper. In particular, we show in the Appendix
that our PDE \eqref{our-pde-repeated} with final data $\varphi$ has a solution satisfying \eqref{require-u-smooth}--\eqref{require-time-deriv-neg} provided
\begin{gather}
\mbox{$\varphi$ is $C^4$ with uniformly bounded derivatives of order up to $4$,} \label{require-phi-smooth}\\
\mbox{$\varphi_\eta > c > 0$ for some constant $c$,} \label{require-phi-eta-pos}\\
|\varphi_\xi| \leq \varphi_\eta, \mbox{ and} \label{require-cond-on-phi-xi}\\
\varphi_{\xi \xi} \varphi_\eta^2 - 2 \varphi_{\xi \eta} \varphi_\xi \varphi_\eta +
\varphi_{\eta \eta} \varphi_\xi^2 \geq 0. \label{require-cond-neg-time-deriv}
\end{gather}

For machine learning, we are especially interested in the final-time data
$\varphi(\xi,\eta) = \frac{1}{2}(\eta + |\xi|) = \max \{x_1,x_2\}$.
This has the form \eqref{special-final-time-data} with $\overline{\varphi} = |\xi|/2$, so $u$ is determined by solving the
linear heat equation \eqref{reduces-to-heat-equation} with final-time data $|\xi|/2$. Evidently $u$ is \emph{not} uniformly
$C^4$, since its second derivatives with respect to $\xi$ blow up at $t=T$. Our method can still be applied, however, by
bounding this $\varphi$ above or below by a smooth function and considering the associated $u$.

\subsection{Summary of our main results} \label{sec2-6}

In Section \ref{sec2-4} we argued heuristically that our scaled value function
$u^\varepsilon$ is related, in the limit $\varepsilon \rightarrow 0$, to the solution
of a PDE. The rest of this paper is devoted to proving rigorous results of this type. Careful
statements will be found in the subsequent sections, but here is a brief summary:
\begin{itemize}
\item In our heuristic discussion, we eventually focused on the special case $d=1$, when the
experts' predictions depend only on the most recent market move. Our rigorous analysis also begins with
this case, since it permits us to address the essential issues in a relatively uncluttered environment.
Theorems \ref{1up} and \ref{1low} assume that the solution $u$ of our heuristic PDE \eqref{PDEsharp1a}
satisfies \eqref{require-u-smooth}--\eqref{require-ineq-btwn-xi-and-eta-derivs}.
They provide upper and lower bounds on $u^\varepsilon$, which taken together show that
$|u^\varepsilon - u| \leq C [(T-t) + \varepsilon] \varepsilon$. While the classic case $\varphi(\xi,\eta) =
\frac{1}{2}(\eta + |\xi|)$ is not covered by Theorems \ref{1up} and \ref{1low}, the additional ideas needed to
handle it were already present in \cite{Zhu}; Theorem \ref{1classic} uses them to show that for this classic case
(and still assuming $d=1$) we have $|u^\varepsilon - u| \leq C \varepsilon |\log \varepsilon|$.

\item In our heuristic discussion, the predictor's strategy had the form $f_m = f_m^* + \varepsilon f_m^\#$
(see e.g. \eqref{f_m_old}). The value of $f_m^*$ was immediately clear, but the value of $f_m^\#$ required more thought.
In Section \ref{sec2-4} we argued that the best choice of $f_m^\#$ was the one that made the investor
indifferent between the simple cycles of the relevant graph. For general $d$ we do not know whether there exists
such a choice of $f_m^\#$. But every choice of $f_m^\#$ represents a candidate strategy for the investor, and there is
a linear program that identifies the best. Our strategies for the market have a similar character: the
market ``forces'' the investor to make a particular choice of $f_m^\#$ (by penalizing her if she doesn't), and there is
a linear program that identifies the market's best choice. These linear programs are discussed in
Section \ref{LP}. For general $d$ we do not have explicit solutions of the linear programs, and we do not know
whether their optimal values match. Our understanding is, however, more complete for $d=2,3,4$; we shall
show in Section \ref{sec4-4} that these cases are like $d=1$: there is a choice of
$f_m^\#$ that achieves indifference. This choice is an explicit solution to both linear programs, and it demonstrates
that their optimal values match.

\item Theorems \ref{Dup} and \ref{Dlow} show that any admissible choice for the investor's linear program determines
a PDE-based upper bound for $u^\varepsilon$, and any admissible choice for the market's linear program determines
a PDE-based lower bound for $u^\varepsilon$. When the two linear programs have the same optimal value (which happens
at least for $d=2,3,4$) we obtain an estimate of the form
$|u^\varepsilon - u| \leq C [(T-t) + \varepsilon] \varepsilon$, entirely analogous
to the situation for $d=1$. For the classic final-time
data $\varphi = \frac{1}{2}(\eta + |\xi|)$ similar estimates hold, except that (as for $d=1$) the
error is of order $\varepsilon |\log \varepsilon|$. (This is the assertion of Theorem \ref{DclassicU}.)
\end{itemize}

Throughout this paper there are only {\it two} history-dependent experts. Subsequent to this work,
the first author and J. Calder have studied the more general case of $n$ history-dependent experts \cite{CD},
using methods quite different from those of the present work. That paper identifies
$u = \lim_{\varepsilon \rightarrow 0} u^\varepsilon$ for any number of experts $n$ and any finite $d$.
However, when the present work succeeds in determining $u$ -- specifically, when $n=2$ and $d \leq 4$ -- our
approach gives a better convergence rate ($\varepsilon$ for smooth data and
$\varepsilon |\log \varepsilon|$ for the classic case, whereas the rate obtained in
\cite{CD} is $\varepsilon^{1/3}$ and $\varepsilon^{1/3} |\log \varepsilon|$).

\section{Upper and lower bounds for $d=1$}  \label{bounds-d=1}

This section provides a fully rigorous treatment of the special case $d=1$, when the experts'
advice depends only on the most recent stock move. The upper bound is proved by considering a
good strategy for the investor -- namely, the one developed in Section \ref{sec2-4}. The
lower bound is proved by considering a good strategy for the market -- namely, the one
described at the beginning of Section \ref{sec3-2}.

\subsection{The upper bound, when $\varphi$ is regular} \label{sec3-1}
We assume throughout this section that $d=1$. Let $u^\varepsilon(t,m,\xi,\eta)$ be
defined by the dynamic programming principle \eqref{DPP} with final value
$u^\varepsilon (T,m,\xi,\eta) = \varphi(\xi,\eta)$ (it is defined only for times $t$ such
that $(T-t)/\varepsilon^2$ is an integer). Let $u(t,\xi, \eta)$ be the solution of the PDE
\begin{align} \label{PDEsharp1}
\begin{split}
&u_t+\frac{1}{2} C^\#_1\langle D^2 u \frac{\nabla^\bot u}{ u_{\eta}},
\frac{\nabla^\bot u}{ u_{\eta}} \rangle = 0,  \\
& u(T,\xi,\eta) = \varphi(\xi, \eta),
\end{split}
\end{align}
where
\begin{equation} \label{const1}
C^\#_1=\frac{(q(1)-r(1))^2+(q(0)-r(0))^2}{2}.
\end{equation}
Our goal is to prove

\begin{theorem} \label{1up}
Suppose $d=1$ and assume the PDE solution $u$ satisfies \eqref{require-u-smooth}--\eqref{require-ineq-btwn-xi-and-eta-derivs}.
Then there is a constant $C$ (independent of $\varepsilon$, $t$, and $T$) such that
\begin{equation} \label{upper-bound-statement-d=1}
u^\varepsilon(t, m, \xi, \eta)\leq u(t,\xi,\eta)+C[(T-t) + \varepsilon]\varepsilon
\quad \mbox{for $t < T$, $\xi \in \mathbb{R}$, $\eta \in \mathbb{R}$, and $m\in\{0,1\}$}
\end{equation}
whenever $\varepsilon$ is small enough and $t$ is such that $N=(T-t)/\varepsilon^2$ is an integer.
\end{theorem}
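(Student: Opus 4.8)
The plan is to run a verification argument: I will show that the PDE solution $u$, when suitably perturbed, is a \emph{supersolution} of the dynamic programming principle \eqref{DPP}, so that an induction backward in time from $t=T$ yields the claimed bound on $u^\varepsilon$. Concretely, set $w(t,\xi,\eta) = u(t,\xi,\eta) + K[(T-t)+\varepsilon]\varepsilon$ for a constant $K$ to be chosen, and let the investor play the explicit strategy from Section~\ref{sec2-4}: in state $m$ she bids $f_{t,m} = f^*_{t,m} + \varepsilon\alpha_m$, with $f^*_{t,m}$ given by \eqref{f^*m} and $\alpha_0 = \alpha_1 = \frac{(q(1)-r(1))^2-(q(0)-r(0))^2}{4u_\eta}D$, where $D = D(t,\xi,\eta)$ is the expression \eqref{D_k}. (Admissibility $|f_{t,m}|\le 1$ for small $\varepsilon$ follows from \eqref{cond-on-experts} and \eqref{u_x1+u_x2} as in Section~\ref{sec2-5}; the correction term $\varepsilon\alpha_m$ is bounded using \eqref{require-u-smooth}, \eqref{require-eta-deriv-pos}.) The goal is the one-step inequality
\begin{equation} \label{one-step-super}
w(t,m,\xi,\eta) \;\ge\; \max_{b=\pm 1} w\bigl(t+\varepsilon^2,\, m_b,\, \xi+\varepsilon b[q(m)-r(m)],\, \eta+\varepsilon b[q(m)+r(m)-2f_{t,m}]\bigr)
\end{equation}
for every $m$ and every market response $b$, valid for all $t<T$ with $(T-t)/\varepsilon^2$ an integer. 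Since $u^\varepsilon$ is defined by the min-max DPP while the investor here commits to one specific strategy, \eqref{one-step-super} implies $u^\varepsilon(t,m,\xi,\eta)\le w(t,m,\xi,\eta)$ once we know the terminal comparison $u^\varepsilon(T,\cdot)=\varphi = u(T,\cdot)\le w(T,\cdot)$, which is immediate.

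The heart of the matter is \eqref{one-step-super}, and this is where the two-timescale structure enters: a \emph{single} step does \emph{not} give the clean inequality, because the $O(\varepsilon)$ term $\varepsilon A$ in the Taylor expansion \eqref{TaylorMinMax} is not sign-definite — the market picks $b$ to make $-2b u_\eta \alpha_m$ positive. The resolution is to track the walk on the de Bruijn graph (here the $d=1$ graph of Figure~\ref{fig:graph(d=1)withoutcosts}) and use Lemma~\ref{walkcycles}: any closed walk decomposes into simple cycles $0\!-\!0$, $1\!-\!1$, $0\!-\!1\!-\!0$, and by the indifference computation of Section~\ref{sec2-4} the time-averaged one-step increment over any such cycle is $u_t + C^\#_1 D + O(\varepsilon)$, which equals $O(\varepsilon)$ by the PDE \eqref{PDEsharp1} — up to errors controlled by \eqref{require-u-smooth} and the slow variation of $u_\eta$, $D$ along a cycle. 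So rather than a pointwise one-step supersolution property I will prove a \emph{telescoped} estimate: for any market play producing a walk from time $t$ to time $T$, summing the one-step increments of $u$ along the walk, grouping the steps into simple cycles plus a bounded-length remainder, and using that each cycle contributes $u_t\cdot(\text{cycle length})\cdot\varepsilon^2 + C^\#_1 D\cdot(\text{cycle length})\cdot\varepsilon^2 + O(\varepsilon^3\cdot\text{length})=O(\varepsilon^3\cdot\text{length})$ per cycle, we get that the total change in $u$ from $t$ to $T$ is at most $C(T-t)\varepsilon$ plus an $O(\varepsilon)$ boundary correction from the incomplete final cycle. Re-expressing: $u^\varepsilon(t,m,\xi,\eta) = u^\varepsilon(T,\cdot) - (\text{accumulated change}) \le \varphi + C[(T-t)+\varepsilon]\varepsilon$ along the investor's strategy, hence for the min over investor strategies as well.

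To make the telescoping rigorous I would: (i) fix $\varepsilon$, $t$, a state $m$, and an arbitrary market sequence $b_t, b_{t+\varepsilon^2},\dots$ up to time $T$, which determines a walk $m=a_0, a_1,\dots, a_N$ on the graph; (ii) for each single step, Taylor-expand $u$ to third order and substitute the investor's bid $f^*+\varepsilon\alpha$, obtaining the increment $u(\text{next})-u(\text{now}) = \varepsilon^2 L(t_k,a_k,b_k,\xi_k,\eta_k,\alpha_{a_k}) + O(\varepsilon^3)$ with $L$ as in \eqref{defn-of-L} and the $O(\varepsilon^3)$ bound uniform by \eqref{require-u-smooth} (for the $\varepsilon^3$ spatial terms) and $u_{tt}$ bounded (for the time Taylor term); (iii) decompose $a_0,\dots,a_N$ into simple cycles via Lemma~\ref{walkcycles} plus at most one short leftover path of length $<2^d=2$; (iv) on each simple cycle, sum the $L$-values: by the indifference identity the sum is $(\text{length})\cdot(u_t + C^\#_1 D)$ \emph{evaluated at a single point}, plus a discretization error $O(\varepsilon\cdot\text{length})$ because $\xi_k,\eta_k,t_k$ drift by $O(\varepsilon)$ within the cycle and $u_t, u_\eta, D$ are Lipschitz by \eqref{require-u-smooth}; the main term vanishes by the PDE; (v) sum over all cycles — there are at most $N = (T-t)/\varepsilon^2$ steps total, so the accumulated error is at most $C\cdot N\cdot\varepsilon^3 = C(T-t)\varepsilon$; (vi) add the leftover-path contribution, bounded by $C\varepsilon^2$ per step times $O(1)$ steps, i.e. $O(\varepsilon^2)$, absorbed into the $C\varepsilon\cdot\varepsilon$ term. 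Choosing $K$ larger than all the accumulated constants closes the induction. The main obstacle is step (iv)–(v): one must verify that the discretization errors incurred by ``freezing'' $u_t$, $u_\eta$, and $D$ at one point of each cycle, when summed over $O(\varepsilon^{-2})$ steps, still total only $O(\varepsilon)$ — this needs the uniform bounds \eqref{require-u-smooth} on derivatives up to order $4$ (so that $D^2u$ and $\nabla u$ are Lipschitz in $(\xi,\eta)$ and $C^1$ in $t$) and crucially the uniform lower bound \eqref{require-eta-deriv-pos} on $u_\eta$, since $u_\eta$ sits in denominators throughout $f^*$, $\alpha_m$, and $D$. Everything else is bookkeeping of Taylor remainders.
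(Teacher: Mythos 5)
Your overall approach---fix the investor strategy $f^* + \varepsilon f^\#$ from Section~\ref{sec2-4}, let the market play adversarially along the DPP, follow the induced walk on the $d=1$ de Bruijn graph, decompose that walk into simple cycles, and show that the accumulated increments of $u$ are $O((T-t)\varepsilon + \varepsilon^2)$---is the right idea, and in fact it is essentially the strategy the paper uses for the \emph{general-$d$} upper bound (Theorem~\ref{Dup}). For $d=1$ the paper does something slightly more elementary: instead of invoking Lemma~\ref{walkcycles}, it lists the four cases $(m_k,b_k)$ directly, observes that only the $0\!\to\!1$ and $1\!\to\!0$ transitions leave order-$\varepsilon^2$ residues, pairs those transitions as $i_1<j_1<i_2<j_2<\dots$, and uses that the paired gaps satisfy $\sum_n(j_n-i_n)\le N$. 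That pairing is exactly what makes the drift of $D_k$ controllable. Your write-up goes the cycle-decomposition route, which is fine, but it runs into a genuine gap at steps (iv)--(v).

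The gap: you assert that within a simple cycle ``$\xi_k,\eta_k,t_k$ drift by $O(\varepsilon)$,'' hence the freezing error per cycle is $O(\varepsilon\cdot\text{length})$ and the total is $O(N\varepsilon^3)$. This is false. The simple cycles extracted by the procedure of Lemma~\ref{walkcycles} need not consist of \emph{temporally consecutive} steps: after removing an earlier cycle, the residual walk splices together original time indices that may be far apart, so the next extracted cycle can contain two steps separated by an arbitrary number of original time steps. (For $d=1$: a $0\!-\!1\!-\!0$ cycle is one $0\!\to\!1$ edge and one later $1\!\to\!0$ edge, and any number of $0\!-\!0$ or $1\!-\!1$ self-loops may have been pruned in between.) The drift of $u_t$, $u_\eta$, $D$ between the two steps of such a cycle is then $O((j-i)\varepsilon)$ where $j-i$ is the temporal span, not $O(\varepsilon)$. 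What rescues the argument is that the relevant temporal spans are \emph{disjoint intervals}, so they sum to at most $N$, giving $\sum_{\text{cycles}}(\text{span})\cdot\varepsilon\cdot\varepsilon^2\le CN\varepsilon^3 = C(T-t)\varepsilon$. For $d=1$ this disjointness is immediate (it is precisely the ordering $i_1<j_1<i_2<j_2<\dots$ in \eqref{same-number}); for general $d$ the paper proves it as the separate Lemma~\ref{walkcycles-refined}, which says that in the decomposition the instances of any one simple cycle are non-interleaved in time. You need one of these two facts (or an equivalent) to make step~(v) work, and your proposal does not supply it.

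Two smaller remarks. First, Lemma~\ref{walkcycles} is stated for \emph{closed} walks, while the walk $m_0,\dots,m_N$ generated by the game is generally not closed; you mention a ``leftover path,'' which is the right fix for $d=1$, but one should say explicitly that removing all simple cycles from a non-closed walk leaves a simple path (the paper instead closes the walk by appending part of an Eulerian circuit, which is what it does for general $d$). Second, the opening ``supersolution'' framing via $w=u+K[(T-t)+\varepsilon]\varepsilon$ and the one-step inequality \eqref{one-step-super} is not really used: you immediately concede that the one-step inequality fails pointwise and pass to a telescoped estimate, which is precisely the paper's monotone-sequence argument---so the supersolution scaffolding can be dropped without loss.
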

\begin{proof}
An outline of the proof is as follows: to estimate $u^\varepsilon(t_0,m_0,\xi_0,\eta_0)$, we shall
define a sequence $(t_k,m_k, \xi_k, \eta_k)$ along which $u^\varepsilon$ is monotone
$$
u^\varepsilon(t_0, m_0, \xi_0, \eta_0) \leq  u^\varepsilon(t_1, m_1, \xi_1, \eta_1)\leq
\dots \leq u^\varepsilon(t_N, m_N, \xi_N, \eta_N)
$$
with $t_N=T$, so that
$$
u^\varepsilon(t_N, m_N, \xi_N, \eta_N)=\varphi(\xi_N, \eta_N)=u(t_N, \xi_N, \eta_N).
$$
Then we'll show that $u$ is nearly constant along the sequence:
\begin{equation} \label{nearly-constant-d=1}
|u(t_0, \xi_0,\eta_0)  - u(t_N,\xi_N, \eta_N)| \leq C[ (T-t) + \varepsilon ] \varepsilon.
\end{equation}
These estimates lead immediately to \eqref{upper-bound-statement-d=1}.

To define the sequence $(t_k,m_k,\xi_k,\eta_k)$, it suffices to explain the
choice of $(t_1,m_1,\xi_1,\eta_1)$ (then the rest of the sequence is determined similarly, step
by step). Recall the dynamic programming principle \eqref{DPP}, which we can write more
compactly by defining, in terms of the investor's choice $f$,
\begin{equation} \label{defn-v}
v= \begin{pmatrix}
v^1\\
v^2
\end{pmatrix}
=
\begin{pmatrix}
q(m)-r(m)\\
q(m)+r(m)-2f
\end{pmatrix} .
\end{equation}
The dynamic programming principle then says
\begin{equation} \label{DPP-compact}
u^\varepsilon(t, m, \xi, \eta) =  \min_{|f|\leq 1} \max_{b_t=\pm 1}
u^\varepsilon(t+\varepsilon^2, m_{b_t}, \xi +\varepsilon b_t v^1, \eta+ \varepsilon b_t v^2).
\end{equation}
It follows that for any choice of $f$,
\begin{equation} \label{DPP-after-choosing-f}
u^\varepsilon(t, m, \xi,\eta) \leq \max_{b_t=\pm 1}
u^\varepsilon(t+\varepsilon^2, m_{b_t}, \xi +\varepsilon b_t v^1, \eta+ \varepsilon b_t v^2).
\end{equation}
Applying this with $(t, m, \xi,\eta)=(t_0, m_0, \xi_0,\eta_0)$ and taking $b_0$ to achieve the
max on the RHS, we see that for
$t_1=t_0 + \varepsilon^2$, $m_1=(m_0)_{b_0}$, $\xi_1= \xi_0 + \varepsilon b_0 v^1$,
$\eta_1 = \eta_0 + \varepsilon b_0 v^2$ we have the desired inequality
$u^\varepsilon(t_0, m_0, \xi_0,\eta_0) \leq u^\varepsilon(t_1, m_1, \xi_1,\eta_1)$.
While the preceding argument works for any choice of $f$, we must make a special choice
if we want the solution of the PDE to be nearly constant along the sequence. We therefore
choose $f$ as indicated by the heuristic discussion in Section \ref{sec2-4}:
\begin{equation} \label{f-when-d=1}
\begin{split}
f_{t_0,1} &= f^*_{t_0,1} + \varepsilon f^\#_{t_0,1}\\
f_{t_0,0} & = f^*_{t_0,0} + \varepsilon f^\#_{t_0,0}\\
f^*_{t_0,m} & =  \frac{(q(m)-r(m))u_{\xi}+(q(m)+r(m)) u_{\eta}}{2u_{\eta}} \\
f^\#_{t_0,1} &= f^\#_{t_0,0}=  \frac{(q(1)-r(1))^2-(q(0)-r(0))^2}{4 u_{\eta}}D.
\end{split}
\end{equation}
with the familiar convention that
\begin{equation} \label{D-reminder}
D =
\frac{1}{2}\langle D^2u \cdot \frac{\nabla^\bot u}{u_{\eta}}, \frac{\nabla^\bot u}{u_{\eta}}\rangle,
\end{equation}
and the understanding that $u_\eta$, $u_\xi$, and $D$ are evaluated at $(t_0,\xi_0,\eta_0)$.
As we explained in Section \ref{sec2-5}, this choice of $f$ is admissible (i.e. $|f|\leq 1$)
if $\varepsilon$ is sufficiently small. (This is our only smallness condition on $\varepsilon$.)
In summary: given $(t_0,m_0,\xi_0,\eta_0)$, the point $(t_1,m_1,\xi_1,\eta_1)$ is the location
that shows up on the RHS of the dynamic programming principle when $f$ is chosen by
\eqref{f-when-d=1} and $b_0$ achieves the max over $b_0=\pm 1$. The rest of the sequence
$(t_k,m_k, \xi_k, \eta_k)$ is determined similarly, step by step. (The values of $D$, $f_m^*$,
$b$, and $f_m^\#$ at step $k$ will be called $D_k$, $f_{k,m}^*$, $b_k$, and
$f_{k,m}^\#$.)

Our remaining task is to prove the near-constancy of $u$ along our sequence, \eqref{nearly-constant-d=1}.
We begin by estimating the increments
$$
U_k := u(t_{k+1}, \xi_{k+1},\eta_{k+1})-u(t_{k}, \xi_{k}, \eta_{k}).
$$
There are four cases, depending on the values of $m_k$ and $b_k$.
\medskip

\noindent \textbf{Case 1}:  $m_k=0, b_k=-1$.\\
This case is relatively easy, since the market is effectively following one of the cycles
in the $d=1$ graph (namely, the cycle $0-0$). Taylor expanding $u$ around $(t_k,\xi_k,\eta_k)$
as we did in \eqref{TaylorMinMax}, the calculation in Section \ref{sec2-4} shows that
\begin{align*}
U_k &= \varepsilon^2 \bigl[ u_t + (q(0)-r(0))^2 D_k - 2 (-1) u_\eta f_{k,0}^\# \bigr]  + O(\varepsilon^3)\\
& = \varepsilon^2 L(t_k,0, -1, \xi_k, \eta_k, f_{k,0}^\#) + O(\varepsilon^3)
\end{align*}
where for the second line we used the definition of $L$, \eqref{defn-of-L} (which reduces in this
case to \eqref{cycle00}). Moreover, the value of $L(t_k,0, -1, \xi_k, \eta_k, f_{t_k,0}^\#)$
is exactly $u_t + C_1^\# D_k$ by \eqref{time-averaged-value}, which equals zero by the PDE
\eqref{PDEsharp1}. Thus, in case $1$ we have
$$
U_k = O(\varepsilon^3).
$$
\medskip

\noindent \textbf{Case 2}:  $m_k=1, b_k=1$.\\
This case is similar to the first, since the market is effectively following another cycle in
the $d=1$ graph (namely, the cycle $1-1$). Arguing as in Case 1, we get
\begin{align*}
U_k &= \varepsilon^2 \bigl[ u_t + (q(1)-r(1))^2 D_k - 2 (+1) u_\eta f_{k,1}^\# \bigr] + O(\varepsilon^3)\\
& = \varepsilon^2 L(t_k,1, 1, \xi_k, \eta_k, f_{k,1}^\#) + O(\varepsilon^3)\\
& = O(\varepsilon^3).
\end{align*}
\medskip

\noindent \textbf{Case 3}:  $m_k=0, b_k=1$.\\
This case is different, since the market is doing just half of the two-step cycle $0-1-0$.
Starting as in the previous cases, we have
\begin{align*}
U_k &= \varepsilon^2 \bigl[ u_t + (q(0)-r(0))^2 D_k - 2 (+1) u_\eta f_{k,0}^\# \bigr]
+ O(\varepsilon^3)\\
& = \varepsilon^2 L(t_k,0,1, \xi_k, \eta_k, f_{k,0}^\#) + O(\varepsilon^3).
\end{align*}
However the value of $L$ in this case is no longer $0$. Rather, it is
\begin{multline*}
u_t + (q(0)-r(0))^2 D_k - 2 u_\eta f_{k,0}^\#  =
u_t + (q(0)-r(0))^2 D_k - \frac{1}{2} \bigl[ (q(1)-r(1))^2 - (q(0)-r(0))^2 \bigr] D_k\\
 = u_t + \frac{1}{2} \bigl[ (q(0)-r(0))^2 + (q(1)-r(1))^2 \bigr] D_k +
\bigl[ (q(0)-r(0))^2 - (q(1)-r(1))^2 \bigr] D_k.
\end{multline*}
The sum of the first two terms on the right vanishes, as a consequence of the PDE; therefore
$$
U_k = \varepsilon^2 \bigl[ (q(0)-r(0))^2 - (q(1)-r(1))^2 \bigr] D_k + O(\varepsilon^3).
$$
\medskip

\noindent \textbf{Case 4}:  $m_k=1, b_k=-1$.\\
This case is similar to the last one, since the market is again doing just half of the two-step
cycle $0-1-0$. We have
\begin{align*}
U_k &= \varepsilon^2 (u_t + (q(1)-r(1))^2 D_k - 2 (-1) u_\eta f_{k,1}^\#  + O(\varepsilon^3)\\
& = \varepsilon^2 L(t_k,1,-1, \xi_k, \eta_k, f_{k,1}^\#) + O(\varepsilon^3)
\end{align*}
and the value of $L$ this time is
\begin{multline*}
u_t + (q(1)-r(1))^2 D_k + 2 u_\eta f_{k,1}^\#  =
u_t + (q(1)-r(1))^2 D_k + \frac{1}{2} \bigl[ (q(1)-r(1))^2 - (q(0)-r(0))^2 \bigr] D_k\\
 = u_t + \frac{1}{2} \bigl[ (q(0)-r(0))^2 + (q(1)-r(1))^2 \bigr] D_k +
\bigl[ (q(1)-r(1))^2 - (q(0)-r(0))^2 \bigr] D_k.
\end{multline*}
The sum of the first two terms on the right vanishes as before, so
$$
U_k = \varepsilon^2 \bigl[ (q(1)-r(1))^2 - (q(0)-r(0))^2 \bigr] D_k + O(\varepsilon^3).
$$
(Notice that the $\varepsilon^2$ terms from cases 3 and 4 sum to zero. This had to be so, since
we know from Section \ref{sec2-4} that the average value of $L$ over the cycle $0-1-0$ is
$u_t + C_1^\# D_k$, which equals $0$.)
\medskip

We want to show that $|U_0 + \ldots U_{N-1}| \leq C [(T-t) + \varepsilon] \varepsilon$.
The $O(\varepsilon^3)$ terms in the estimates for $U_k$
are consistent with this: since there are $N = (T-t)/\varepsilon^2$ of them, they accumulate
at worst to an error that's bounded by $C (T-t) \varepsilon$. So our task is to control the sum
of the order-$\varepsilon^2$ terms from Cases 3 and 4. If the value of $D_k$ didn't change with $k$
this would be easy; but alas, it does change with $k$, since $D_k$ is the value of \eqref{D-reminder}
evaluated at $(t_k,\xi_k,\eta_k)$.

Consider the walk on the $d=1$ graph that's associated with our sequence. Suppose transitions from
$1$ to $0$ happen at steps $i_1< i_2<\dots$ and transitions from $0$ to $1$ happen at steps
$j_1< j_2<\dots$. These transitions must -- by their essential character -- be ordered. Making a choice which comes first, we may assume (without loss of generality) that $i_1 < j_1$. Depending upon which
type of transition comes last, the full list of transtions between $0$ and $1$ is either of the form
\begin{equation} \label{same-number}
i_1 < j_1 < i_2 < j_2 < \dots i_K < j_K
\end{equation}
or
\begin{equation} \label{different-number}
i_1 < j_1 < i_2 < j_2 < \dots i_K < j_K < i_{K+1}.
\end{equation}
Either way, we have
\begin{equation} \label{accumulation}
\sum_{n=1}^{K} (j_n-i_n) \leq N .
\end{equation}

If \eqref{same-number} holds, then the sum of the $\varepsilon^2$ terms (in our estimates for $U_k$)
is precisely
\begin{equation} \label{sum-of-Uk}
\varepsilon^2 \bigl[(q(1)-r(1))^2-(q(0)-r(0))^2 \bigr] \sum_{n=1}^{K} (D_{i_n} -D_{j_n}) .
\end{equation}
Now, $D_{i_n}$ and $D_{j_n}$ represent the same function \eqref{D-reminder} evaluated at different
points in space-time, which differ in time by $(j_n - i_n) \varepsilon^2$ and in space by at most
a constant times $(j_n - i_n) \varepsilon$. Our hypotheses on $u$ assure that the expression being
evaluated has uniformly bounded derivatives with respect to $t$, $\xi$, and $\eta$, so it
is globally Lipschitz continuous. We conclude that
\begin{equation} \label{wandering-of-D}
|D_{i_n} - D_{j_n}| \leq C (j_n-i_n) \varepsilon.
\end{equation}
Combining this with \eqref{accumulation} gives
$$
\varepsilon^2 \sum_{n=1}^{K} |D_{i_n} -D_{j_n}| \leq C N \varepsilon^3 = C (T-t) \varepsilon.
$$
So by \eqref{sum-of-Uk}, the $\varepsilon^2$ terms in our estimates for $U_k$
sum to at most a constant times $(T-t) \varepsilon$.

If the situation is \eqref{different-number} rather than \eqref{same-number}, the
the same argument applies, but the $\varepsilon^2$ term in $U_{i_{K+1}}$ must be handled separately.
In this case the $\varepsilon^2$ terms in our estimates for $U_k$ sum to at most a constant times
$(T-t) \varepsilon + \varepsilon^2$.

In either situation, these estimates establish \eqref{nearly-constant-d=1} , completing
the proof of the theorem.
\end{proof}

\subsection{The lower bound, when $\varphi$ is regular} \label{sec3-2}
Our lower bound shows that Theorem \ref{1up} is asymptotically sharp. Its proof is largely parallel
to that of the upper bound, except that this time we use a good strategy for the market. To describe
the strategy, recall that our heuristic discussion used Taylor expansion to estimate the
increments of $u$. As we observed just after \eqref{TaylorMinMax}, the investor must make the ``first-order term'' $\varepsilon A$ nearly vanish, since otherwise this term dominates and the
market can choose $b$ to make it positive; this determined $f_m^*$, the
leading-order term in the investor's strategy. A more subtle analysis led us to guess the
optimal next-order term, and the resulting strategy $f_m = f_m^* + \varepsilon f_m^\#$
was at the heart of our upper bound.

Our strategy for the market reflects the two-step character of the heuristic discussion:
\medskip

\noindent {\bf Case 1}: If the investor's choice doesn't nearly zero out the ``first-order term,''
then the market chooses $b$ to make that term positive; more quantitatively,
\begin{equation} \label{market-strategy-case1}
\mbox{if the investor's choice $f$ has $|f-f_m^*| \geq \gamma \varepsilon$ then the market chooses $b$
so that $-b (f-f_m^*) \geq 0$.}
\end{equation}

\noindent {\bf Case 2:} When case 1 doesn't apply, it is convenient to express the
investor's choice $f$ as
$f=f_m^* + \varepsilon f_m^\# + \varepsilon X$ (this relation defines $X$). The investor is more
optimistic than our conjectured optimal strategy if $X>0$, and more pessimistic if $X<0$.
In the former case the market makes the stock go down, and in the latter case it makes the stock
go up -- in each case giving the investor an unwelcome surprise; more quantitatively:
\begin{equation} \label{market-strategy-case2}
\mbox{if the investor's choice $f$ has $|f-f_m^*| < \gamma \varepsilon$ then the market chooses $b$
so that $-bX \geq 0$.}
\end{equation}
Here $\gamma$ is a constant, which cannot be too small; specifically, it must satisfy
\eqref{gamma-condition1} and \eqref{gamma-condition2} below. This strategy lies at
the heart of the following lower bound.
\medskip

\begin{theorem} \label{1low}
Suppose $d=1$, let $u^\varepsilon$ be defined by the dynamic programming principle
\eqref{DPP} with final-time condition $\varphi$, let $u$ solve the PDE \eqref{PDEsharp1}
with the same final-time condition, and assume $u$ satisfies
\eqref{require-u-smooth}--\eqref{require-ineq-btwn-xi-and-eta-derivs}. Then there is
a constant $C$ (independent of $\varepsilon$, $t$, and $T$) such that
\begin{equation} \label{lower-bound-statement-d=1}
u^\varepsilon(t, m, \xi, \eta)\geq u(t,\xi,\eta)- C[(T-t) + \varepsilon]\varepsilon
\quad \mbox{for $t < T$, $\xi \in \mathbb{R}$, $\eta \in \mathbb{R}$, and $m\in\{0,1\}$}
\end{equation}
whenever $\varepsilon$ is small enough and $t$ is such that $N=(T-t)/\varepsilon^2$ is an integer.
\end{theorem}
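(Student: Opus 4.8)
The plan is to mirror the proof of Theorem~\ref{1up}, interchanging the roles of the two players: we now fix the market's two-case strategy \eqref{market-strategy-case1}--\eqref{market-strategy-case2} and show that, whatever the investor does, $u^\varepsilon$ decreases along the resulting trajectory down to its final-time value $\varphi$, while $u$ is nearly constant along the same trajectory. Concretely, to estimate $u^\varepsilon(t_0,m_0,\xi_0,\eta_0)$, let $f_0\in[-1,1]$ attain the minimum in the dynamic programming principle \eqref{DPP-compact} at $(t_0,m_0,\xi_0,\eta_0)$, let $b_0$ be the market's response to $f_0$ dictated by \eqref{market-strategy-case1}--\eqref{market-strategy-case2}, and let $(t_1,m_1,\xi_1,\eta_1)$ be the point reached via $f_0,b_0$. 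Since $\max_{b=\pm1}(\cdots)$ is at least its value at this particular $b_0$, we get $u^\varepsilon(t_0,m_0,\xi_0,\eta_0)\ge u^\varepsilon(t_1,m_1,\xi_1,\eta_1)$. Iterating step by step produces a sequence with
\[
u^\varepsilon(t_0,m_0,\xi_0,\eta_0)\ \ge\ \cdots\ \ge\ u^\varepsilon(t_N,m_N,\xi_N,\eta_N)\ =\ \varphi(\xi_N,\eta_N)\ =\ u(T,\xi_N,\eta_N),
\]
where $t_N=T$. As in Theorem~\ref{1up} it then suffices to prove the near-constancy $|u(t_0,\xi_0,\eta_0)-u(t_N,\xi_N,\eta_N)|\le C[(T-t)+\varepsilon]\varepsilon$, which combined with the chain of inequalities yields \eqref{lower-bound-statement-d=1}.

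Write $U_k=u(t_{k+1},\xi_{k+1},\eta_{k+1})-u(t_k,\xi_k,\eta_k)$. Taylor expanding $u$ about $(t_k,\xi_k,\eta_k)$ as in \eqref{TaylorMinMax} gives $U_k=\varepsilon A_k+\varepsilon^2 B_k+O(\varepsilon^3)$ with $A_k,B_k$ as in \eqref{defn-of-A}--\eqref{defn-of-B} (all derivatives of $u$ evaluated at $(t_k,\xi_k,\eta_k)$), at the investor's \emph{actual} bid $f_k$ and the market's resulting choice $b_k$; and \eqref{f^*m} yields the identity $\varepsilon A_k=-2\varepsilon b_k\,(f_k-f^*_{m_k})\,u_\eta$. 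I would then split according to the market's strategy. In Case~2 ($|f_k-f^*_{m_k}|<\gamma\varepsilon$), writing $f_k=f^*_{m_k}+\varepsilon f^\#_{m_k}+\varepsilon X_k$ with $f^\#_{m_k}$ as in \eqref{f-when-d=1}, the market's choice $-b_kX_k\ge0$ renders the $X_k$-part of $\varepsilon A_k$ nonnegative, while the $O(\varepsilon)$ error from replacing $f_k$ by $f^*_{m_k}$ in $B_k$ (cf.\ \eqref{value-of-B}) only affects the $O(\varepsilon^3)$ term; hence $U_k\ge\varepsilon^2 L(t_k,m_k,b_k,\xi_k,\eta_k,f^\#_{m_k})+O(\varepsilon^3)$. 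By the computation of Section~\ref{sec2-4} and the PDE \eqref{PDEsharp1}, this $L$ equals $0$ for the self-loops $(m_k,b_k)\in\{(0,-1),(1,1)\}$ and $\mp[(q(1)-r(1))^2-(q(0)-r(0))^2]D_k$ for the two halves $(0,1)$, $(1,-1)$ of the cycle $0\!-\!1\!-\!0$ --- exactly the $\varepsilon^2$-coefficients that appeared in Cases~1--4 of the proof of Theorem~\ref{1up}. In Case~1 ($|f_k-f^*_{m_k}|\ge\gamma\varepsilon$), the market's choice $-b_k(f_k-f^*_{m_k})\ge0$ makes $\varepsilon A_k=2\varepsilon|f_k-f^*_{m_k}|u_\eta\ge 2c\gamma\varepsilon^2$ (using \eqref{require-eta-deriv-pos}), while $|B_k|$ is uniformly bounded for $|f_k|\le1$ by \eqref{require-u-smooth}; so $U_k\ge(2c\gamma-C_1)\varepsilon^2+O(\varepsilon^3)$, and choosing $\gamma$ large enough that $2c\gamma-C_1$ exceeds the uniform bound on $|[(q(1)-r(1))^2-(q(0)-r(0))^2]D_k|$ (this is where the lower bounds on $\gamma$ enter) makes $U_k$ at least as big as that same ``transition coefficient'' times $\varepsilon^2$, plus $O(\varepsilon^3)$. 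In all cases, then, $U_k\ge\varepsilon^2\Lambda_k+O(\varepsilon^3)$, where $\Lambda_k$ depends only on the transition type at step $k$ and coincides with the $\varepsilon^2$-coefficient of the corresponding case in Theorem~\ref{1up}.

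Summing and using $N=(T-t)/\varepsilon^2$ gives $\sum_{k=0}^{N-1}U_k\ge\varepsilon^2\sum_k\Lambda_k+O((T-t)\varepsilon)$. Only transition steps contribute to $\sum_k\Lambda_k$, and --- exactly as for $d=1$ in the proof of Theorem~\ref{1up} --- transitions $0\to1$ and $1\to0$ on the underlying graph alternate, so the self-cancellation in \eqref{sum-of-Uk}, the Lipschitz bound \eqref{wandering-of-D}, and \eqref{accumulation} give $\varepsilon^2|\sum_k\Lambda_k|\le C[(T-t)+\varepsilon]\varepsilon$. Hence $\sum_k U_k\ge-C[(T-t)+\varepsilon]\varepsilon$, which is the desired near-constancy of $u$.

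The main obstacle, compared with the upper bound, is that $f_k$ is not ours to prescribe --- it is whatever minimizes the dynamic programming expression --- so we obtain only the \emph{inequality} $U_k\ge\varepsilon^2\Lambda_k+O(\varepsilon^3)$ rather than the equality available in Theorem~\ref{1up}. Case~1 of the market's strategy (with $\gamma$ taken large) is precisely the device that salvages this inequality when the investor bids far from $f^*_{m_k}$, and Case~2 must be arranged so that bidding away from the conjectured correction $f^\#_{m_k}$ can only benefit the market. Verifying that these two mechanisms together produce the clean bound $U_k\ge\varepsilon^2\Lambda_k+O(\varepsilon^3)$, with constants uniform in $\varepsilon$, $t$, $T$, and in the investor's play, is the technical heart of the matter; once it is in hand the summation step is identical to that of Theorem~\ref{1up}.
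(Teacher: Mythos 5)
Your proposal is correct and follows essentially the same route as the paper's own proof: fix the market's two-case strategy (threshold $\gamma\varepsilon$ on $|f-f_m^*|$), extract a monotone decreasing chain from the min-max, establish the one-sided Taylor inequality $U_k\ge\varepsilon^2 L(t_k,m_k,b_k,\xi_k,\eta_k,f^\#_{m_k})+O(\varepsilon^3)$ case by case, and then recycle the cycle/alternation summation argument from Theorem~\ref{1up}. The only cosmetic difference is in Case~1, where the paper phrases the requirement on $\gamma$ through the two local conditions $\gamma u_\eta\ge |(q(1)-r(1))^2-(q(0)-r(0))^2||D|$ and $|B|\le\gamma u_\eta$, whereas you use the global constants $c$ (lower bound on $u_\eta$) and $C_1$ (sup of $|B|$) to pick $\gamma$ once and for all; these are equivalent given the standing hypotheses \eqref{require-u-smooth}--\eqref{require-ineq-btwn-xi-and-eta-derivs}.
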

\begin{proof}
For the upper bound, we estimated $u^\varepsilon(t_0,m_0,\xi_0,\eta_0)$ by choosing a sequence
$(t_k,m_k, \xi_k, \eta_k)$ along which $u^\varepsilon$ was increasing and $u$ was nearly
constant. For the lower bound, we shall use a different sequence along which $u^\varepsilon$ is
decreasing
\begin{equation} \label{chain10-bis}
u^\varepsilon(t_0, m_0, \xi_0, \eta_0) \geq  u^\varepsilon(t_1, m_1, \xi_1, \eta_1)\geq
\dots \geq u^\varepsilon(t_N, m_N, \xi_N, \eta_N)
\end{equation}
with $t_N=T$, so that
\begin{equation} \label{chain11-bis}
u^\varepsilon(t_N, m_N, \xi_N, \eta_N)=\varphi(\xi_N, \eta_N)=u(t_N, \xi_N, \eta_N).
\end{equation}
The sequence will be chosen so that
\begin{equation} \label{chain12-bis}
u(t_N,\xi_N, \eta_N) - u(t_0, \xi_0,\eta_0) \geq - C[ (T-t) + \varepsilon ] \varepsilon.
\end{equation}
These estimates lead immediately to \eqref{lower-bound-statement-d=1}.

We shall identify the sequence by explaining the choice of $(t_1,m_1,\xi_1,\eta_1)$
(the rest of the sequence is then determined similarly, step by step). Recall our compact
form of the dynamic programming principle, equation \eqref{DPP-compact}. Let $f$ be
the investor's optimal choice at $(t_0,m_0,\xi_0,\eta_0)$; then the dynamic programming principle
becomes
\begin{equation}
u^\varepsilon(t_0, m_0, \xi_0, \eta_0) = \max_{b=\pm 1}
u^\varepsilon(t_0+\varepsilon^2, (m_{0})_b, \xi_0 +\varepsilon b v^1, \eta_0 + \varepsilon b v^2)
\end{equation}
where $v^1$ and $v^2$ are defined by \eqref{defn-v}. Evidently, either choice $b=1$ or $b=-1$
gives a point $t_1=t_0 + \varepsilon^2$, $m_1=(m_{0})_b$, $\xi_1= \xi_0 + \varepsilon v^1$,
$\eta_1 = \eta_0 + \varepsilon v^2$ for which the desired inequality
$u^\varepsilon(t_0, m_0, \xi_0,\eta_0) \geq u^\varepsilon(t_1, m_1, \xi_1,\eta_1)$ holds.
We shall show that if $b$ is chosen according to the proposed strategy
\eqref{market-strategy-case1}--\eqref{market-strategy-case2} then we obtain the desired
control on $u$.

We suppose henceforth that the sequence $(t_k, m_k, \xi_k, \eta_k)$ has been chosen by applying
the preceding argument inductively (using the proposed market strategy to determine the value of
$b_k$ at each step). Properties \eqref{chain10-bis} and \eqref{chain11-bis} are
immediately clear. Our plan for demonstrating \eqref{chain12-bis} is to show that the increments
$ U_k = u(t_{k+1}, \xi_{k+1},\eta_{k+1})-u(t_{k}, \xi_{k}, \eta_{k})$ satisfy
\begin{equation} \label{like-ub}
U_k \geq \varepsilon^2 \bigl[ u_t + (q(m_k)-r(m_k))^2 D_k - 2 b_k u_\eta f_{t_k,m_k}^\# \bigr] + O(\varepsilon^3).
\end{equation}
Crucially: the RHS of \eqref{like-ub} is the expression we found for the increment in
our proof of the upper bound.

Given \eqref{like-ub}, the rest is easy: the desired control on $u$, \eqref{chain12-bis}, follows
immediately from the argument we used for the upper bound (applied, of course, to the walk
determined by our sequence $(t_k, m_k, \xi_k, \eta_k)$).

To prepare for the proof of \eqref{like-ub}, we state now the conditions we need for the
constant $\gamma$ (recall that Case 1 of the market's strategy applies when
$|f - f_m^*| \leq \gamma \varepsilon$). Remember (from Cases 1--4 in the proof of the upper bound)
that the $\varepsilon^2$ term on the RHS of \eqref{like-ub} is either $0$ or else
\begin{equation} \label{or-else}
\pm \varepsilon^2 \bigl[(q(1)-r(1))^2-(q(0)-r(0))^2 \bigr] D_k.
\end{equation}
The expression in brackets is a fixed constant. Since we have a uniform bound on $|D_k|$ and a positive
lower bound on $u_\eta$ (by our hypotheses on $u$), by taking $\gamma$ sufficiently large we can
have
\begin{equation} \label{gamma-condition1}
\gamma \geq \bigl| (q(1)-r(1))^2-(q(0)-r(0))^2 \bigr|
\max_{\xi, \eta \in \mathbb{R}^2,~ t<T} \frac{|D|}{u_\eta}
\end{equation}
where $D$ is defined as usual by \eqref{D-reminder}. Next, recall from our heuristic discussion
that when estimating the RHS of our dynamic programming principle \eqref{DPP-compact} by Taylor expansion, the second-order-term is
$$
\varepsilon^2 \bigl[ u_t + \frac{1}{2} \langle D^2u \cdot v_k, v_k \rangle \bigr]
$$
where $v_k$ is defined by \eqref{defn-v}. Since the investor must choose $|f|\leq 1$, we have
a uniform bound $|v_k|\leq M$. Since we are assuming uniform bounds for $u_t$ and $D^2 u$ (and recalling
that $u_\eta$ is bounded away from $0$) by taking $\gamma$ sufficiently large we can have
\begin{equation} \label{gamma-condition2}
\gamma \geq
\max_{\substack{\xi, \eta \in \mathbb{R}^2,~ t<T\\|v|\leq M}}
\frac{\bigl| u_t + \frac{1}{2} \langle D^2u \cdot v, v \rangle \bigr|}{u_\eta}.
\end{equation}
These are the only conditions we place on $\gamma$.

We now prove \eqref{like-ub} for the market's Case 1. At step $k$ (when the current state is
$m_k$) this means the investor's choice $f_k$ has $|f_k-f_{k,m_k}^*| \geq \gamma \varepsilon$.
(Here we write $f_k$ and $f_{k,m_k}^*$ rather than $f_{t_k}$ and $f_{t_k,m_k}^*$ to simplify the notation.) Proceeding as we
did in the heuristic discussion, the increment is
\begin{equation} \label{markets-case1-step1}
U_k = \varepsilon A + \varepsilon^2 B + O( \varepsilon^3)
\end{equation}
with
$$
A= b_k (v^1 u_\xi + v^2 u_\eta) =
b_k \bigl( [q(m_k)-r(m_k)]u_{\xi}+[q(m_k)+r(m_k)-2f_k]u_{\eta} \bigr)
$$
and
$$
B = u_t + \frac{1}{2} \langle D^2u \cdot v_k, v_k \rangle .
$$
Since $A$ is an affine function of $f_k$ and it vanishes when $f_k=f_{k,m_k}^*$, the
expression for $A$ simplifies to
\begin{equation} \label{markets-case1-step2}
A=-2 b_k u_\eta (f_k-f_{k,m_k}^*).
\end{equation}
Since we are in Case 1, the market chooses $b_k$ so that $A$ is positive, and we have
$$
\varepsilon A \geq \varepsilon^2 [ 2 \gamma u_\eta].
$$
Since $B \leq \gamma u_\eta$ by \eqref{gamma-condition2}, we have
$$
\varepsilon A + \varepsilon^2 B \geq \varepsilon^2 [\gamma u_\eta].
$$
Making use now of \eqref{gamma-condition1}, we conclude that
$$
\varepsilon^2 [\gamma u_\eta ] \geq
\varepsilon^2 \bigl|(q(1)-r(1))^2-(q(0)-r(0))^2 \bigr| D_k;
$$
in view of \eqref{or-else}, this implies the desired estimate \eqref{like-ub}.

Turning now to the market's Case 2, we start again with \eqref{markets-case1-step1}. The value
of $A$ is again given by \eqref{markets-case1-step2}. Writing
$f_k - f_{k,m_k}^* = \varepsilon f_{k,m_k}^\# + \varepsilon X$ and choosing $b_k$ by
the market's strategy \eqref{market-strategy-case2}, we have
$$
\varepsilon A \geq \varepsilon^2 [- 2 b_k u_\eta f_{k,m_k}^\#] .
$$
Since $|f_k - f_{k,m_k}^*| \leq \gamma \varepsilon$, the ``second-order term''
$B = u_t + \frac{1}{2} \langle D^2u \cdot v_k, v_k \rangle$ can be estimated
as we did in the heuristic calculation: replacing $f_k$ by $f_{k,m_k}^*$ in the
expression for $v_k$ makes an error of order $\varepsilon$, and this leads to
$$
\varepsilon^2 B = \varepsilon^2 [u_t + (q(m)-r(m))^2 D_k] + O(\varepsilon^3).
$$
Adding, we conclude that
\begin{align*}
U_k &= \varepsilon A + \varepsilon^2 B + O(\varepsilon^3) \\
 & \geq \varepsilon^2 \bigl[u_t + (q(m)-r(m))^2 D_k -  2 b_k u_\eta f_{k,m_k}^\# \bigr] +
 O(\varepsilon^3),
\end{align*}
which is precisely \eqref{like-ub}.
The proof of the theorem is now complete.
\end{proof}

\subsection{The classic case, when $\varphi= (\eta + |\xi|)/2$} \label{sec3-3}
The classic goal of minimizing regret with respect to the best-performing expert
corresponds, as we explained in Section \ref{sec2-3}, to using final-time data
$\varphi = (\eta + |\xi|)/2$. Since this $\varphi$ is not $C^3$, the solution of the
associated PDE does not admit a uniform $C^3$ bound up to $t=T$, so
Theorems \ref{1up} and \ref{1low} do not include this case. However we can obtain
similar estimates (with an error estimate of order $\varepsilon |\log \varepsilon|$ rather
than $\varepsilon$) by repeating the proofs of those theorems with proper attention
to the error terms. A result of this type was proved by Zhu for two constant experts \cite{Zhu};
the proof of the following theorem uses essentially the same arguments.

\begin{theorem} \label{1classic}
Suppose $d=1$ and consider the classic final-time data $\varphi = (\eta + |\xi|)/2$. We consider, as usual,
the function $u^\varepsilon(t,m,\xi,\eta)$ defined by the dynamic programming principle \eqref{DPP} with final-time data $\varphi$, and the function $u(t,\xi,\eta)$ defined by the PDE \eqref{PDEsharp1} with
final-time data $\varphi$. There is a constant $C$ (independent of $\varepsilon$, $t$, and $T$) such that
\begin{equation} \label{estimate-classic-d=1}
\bigl| u^\varepsilon(t, m, \xi, \eta) - u(t,\xi,\eta) \bigr| \leq
C \varepsilon |\log \varepsilon|
\quad \mbox{for $t < T$, $\xi \in \mathbb{R}$, $\eta \in \mathbb{R}$, and $m\in\{0,1\}$}
\end{equation}
whenever $\varepsilon$ is small enough and $t$ is such that $N=(T-t)/\varepsilon^2$ is an integer.
\end{theorem}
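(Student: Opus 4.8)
The approach is to re-run the proofs of Theorems~\ref{1up} and~\ref{1low}, but to stop the comparison chain at a time $t_*=T-K\varepsilon^2$ slightly before $T$ and to handle the last $K$ steps by a crude Lipschitz estimate. The reason the earlier proofs do not apply as they stand is that, for $\varphi=(\eta+|\xi|)/2$, the PDE solution is explicit, $u(t,\xi,\eta)=\tfrac12\eta+\overline u(t,\xi)$ with $\overline u$ solving the backward heat equation $\overline u_t+\tfrac12C^\#_1\overline u_{\xi\xi}=0$, $\overline u(T,\cdot)=\tfrac12|\cdot|$. Standard heat-kernel estimates give, for $t<T$,
\begin{equation}\label{classic-bounds}
|\partial_\xi^j u|\le C(T-t)^{(1-j)/2}\ (j\ge1),\quad |u_t|\le C(T-t)^{-1/2},\quad |\partial_t\partial_\xi u|\le C(T-t)^{-1},\quad |u_{tt}|\le C(T-t)^{-3/2},
\end{equation}
while $u_\eta=\tfrac12$, $|u_\xi|\le\tfrac12$, and $u_{\xi\eta}=u_{\eta\eta}=0$. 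Thus \eqref{require-u-smooth}--\eqref{require-ineq-btwn-xi-and-eta-derivs} hold on every interval $[t_0,t_*]$ with $t_*<T$, but with constants that blow up as $t_*\uparrow T$, so the per-step $O(\varepsilon^3)$ errors in the proofs of Theorems~\ref{1up}--\ref{1low} are no longer uniform.

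First I would pick $K$, a large constant depending on $q,r$ (but not on $\varepsilon$), chosen so that $\varepsilon|f^\#_{t,m}|\le C\varepsilon(T-t)^{-1/2}\le C/\sqrt K$ is smaller than the gap $1-\max_m\max(|q(m)|,|r(m)|)$ whenever $T-t\ge K\varepsilon^2$; this keeps the investor's strategy $f=f^*+\varepsilon f^\#$ of \eqref{f-when-d=1} admissible on $[t_0,t_*]$. (For the lower bound, the market's strategy \eqref{market-strategy-case1}--\eqref{market-strategy-case2} needs only a minor adjustment: since $u_{\xi\eta}=u_{\eta\eta}=0$ here, $\langle D^2u\,v,v\rangle=u_{\xi\xi}(q(m)-r(m))^2$ is independent of $f$, so Case~2 alone works for every investor choice and the threshold $\gamma$ plays no role.) I would then build the monotone chain $(t_k,m_k,\xi_k,\eta_k)$ exactly as in the proof of Theorem~\ref{1up} (resp.~\ref{1low}), but only for the $N'=N-K$ steps carrying $t_0$ to $t_*$. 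Taylor expansion gives $U_k:=u(t_{k+1},\xi_{k+1},\eta_{k+1})-u(t_k,\xi_k,\eta_k)=\varepsilon^2 L(t_k,m_k,b_k,\xi_k,\eta_k,f^\#_k)+R_k$; because $u_{\xi\eta}=u_{\eta\eta}=0$ there is no ``$B$-at-$f^*$-versus-$f$'' error, the only surviving remainder terms come from $u_{\xi\xi\xi}$, $\partial_t\partial_\xi u$, and $u_{tt}$, and $\Delta\xi,\Delta\eta=O(\varepsilon)$, so by \eqref{classic-bounds} one gets $|R_k|\le C\varepsilon^3(T-t_k)^{-1}+C\varepsilon^4(T-t_k)^{-3/2}$. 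With the indifference value of $f^\#$, $\varepsilon^2L$ vanishes on the cyclic transitions and equals $\pm\varepsilon^2[(q(1)-r(1))^2-(q(0)-r(0))^2]D_k$ on the half-cycle transitions, which pair off as in the proof of Theorem~\ref{1up}; since $D=\tfrac12\overline u_{\xi\xi}$ satisfies $|D_{k+1}-D_k|\le C\varepsilon(T-t_k)^{-1}+C\varepsilon^2(T-t_k)^{-3/2}$, summing over the disjoint index-intervals between paired transitions and then over all $k<N'$, and writing $T-t_k=(N-k)\varepsilon^2$ with $N\le C\varepsilon^{-2}$, gives
\begin{equation}\label{classic-sums}
\varepsilon^2\Bigl|\textstyle\sum_n(D_{i_n}-D_{j_n})\Bigr|+\sum_{k<N'}|R_k|\le C\varepsilon^3\sum_{k<N'}(T-t_k)^{-1}+C\varepsilon^4\sum_{k<N'}(T-t_k)^{-3/2}\le C\varepsilon\sum_{j=K}^{N}\tfrac1j+C\varepsilon\le C\varepsilon|\log\varepsilon|.
\end{equation}
Hence $|u(t_*,\xi_*,\eta_*)-u(t_0,\xi_0,\eta_0)|\le C\varepsilon|\log\varepsilon|$, while monotonicity of $u^\varepsilon$ along the chain gives $u^\varepsilon(t_0,m_0,\xi_0,\eta_0)\le u^\varepsilon(t_*,m_*,\xi_*,\eta_*)$ for the upper bound (and the reverse inequality for the lower bound).

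For the short final interval, $u^\varepsilon(t_*,m,\xi,\eta)$ is the value of just $K$ rounds of the dynamic programming principle \eqref{DPP} starting from $\varphi$, over which $(\xi,\eta)$ moves by at most $CK\varepsilon$; since $\varphi$ is Lipschitz, $|u^\varepsilon(t_*,m,\xi,\eta)-\varphi(\xi,\eta)|\le CK\varepsilon=C\varepsilon$ uniformly in $m$. Moreover $|u(t_*,\xi,\eta)-\varphi(\xi,\eta)|=|\overline u(t_*,\xi)-\tfrac12|\xi||\le\tfrac12\,\mathbb{E}|Z|$ with $Z$ Gaussian of mean $0$ and variance $C^\#_1(T-t_*)$, hence $\le C\sqrt{T-t_*}=C\sqrt K\,\varepsilon$. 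Combining these with the chain estimate, the upper bound reads $u^\varepsilon(t_0,\ldots)\le u^\varepsilon(t_*,m_*,\xi_*,\eta_*)\le\varphi(\xi_*,\eta_*)+C\varepsilon\le u(t_*,\xi_*,\eta_*)+C\varepsilon\le u(t_0,\xi_0,\eta_0)+C\varepsilon|\log\varepsilon|$, and the lower bound follows symmetrically; together these give \eqref{estimate-classic-d=1}.

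I expect the main obstacle to be the careful accounting of every error term against the blow-up \eqref{classic-bounds}: one must check that each source — the Taylor remainder $R_k$, the ``wandering of $D$'' between paired half-cycle transitions, the admissibility margin for the investor's $f$, and (for the lower bound) the market's decision rule — is dominated by the harmonic sum $\varepsilon\sum_{j\le N}j^{-1}\sim\varepsilon|\log\varepsilon|$ or by the convergent tail $\varepsilon\sum j^{-3/2}\sim\varepsilon$, and in particular that nothing accumulates a genuine negative power of $\varepsilon$. What makes this manageable is the algebra peculiar to $d=1$ and $\varphi=(\eta+|\xi|)/2$ — namely $u_{\xi\eta}=u_{\eta\eta}=0$, so $D=\tfrac12\overline u_{\xi\xi}$ and the second-order Taylor term is exactly $\varepsilon^2[u_t+(q(m)-r(m))^2D]$, independent of $f$ — together with the cutoff $t_*=T-K\varepsilon^2$ for an absolute constant $K$, which is precisely what converts the otherwise-divergent tail of $\sum_k(T-t_k)^{-1}$ into the stated $\varepsilon|\log\varepsilon|$.
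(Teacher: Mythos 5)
Your proof is correct, and it takes a genuinely different route from the paper's. The paper replaces $u$ by the time-shifted surrogate $\tilde u(t,\cdot)=u(t-\delta,\cdot)$ (smooth up to $t=T$, with derivatives of size $\delta^{-a}$), runs the \emph{full} $N$-step chain against $\tilde u$, tracks each error term through the Riemann-sum estimate \eqref{riemann-sum-estimate}, and finally sets $\delta=2\varepsilon^2$. You instead run the chain against the \emph{original} $u$ but only up to $t_*=T-K\varepsilon^2$, using the blow-up estimates \eqref{estimates-for-u} directly, and you close the final gap of $K$ steps with a Lipschitz bound on $u^\varepsilon$ (from the DPP) and a Gaussian $\mathbb{E}|Z|$ bound on $u$. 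These two devices — a time shift of size $O(\varepsilon^2)$ and a time cutoff of size $O(\varepsilon^2)$ — are functionally interchangeable, and both produce the same harmonic tally $\varepsilon\sum_{j\le N}j^{-1}\sim\varepsilon|\log\varepsilon|$.

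You make two further simplifications worth noting. First, you exploit $u=\tfrac12\eta+\overline u(t,\xi)$ (so $u_{\xi\eta}=u_{\eta\eta}=u_{t\eta}=0$) to observe that $\tfrac12\langle D^2u\,v,v\rangle=\tfrac12 u_{\xi\xi}(q-r)^2$ is \emph{independent of $f$}; this kills the paper's type-(ii) error and also renders the market's Case~1 (and the threshold $\gamma$) superfluous — the Case~2 rule $-bX\ge0$ already yields \eqref{like-ub} for \emph{every} investor choice, since the Taylor remainder depends on $f$ only through $\Delta\eta$ and all $\eta$-derivatives of order $\ge 2$ vanish. The paper, by contrast, keeps both of the market's cases and simply rescales $\gamma\sim\delta^{-1/2}$. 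Second, you choose $K$ large specifically to keep $\varepsilon|f^\#_{t,m}|$ below the admissibility margin $1-\max_m\max(|q(m)|,|r(m)|)$. This deserves emphasis: with the paper's choice $\delta=2\varepsilon^2$, the bound $|f^\#|\le C\delta^{-1/2}$ only gives $\varepsilon|f^\#|\le C/\sqrt2=O(1)$, and the paper does not explicitly re-verify $|f^*+\varepsilon f^\#|\le1$ in the classic setting. Your large-$K$ cutoff closes that gap cleanly (as the paper could have done by taking $\delta=K\varepsilon^2$ with $K$ large — the factor is absorbed into $|\log\delta|\le C|\log\varepsilon|$, so it is a cosmetic fix, but your version is the more careful of the two).

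One small bookkeeping item that your proposal handles correctly but should not be glossed over in a final write-up: when the transition count is unbalanced (case \eqref{different-number}), the lone unmatched term is $\varepsilon^2 D_{i_{K+1}}\le C\varepsilon^2(K\varepsilon^2)^{-1/2}=C\varepsilon/\sqrt K$, which is absorbed into the $O(\varepsilon)$ part of \eqref{classic-sums}. Likewise, the higher-order Taylor terms you drop (e.g.\ $\varepsilon^5 u_{tt\xi}$, $\varepsilon^6 u_{ttt}$) are indeed dominated, once $T-t_k\ge K\varepsilon^2$, by the two terms you keep — but that inequality is what justifies it, so it is worth saying so explicitly.
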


\begin{proof}
As we observed in Section \ref{sec2-5}, the PDE simplifies dramatically in this case, and the solution
is $u(t,\xi,\eta) = \frac{1}{2} \eta + \overline{u}(t,\xi)$ where $\overline{u}$ solves
the linear heat equation $\overline{u}_t + \frac{1}{2}C_1^\# \overline{u}_{\xi \xi} = 0$ for $t<T$ with
$\overline{u} = \frac{1}{2} |\xi|$ at $t=T$. We note that the ``diffusion constant'' $\frac{1}{2}C_1^\#$ (which is
determined by \eqref{PDEsharp1b}) is nonzero, since we assumed in \eqref{q-and-r-distinct} that the experts were
distinct. Using the explicit solution formula for the linear heat equation we have
\begin{equation} \label{estimates-for-u}
|\partial^k_\xi \partial^l_t \overline{u}| \leq C_{k,l} (T-t)^{-(k+2l-1)/2}   \quad
\mbox{for $k \geq 0$ and $l \geq 0$ with $k + 2l \geq 1$}.
\end{equation}

Our overall strategy is to repeat the arguments used for Theorems \ref{1up} and \ref{1low}, with $u(t,\xi,\eta)$
replaced by the smooth function $\tilde{u}(t,\xi,\eta) = u(t-\delta, \xi, \eta)$
(which satisfies the same PDE, with final-time data that's a smooth approximation to
$\varphi$). We will choose the value of $\delta$ below to optimize
the resulting estimate.

To see what emerges from this strategy, we need to revisit the various
``error terms'' that entered the proofs of the theorems, namely:
\begin{enumerate}
\item[(i)] those incurred by estimating the increments of $u$ using Taylor expansion, i.e.
the $O(\varepsilon^3)$ term in the estimate
$$
u(t_{k+1}, \xi_{k+1},\eta_{k+1})-u(t_{k}, \xi_{k}, \eta_{k}) =
\varepsilon A + \varepsilon^2 B + O(\varepsilon^3)
$$
where $A$ and $B$ are given by \eqref{defn-of-A} and \eqref{defn-of-B} evaluated at step $k$;

\item[(ii)] those made by evaluating $B$ at $f_{t_k,m_k}^*$ rather than $f_{t_k,m_k}$, i.e.
the $O(\varepsilon^3)$ term in \eqref{value-of-B};

\item[(iii)] our estimate \eqref{wandering-of-D} for $|D_{i_n} - D_{j_n}|$; and

\item[(iv)] our estimate for the value of $D_{K+1}$, which was needed when analyzing a walk
with a different number of transitions from $0$ to $1$ vs $1$ to $0$ (i.e. when
\eqref{different-number} applies).
\end{enumerate}
We also need to monitor errors associated with
\begin{enumerate}
\item[(v)] the difference $|u(t-\delta,\xi,\eta) - u(t,\xi,\eta)|$, evaluated both at the time $t$ that
enters the estimate, and at the final time $T$ when $u(T,\xi,\eta)=\varphi (\xi,\eta)$.
\end{enumerate}
We begin with some easy estimates, which are too crude to give \eqref{estimate-classic-d=1}
but already give a nontrivial result. For any $\delta > 0$ we have uniform bounds
on the derivatives of $\tilde{u}(t,\xi,\eta) = u(t-\delta,\xi,\eta)$ for $t<T$, obtained by taking
$(T-t)=\delta$ in \eqref{estimates-for-u}. Applying this:
\begin{enumerate}
\item[(i)] The errors of type (i) are most easily estimated by writing the increment
$u(t_{k+1},\xi_{k+1},\eta_{k+1}) -  u(t_k,\xi_k,\eta_k)$ as
$$
[ u(t_{k+1},\xi_{k+1},\eta_{k+1}) -  u(t_{k+1},\xi_k,\eta_k) ] +
[ u(t_{k+1},\xi_k,\eta_k) -  u(t_k,\xi_k,\eta_k) ],
$$
then estimating each term separately using Taylor's theorem. If $\delta^{-1} \varepsilon^2 \leq 1/2$, the error
due to estimating a single increment by Taylor expansion is of order $\delta^{-1} \varepsilon^3$.
There are $N=(T-t)/\varepsilon^2$ such increments, so these accumulate to a term of order
$\delta^{-1} (T-t) \varepsilon$.

\item[(ii)] Since the spatial second derivatives of $\tilde{u}$ are at most of order $\delta^{-1/2}$, the errors
of type (ii) in the proof of the upper bound are at most $\delta^{-1/2} \varepsilon^3$ at each time step, accumulating to
$\delta^{-1/2} (T-t) \varepsilon$ after summation over all steps. The situation is slightly worse in the proof
of the lower bound, since in the market's Case 2 we only have $|f_{t,m} - f^*_{t,m}| \leq \gamma \varepsilon$ and
the size of $\gamma$ is controlled by \eqref{gamma-condition1}--\eqref{gamma-condition2}. These conditions
can be met with $\gamma \sim \delta^{-1/2}$; this leads to a type-(ii) error of order $\delta^{-1} \varepsilon^3$
at each time step, accumulating to $\delta^{-1} (T-t) \varepsilon$ after summation over all steps.

\item[(iii)] If $\delta^{-1} \varepsilon^2 \leq 1/2$ then the increment in the value of $D_k$ from one
step to the next is at most of order $\delta^{-1} \varepsilon$. This gives a substitute for \eqref{wandering-of-D}, namely
$|D_{i_n}-D_{j_n}| \leq C\delta^{-1}\varepsilon (j_n - i_n) $.

\item[(iv)] Since each $D_k$ is at most of order $\delta^{-1/2}$, a single term
$\varepsilon^2 D_{K+1}$ is estimated by $\delta^{-1/2} \varepsilon^2$.

\item[(v)] Since $u(t-\delta,\xi,\eta) - u(t,\xi,\eta) = \int_{t-\delta}^t u_s(s,\xi,\eta) \, ds$,
the value at $t=T$ is of order $\delta^{1/2}$, and the value at an earlier time $t$ is not larger.
\end{enumerate}
Arguing as for Theorems \ref{1up} and \ref{1low} and using these estimates, one finds that
$$
\bigl| u^\varepsilon(t, m, \xi, \eta) - u(t,\xi,\eta) \bigr| \leq
C \bigl( \delta^{-1} (T-t) \varepsilon + \delta^{-1/2} \varepsilon^2 + \delta^{1/2} \bigr)
$$
provided $\delta^{-1} \varepsilon^2 \leq 1/2$. To make the first and last terms similar it is
natural to take $\delta \sim (T-t)^{2/3} \varepsilon^{2/3}$. (This is consistent with $\delta^{-1} \varepsilon^2 \leq 1$
since $(T-t) \geq \varepsilon^2$.) The middle term is then of order $(T-t)^{-1/3} \varepsilon^{5/3}$, which is dominated
by the other two; thus we conclude that $|u^\varepsilon - u| \leq C (T-t)^{1/3} \varepsilon^{1/3} $.

The preceding estimates are too crude, because they estimate the errors at \emph{every} time step using
uniform bounds for the derivatives of $\tilde{u}$, which are overly pessimistic when $t \ll T$. To do better,
we should use a $k$-dependent estimate for the errors at time $t_k$. Since $\tilde{u}(t,\xi,\eta) = u(t-\delta,\xi,\eta)$,
the derivatives of $\tilde{u}$ are estimated at time $t_k$ by \eqref{estimates-for-u} with $T-t$
replaced by $T-t_k + \delta$. For errors that accumulate over many time steps we must sum
the resulting series. This is easily done by comparison to a suitable integral: as
$t_k$ runs from $t$ to $T$ in increments of $\varepsilon^2$, $\tau_k = T-t_k + \delta$ runs from
$T-t+\delta$ to $\delta$, and a sum of the form $\sum_{k=0}^N \tau_k^{-a} \varepsilon^2$ is essentially a Riemann sum for $\int_\delta^{T-t+\delta} s^{-a} \, ds$; as a result we have
\begin{equation} \label{riemann-sum-estimate}
\sum_{k = 0}^N \tau_k^{-a} \leq \left\{
\begin{array}{ll}
C_a \varepsilon^{-2} \delta^{1-a} & \mbox{when $a > 1$, and}\\
C \varepsilon^{-2} |\log \delta| & \mbox{when $a = 1$}.
\end{array}
\right.
\end{equation}

\noindent We now review the ``error terms'' from this perspective:

\begin{enumerate}

\item[(i)] As in the previous calculation, we write
$\tilde{u}(t_{k+1},\xi_{k+1},\eta_{k+1}) -  \tilde{u}(t_k,\xi_k,\eta_k)$ as
$$
[ \tilde{u}(t_{k+1},\xi_{k+1},\eta_{k+1}) -  \tilde{u}(t_{k+1},\xi_k,\eta_k) ] +
[ \tilde{u}(t_{k+1},\xi_k,\eta_k) -  \tilde{u}(t_k,\xi_k,\eta_k) ].
$$
When we estimate the first term by $2$nd order Taylor expansion in space and the second by
$1$st order Taylor expansion in time, we introduce an error of order
$$
\tau_{k+1}^{-1} \varepsilon^3 + \tau_{k+1}^{-3/2} \varepsilon^4
$$
by \eqref{estimates-for-u}. Using \eqref{riemann-sum-estimate} and assuming $\delta^{-1} \varepsilon^2 \leq 1/2$,
we see that these errors sum to at most a constant times $|\log \delta | \varepsilon + \varepsilon$.

\item[(ii)] The type-(ii) error in the upper-bound argument is
controlled at step $k$ by $|\tilde{u}_{\xi \xi}| \varepsilon^3  \leq \tau_k^{-1/2} \varepsilon^3.$
The situation is slightly worse in the lower bound argument, since the constant $\gamma$ in the market's
Case 2 scales like $\tau_k^{-1/2}$ at step $k$, leading to a type-(ii) error of order
$\tau_k^{-1} \varepsilon^3$. By \eqref{riemann-sum-estimate}, these errors sum to a term of order at most
$|\log \delta| \varepsilon$.

\item[(iii)] The increment in the value of $D_k$ from one step to the next is estimated by
$\varepsilon^2 |\partial_t D|  + \varepsilon |\partial_\xi D|$. At step $k$ this is at most
$\varepsilon^2 \tau_k^{-3/2} + \varepsilon \tau_k^{-1}$. Using \eqref{riemann-sum-estimate}
and assuming $\delta^{-1} \varepsilon^2 \leq 1/2 $, we see that the sum of these errors is at most
of order $|\log \delta| \varepsilon + \varepsilon$.

\item[(iv)] At time $k$, a single term $\varepsilon^2 D_{k}$ is estimated by
$\tau_k^{-1/2} \varepsilon^2$. Since $\tau_k \geq \delta$, when $\delta^{-1} \varepsilon^2 \leq 1/2$
this term is of order $\varepsilon$.

\item[(v)] Our previous estimate of the type-(v) error remains adequate for our present purpose: it is
at most $\delta^{1/2}$.
\end{enumerate}
Thus: both the upper-bound and lower-bound arguments, applied using $\tilde{u}$ rather than $u$,
give
$$
|u^\varepsilon(t,m,\xi,\eta) - u(t,\xi,\eta)| \leq
C \bigl( \varepsilon |\log \delta| + \varepsilon + \sqrt{\delta} \bigr)
$$
provided $\delta^{-1} \varepsilon^2 \leq 1/2$. Choosing $\delta = 2\varepsilon^2 $, we obtain the
desired estimate \eqref{estimate-classic-d=1}.
\end{proof}

We note that for the classic final-time data our estimate \eqref{estimate-classic-d=1} is independent of
time, while in Theorems \ref{1up} and \ref{1low} it was proportional to $(T-t)$. This difference arises
because for the classic final-time data, $u_\eta$ is constant and all the $\xi$ derivatives of $u$
of order $2$ or more decay to $0$ as $T-t \rightarrow \infty$ (note that $u_\xi$ solves the linear heat
equation with final time data $\half \mbox{sgn} \, \xi$). In the more general setting of Theorems
\ref{1up}--\ref{1low} the derivatives driving the error terms are controlled, but it is not clear that
they tend to $0$ as $T-t \rightarrow \infty$.

\section{The linear programs} \label{LP}
We have thus far focused on experts that use only the most recent market move (the case $d=1$). We identified
a strategy for the investor that makes her indifferent with respect to the cycles on the $d=1$ graph
(Section \ref{sec2-4}); then we used that strategy to prove upper and lower bounds that match at leading order
(Theorems \ref{1up} and \ref{1low}).

The situation is similar for experts that use up to four recent market moves (the cases $d=2,3,4$): there is a
strategy for the investor that achieves indifference with respect to the cycles on the relevant graph (see Section
\ref{sec4-4}), and it leads to upper and lower bounds that match at leading order (see Remark \ref{remark-on-matching-bounds}
in Section \ref{sec5-2}).

For experts that use more history ($d \geq 5$) we do not know whether indifference is achievable. Our methods still lead to
upper and lower bounds, but it is no longer clear that they match at leading order. The upper bound is associated with
a strategy for the investor that minimizes the maximum rate at which regret accumulates, among all possible cycles
on the graph. The lower bound is associated with a strategy for the market that maximizes the minimum rate at which regret accumulates, among all possible cycles on the graph. The identification of these strategies is the focus of this section.

In the proofs of Theorems \ref{1up} and \ref{1low}, most of the work involved considering how the solution $u$ of our
PDE changed along a well-chosen path $(t_0, m_0, \xi_0 , \eta_0), (t_1, m_1, \xi_1 , \eta_1), \ldots$ Within the special class of
strategies described by \eqref{f_m_old} and \eqref{f^*m}, we showed in Section \ref{sec2-4} that the increments are
$$
u(t_{k+1},\xi_{k+1}, \eta_{k+1}) - u(t_k, \xi_k, \eta_k) = \eps^2 L(t_k, m_k, b_k, \xi_k, \eta_k, f^\#_{t_k,m_k}) + O(\eps^3)
$$
where
$$
L(t, m, b, \xi, \eta, f^\#_{t, m}) = u_t +(q(m)-r(m))^2 D - 2 b u_{\eta} f^\#_{t, m}
$$
(see \eqref{TaylorMinMax} -- \eqref{defn-of-L}); here we use our usual convention
$D = \half \langle D^2u \cdot \frac{\nabla^\bot u}{u_{\eta}}, \frac{\nabla^\bot u}{u_{\eta}}\rangle$,
and $b_k$ is determined by the relation $(m_k)_{b_k} = m_{k+1}$. Since $u$ represents our estimate of
the investor's worst-case regret, we think of $ \eps^2 L$ as the {\it increment of regret}.

Our linear programs are concerned with the average rate at which regret accumulates, as the state $m$ traverses
various cycles. In formulating them, we will treat $u_t$, $D$, and $u_\eta$ as constants, ignoring the fact
that they are functions evaluated at different points in space-time. This seems reasonable, since the state changes
at every step while the location in space-time changes slowly (by increments of order $\eps$ in space and
$\eps^2$ in time). Our rigorous bounds, presented in Section \ref{bounds-d>1}, will
of course take into account the fact that $u_t$, $D$, and $u_\eta$ are not really constant.

As an introduction to the linear programs, it is convenient to revisit the case $d=1$. Its graph, shown in Figure
\ref{fig:graph(d=1)withoutcosts}, has three cycles: $00$, $11$, and $010$. The condition that for each cycle, the average
rate at which regret accumulates is at most $\eps^2 R$ is therefore
\begin{equation}
\begin{gathered}
u_t + (q(0) - r(0))^2 D + 2 u_\eta f_0^\# \leq R \\
u_t + (q(1) - r(1))^2 D - 2 u_\eta f_1^\# \leq R \label{d=1-lp-inv} \\
2 u_t + [(q(0)-r(0))^2 + (q(1)-r(1))^2] D - 2 u_\eta f_0^\# + 2 u_\eta f_1^\# \leq 2R.
\end{gathered}
\end{equation}
The investor wants to make $R$ as small as possible. Since $u_t$ is being treated as a constant, we
can move it to the right hand side. Setting $M = (R-u_t)/D$ and
\begin{equation} \label{f_m^sharp-vs-beta_m}
\beta_m = \frac{2 u_\eta}{D} f_m^\# ,
\end{equation}
and assuming $D > 0$, we see that \eqref{d=1-lp-inv} is equivalent to
\begin{equation}
\begin{gathered}
(q(0) - r(0))^2  + \beta_0 \leq M \\
(q(1) - r(1))^2  - \beta_1 \leq M  \label{d=1-lp-reduced-inv} \\
[(q(0)-r(0))^2 + (q(1)-r(1))^2] - \beta_0 + \beta_1 \leq 2M.
\end{gathered}
\end{equation}
(In practice, the parameter $D$ will come from the solution of our PDE, and it is
nonnegative by \eqref{require-time-deriv-neg}. We need not be concerned about the exceptional
case $D=0$, which is handled in Section \ref{bounds-d>1} by taking $f_m^\# = 0$.)
The {\it investor's linear program} for $d=1$ is thus to find $\beta_0$, $\beta_1$, and $M$
that minimize $M$ subject to \eqref{d=1-lp-reduced-inv}.

A similar discussion applies for the market. The condition that for each simple cycle, the average
rate at which regret accumulates is at least $\eps^2 R$ is
\begin{gather*}
u_t + (q(0) - r(0))^2 D + 2 u_\eta f_0^\# \geq R \\
u_t + (q(1) - r(1))^2 D - 2 u_\eta f_1^\# \geq R \\
2 u_t + [(q(0)-r(0))^2 + (q(1)-r(1))^2] D - 2 u_\eta f_0^\# + 2 u_\eta f_1^\# \geq 2R.
\end{gather*}
Changing variables as before, this is equivalent to
\begin{equation}
\begin{gathered}
(q(0) - r(0))^2  + \beta_0 \geq M \\
(q(1) - r(1))^2  - \beta_1 \geq M \label{d=1-lp-reduced-mkt}\\
[(q(0)-r(0))^2 + (q(1)-r(1))^2] - \beta_0  + \beta_1 \geq 2M.
\end{gathered}
\end{equation}
The {\it market's linear program} for $d=1$ is thus to find $\beta_0$, $\beta_1$, and $M$
that maximize $M$ subject to \eqref{d=1-lp-reduced-mkt}.

The following properties of these linear programs are immediately evident:
\begin{enumerate}
\item[(i)] If $\beta_0$, $\beta_1$, and $M$ are admissible for \eqref{d=1-lp-reduced-inv}, then
adding the constraints gives
$$
M \geq \frac{(q(0)-r(0))^2 + (q(1)-r(1))^2}{2}.
$$
\item[(ii)] If $\beta_0$, $\beta_1$, and $M$ are admissible for \eqref{d=1-lp-reduced-mkt}, then
adding the constraints gives
$$
M \leq \frac{(q(0)-r(0))^2 + (q(1)-r(1))^2}{2}.
$$
\item[(iii)] In view of (i) and (ii), the values of the two linear programs coincide when
there is a choice of $\beta_0$ and $\beta_1$ that achieves indifference, in the sense that the LHS
of each of the inequalities in \eqref{d=1-lp-reduced-inv} (or equivalently, each of the inequalities
in \eqref{d=1-lp-reduced-mkt}) takes the same value; moreover the common value is then
$\half [(q(0)-r(0))^2 + (q(1)-r(1))^2]$.
\item[(iv)] The values of the two linear programs do indeed coincide, since the choice
$\beta_0 = \beta_1 = \half[(q(1)-r(1))^2 - (q(0)-r(0))^2]$ achieves indifference.
\end{enumerate}
We shall show in Sections \ref{sec4-1} and \ref{sec4-2} that analogues of (i)--(iii) hold for
any $d$. We do not know whether the analogue of (iv) holds for any $d$, however we shall show
in Section \ref{sec4-4} that indifference is achievable when $d=2$, $3$, or $4$.

\subsection{The investor's linear program} \label{sec4-1}

The investor's linear program for $d=1$, presented above as the minimization of $M$ subject to \eqref{d=1-lp-reduced-inv},
can be written as
\begin{align} \label{d=1-lp-final-inv}
&\min M ~~~ \mbox{such that}\nonumber \\
&  A\begin{pmatrix}
-\beta_0\\
-\beta_1\\
M
\end{pmatrix} \leq g
\end{align}
with
$$
A =
\begin{pmatrix}
 -1& 0& -1& \\
 0 &1 & -1\\
1& -1&  -2
\end{pmatrix}
\quad \mbox{and} \quad
g= -
\begin{pmatrix}
(q(0)-r(0))^2 \\ (q(1)-r(1))^2 \\ (q(0)-r(0))^2 + (q(1)-r(1))^2
\end{pmatrix}
.
$$

Notice that each row of $A$ corresponds to a simple cycle; the final entry in a row is minus the length of the cycle, while
the row's other elements reflect the details of the cycle. As for $g$: each row is minus the sum of $(q(m)-r(m))^2$, as
$m$ ranges over the vertices participating in the associated cycle.

The investor's linear program is similar for any value of $d$: it always has the form
\begin{align} \label{LP1m}
&\min M ~~~ \mbox{such that}\nonumber \\
&  A\begin{pmatrix}
-\beta_{0}\\
\dots\\
-\beta_{ 2^d-1}\\
M
\end{pmatrix} \leq g
\end{align}
where each row of $A$ corresponds to a simple cycle on the underlying graph (the $d$-dimensional de Bruijn graph on
$2$ symbols, see Section \ref{sec2-2}). If the graph has $l$ simple cycles then $A$ is an $l \times (2^d + 1)$ matrix.
Its first $2^d$ columns are in correspondence with the vertices: for $0 \leq m \leq 2^d -1$, the $(m+1)$th column reports,
for each cycle, whether the cycle includes vertex $m$, and if so then whether the cycle leaves it
by a $+$ edge or a $-$ edge. The last column of $A$ is, up to sign, the number of edges in the cycle. The elements
of $g$ are, up to sign, the sum of $(q(m)-r(m))^2$ as $m$ ranges over the vertices in the given cycle. More explicitly:
the first $2^d$ columns of the matrix $A$ are determined by
\begin{align} \label{first-cols-of-A}
a_{i,m+1}=1 & \quad \mbox{if cycle $i$ contains an edge from $m$ to $m_+$}; \nonumber \\
a_{i,m+1}=-1 & \quad \mbox{if cycle $i$ contains an edge from $m$ to $m_-$};\\
a_{i,m+1} = 0 & \quad \mbox{if cycle $i$ does not pass through vertex $m$;} \nonumber
\end{align}
the last column of $A$ has entries
\begin{equation} \label{last-col-of-A}
a_{i, 2^d+1}=-|s_i| \ \mbox{where $|s_i|$ is the number of edges in cycle $i$;}
\end{equation}
and the elements of $g$ are
\begin{equation} \label{entries-of-g}
g_i =  -\sum_{{\rm vertices} \, m \, {\rm in \, cycle} \, i}(q(m)-r(m))^2.
\end{equation}
While $A$ happens to be square when $d=1$, it is usually rectangular; for example, when $d=2$ there are $6$ simple
cycles and $4$ vertices, so $A$ is $6 \times 5$.

The interpretation of the investor's linear program \eqref{LP1m} is the same for any $d$ as it was for $d=1$:
if $\beta_m$ ($0 \leq m \leq 2^d - 1$) and $M$ are admissible then choice $f_m^\# = \frac{D}{2u_\eta} \beta_m$
assures that the average rate at which regret accumulates over any simple cycle is at most $\eps^2 R$ with $R = u_t + MD$.

\begin{lemma} \label{lemma:investor-LP}
The investor's linear program \eqref{LP1m} has the following properties:
\begin{enumerate}
\item[(a)] The feasible set is nonempty; moreover, every feasible point has the property that
\begin{equation} \label{lower-bound-on-M}
M \geq \frac{\sum_{m =0}^{2^d-1}(q(m)-r(m))^2}{2^d}.
\end{equation}
\item[(b)] The optimum is achieved.
\item[(c)] If there is a strategy that achieves indifference (i.e. if there is a feasible point for which the constraints
all hold with equality) then the optimal value of the investor's linear program is\\
$ 2^{-d} \sum_{m =0}^{2^d-1}(q(m)-r(m))^2$.
\end{enumerate}
\end{lemma}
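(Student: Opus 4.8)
The plan is to derive all three parts from a single combinatorial fact: the $d$-dimensional de Bruijn graph on two symbols is Eulerian. It is convenient to rewrite the $i$th constraint of \eqref{LP1m}, using the definitions \eqref{first-cols-of-A}--\eqref{entries-of-g}, in the form
\[
|s_i|\,M \;\geq\; \sum_{m \,\in\, C_i}(q(m)-r(m))^2 \;-\; \sum_{m} a_{i,m+1}\,\beta_m ,
\]
where $C_i$ is the simple cycle indexing row $i$ and ``$m \in C_i$'' means $m$ is a vertex of $C_i$. Nonemptiness of the feasible set (part (a)) is then immediate: taking $\beta_m = 0$ for all $m$ and $M = \sum_{m=0}^{2^d-1}(q(m)-r(m))^2$ satisfies every constraint, since $|s_i| \geq 1$ and the sum over the vertices of $C_i$ is at most the sum over all vertices.

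For the lower bound \eqref{lower-bound-on-M}, I fix any feasible $(\beta,M)$ and add together the constraints corresponding to a well-chosen family of cycles. The de Bruijn graph is strongly connected (one can steer from any state to any other in at most $d$ steps) and every vertex has in-degree and out-degree $2$, so it possesses an Eulerian circuit; by Lemma \ref{walkcycles} this circuit decomposes into simple cycles $C_{j_1},\dots,C_{j_p}$ which are edge-disjoint and together use each of the $2^{d+1}$ edges exactly once (these are therefore among the rows of $A$). Summing the $p$ corresponding constraints and using three elementary bookkeeping observations --- (i) $\sum_\ell |s_{j_\ell}| = 2^{d+1}$ (the total edge count); (ii) each vertex $m$ lies in exactly two of these cycles, the one carrying its outgoing $+$ edge and the one carrying its outgoing $-$ edge, whence $\sum_\ell \sum_{m \in C_{j_\ell}}(q(m)-r(m))^2 = 2\sum_m (q(m)-r(m))^2$; and (iii) for each $m$ those two cycles contribute $a_{\cdot,m+1} = +1$ and $a_{\cdot,m+1} = -1$ respectively, while every other cycle contributes $0$, so $\sum_\ell a_{j_\ell,m+1} = 0$ and the $\beta$-terms cancel entirely --- one obtains $2^{d+1} M \geq 2\sum_m (q(m)-r(m))^2$, which is exactly \eqref{lower-bound-on-M}.

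Part (b) is then routine: the feasible set is a nonempty polyhedron on which the objective $M$ is bounded below by part (a), so by the fundamental theorem of linear programming the minimum is attained at some feasible point. For part (c), suppose $(\beta^0, M^0)$ is feasible with all constraints holding with equality. Summing the equality constraints over the same Eulerian family $C_{j_1},\dots,C_{j_p}$ and invoking (i)--(iii) verbatim --- now with equalities rather than inequalities --- gives $2^{d+1} M^0 = 2\sum_m (q(m)-r(m))^2$, i.e. $M^0 = 2^{-d}\sum_m (q(m)-r(m))^2$. Since part (a) shows $M \geq 2^{-d}\sum_m (q(m)-r(m))^2$ at every feasible point, the point $(\beta^0,M^0)$ is optimal and the optimal value of the investor's linear program equals $2^{-d}\sum_m (q(m)-r(m))^2$.

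The only genuinely non-routine point --- and hence the step I expect to be the crux --- is recognizing that the correct nonnegative combination of constraints certifying \eqref{lower-bound-on-M} is precisely the indicator of an Eulerian cycle decomposition, and then verifying observations (ii) and (iii) carefully. The verification rests on the fact that a simple cycle uses exactly one outgoing edge from each vertex it visits (including its start/end vertex, and including self-loops, such as those at the states $0^d$ and $1^d$); granting this, (ii) and (iii) hold exactly rather than approximately, which is what makes the bound in (a) sharp and makes (c) fall out with no extra work. Everything downstream of this observation is bookkeeping with the definitions.
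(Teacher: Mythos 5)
Your proof is correct and takes essentially the same route as the paper's: you use $\beta\equiv 0$ with a large $M$ for feasibility, decompose an Eulerian circuit of the de Bruijn graph into edge-disjoint simple cycles to derive the lower bound on $M$ (with the $\beta$-terms cancelling because each vertex contributes one $+$ and one $-$ outgoing edge), cite the fundamental theorem of linear programming for (b), and re-run the Eulerian summation with equalities for (c). The only cosmetic difference is your particular choice of feasible $M$ in part (a); the key idea — summing over the cycles of an Eulerian decomposition — is exactly the paper's.
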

\begin{proof}
The existence of a feasible point is easy: if we take $\beta_m = 0$ for all $m$, then the constraints are satisfied provided
that for each cycle $i$,
$$
M \geq \frac{1}{|s_i|} \sum_{{\rm vertices} \, m \, {\rm in \, cycle} \, i}(q(m)-r(m))^2
$$
where $|s_i|$ is the number of edges in cycle $i$.

To complete the proof of (a) we must establish the lower bound \eqref{lower-bound-on-M}. We use here the fact that
our de Bruijn graph is {\it Eulerian}, i.e. there exists a closed walk on the graph that traverses each edge exactly once
(see e.g. \cite{West}). Consider a decomposition of this walk as a union of simple cycles.
Clearly no cycle can appear more than
once; and in the union of all the cycles that are used, each vertex $m$ appears twice -- once in connection with the edge
from $m$ to $m_+$, and once in connection with the edge from $m$ to $m_-$. When we add the constraints corresponding the
cycles that are used, the terms involving $\beta_m$ cancel (since each $\beta_m$ appears twice, with opposite signs -- once
due to the edge from $m$ to $m_+$, and once due to the edge from $m$ to $m_-$). The
sum of these constraints thus reduces to
$$
2 \sum_{m=0}^{2^d-1} (q(m)-r(m))^2 \leq 2^{d+1} M,
$$
which is equivalent to \eqref{lower-bound-on-M}.

Assertion (b) follows immediately from (a), using the general result from linear programming that if an LP
of the form \eqref{LP1m}
is feasible and bounded below then the optimal value is achieved. Alternatively (and more constructively),
one can see (b) by considering the simplex method (with a suitable scheme to prevent cycling). At each step
the simplex method moves in a direction that improves the objective. A move to infinity cannot occur, since
the objective is bounded below. Therefore the simplex method moves from vertex to vertex, terminating
at one that is optimal.

Assertion (c) is clear from the proof of \eqref{lower-bound-on-M}. Indeed, we proved that result by adding the inequalities
associated with the cycles of an Eulerian circuit. A strategy that achieves indifference turns those inequalities into
equalities, so equality must also hold in \eqref{lower-bound-on-M}.
\end{proof}

\subsection{The market's linear program} \label{sec4-2}

Comparing \eqref{d=1-lp-reduced-inv} and \eqref{d=1-lp-reduced-mkt}, we see that for $d=1$ the market's linear program
is obtained from the investor's by changing the direction of the inequalities and changing the objective from
$\min M$ to $\max M$. The situation is the same for any $d$: the market's linear program is
\begin{align} \label{LP2m}
&\max M ~~~ \mbox{such that}\nonumber \\
&  A\begin{pmatrix}
-\beta_{0}\\
\dots\\
-\beta_{ 2^d-1}\\
M
\end{pmatrix} \geq g
\end{align}
where $A$ and $g$ are still defined by \eqref{first-cols-of-A}--\eqref{entries-of-g}. Its interpretation is analogous
to the case $d=1$: if $\beta_m$ ($0 \leq m \leq 2^d - 1$) and $M$ are admissible for \eqref{LP2m} then the choice
$f_m^\# = \frac{D}{2u_\eta} \beta_m$ assures that the average rate at which regret accumulates over
any simple cycle is at least $\eps^2 R$ with $R = u_t + MD$.

The following analogue of Lemma \ref{lemma:investor-LP} is proved using the same arguments.

\begin{lemma} \label{lemma:market-LP}
The market's linear program \eqref{LP2m} has the following properties:
\begin{enumerate}
\item[(a)] The feasible set is nonempty; moreover, every feasible point has the property that
\begin{equation} \label{upper-bound-on-M}
M \leq \frac{\sum_{m =0}^{2^d-1}(q(m)-r(m))^2}{2^d}.
\end{equation}
\item[(b)] The optimum is achieved.
\item[(c)] If there is a strategy that achieves indifference (i.e. if there is a feasible point for which the constraints
all hold with equality) then the optimal value of the market's linear program is\\
$ 2^{-d} \sum_{m =0}^{2^d-1}(q(m)-r(m))^2$.
\end{enumerate}
\end{lemma}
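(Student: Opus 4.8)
The plan is to mirror, step by step, the proof of Lemma \ref{lemma:investor-LP}, simply reversing every inequality and replacing ``minimize'' by ``maximize.'' For part (a), feasibility is immediate: take $\beta_m = 0$ for every $m$, so that the constraint attached to cycle $i$ collapses to $\sum_{\text{vertices } m \text{ in cycle } i}(q(m)-r(m))^2 \geq |s_i| M$, which holds for $M = 0$ (indeed for any sufficiently negative $M$), the left side being a sum of squares. Hence the feasible set is nonempty.

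The heart of part (a) is the upper bound \eqref{upper-bound-on-M}, and it runs exactly parallel to the derivation of \eqref{lower-bound-on-M} in Lemma \ref{lemma:investor-LP}. I would invoke the fact (cited from \cite{West}) that the $d$-dimensional de Bruijn graph is Eulerian, fix a closed Eulerian walk, and decompose it into simple cycles using Lemma \ref{walkcycles}. Since the walk traverses every edge exactly once, both outgoing edges of each vertex $m$ get used, and because a simple cycle uses at most one outgoing edge of a given vertex, $m$ lies in exactly two of the cycles of the decomposition: once via the edge $m \to m_+$ and once via $m \to m_-$. Inspecting the sign conventions \eqref{first-cols-of-A} in the market's constraints $A(-\beta_0,\dots,-\beta_{2^d-1},M) \geq g$, a cycle leaving $m$ by a $+$ edge contributes $-\beta_m$ to its constraint while one leaving by a $-$ edge contributes $+\beta_m$; thus when we add the constraints over all cycles in the decomposition the $\beta_m$ terms cancel in pairs, each $(q(m)-r(m))^2$ appears exactly twice, and the total number of edges is $2^{d+1}$. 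The summed inequality is therefore $2\sum_{m=0}^{2^d-1}(q(m)-r(m))^2 \geq 2^{d+1} M$, which is precisely \eqref{upper-bound-on-M}.

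Part (b) then follows formally: the market's linear program is feasible and its objective is bounded above on the feasible set by \eqref{upper-bound-on-M}, so by the fundamental theorem of linear programming the maximum is attained; alternatively one may run the simplex method with an anti-cycling rule and note that, since no improving ray exists, it must terminate at an optimal vertex. For part (c), a strategy achieving indifference is by definition a feasible point at which every cycle constraint is an equality; in particular the constraints for the cycles of the Eulerian decomposition above are equalities, so the summed inequality is an equality and the $M$-value of this point equals $2^{-d}\sum_{m=0}^{2^d-1}(q(m)-r(m))^2$. By part (a) no feasible point exceeds this value, so the indifference point is optimal and the optimal value is $2^{-d}\sum_{m=0}^{2^d-1}(q(m)-r(m))^2$.

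The only place that requires genuine care — the ``main obstacle,'' such as it is — is the sign bookkeeping in the Eulerian-circuit argument: one must verify that, in the market's constraint system, the contribution of vertex $m$ to a cycle leaving it by a $+$ edge is exactly $-\beta_m$ and by a $-$ edge exactly $+\beta_m$, so that the two appearances of $m$ along the Eulerian walk cancel. Everything else is a direct transcription of the investor's argument with the inequalities reversed.
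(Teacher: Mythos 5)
Your proof is correct and matches the paper's intent exactly: the paper itself states that Lemma \ref{lemma:market-LP} is ``proved using the same arguments'' as Lemma \ref{lemma:investor-LP}, and your proposal is precisely that transcription with the inequalities reversed, including the same Eulerian-circuit decomposition and the same cancellation of the $\beta_m$ terms. The sign bookkeeping you flag as the one delicate point is indeed correct: a cycle leaving $m$ by a $+$ edge contributes $-\beta_m$ and by a $-$ edge contributes $+\beta_m$, so the two appearances of each vertex along the Eulerian walk cancel.
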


\begin{corollary} \label{corollary:indifference-implies-equal-values}
The optimal value of the market's linear program is less than or equal to that of the investor's linear program. If
there is a strategy that achieves indifference then the optimal values are equal.
\end{corollary}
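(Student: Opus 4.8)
The plan is to deduce the corollary directly from parts (a) and (c) of Lemmas \ref{lemma:investor-LP} and \ref{lemma:market-LP}, which is where all the real work has already been done; no new ideas are needed. Abbreviate $S := \sum_{m=0}^{2^d-1}(q(m)-r(m))^2$, so that the common ``indifference value'' is $2^{-d}S$.

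For the first assertion, I would argue as follows. By Lemma \ref{lemma:investor-LP}(a), every feasible point of the investor's linear program \eqref{LP1m} satisfies $M \geq 2^{-d} S$; hence the optimal (minimum) value $M_{\mathrm{inv}}$ of the investor's program satisfies $M_{\mathrm{inv}} \geq 2^{-d} S$. By Lemma \ref{lemma:market-LP}(a), every feasible point of the market's linear program \eqref{LP2m} satisfies $M \leq 2^{-d} S$; hence the optimal (maximum) value $M_{\mathrm{mkt}}$ satisfies $M_{\mathrm{mkt}} \leq 2^{-d} S$. Combining these two inequalities gives $M_{\mathrm{mkt}} \leq 2^{-d} S \leq M_{\mathrm{inv}}$, which is exactly the claimed inequality between the two optimal values. (Both optima are attained, by part (b) of each lemma, so there is no subtlety about infima versus minima.)

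For the second assertion, suppose there is a choice of $\{\beta_m\}$ and $M$ for which all the constraints of \eqref{LP1m} — equivalently, all the constraints of \eqref{LP2m} — hold with equality. Then Lemma \ref{lemma:investor-LP}(c) gives $M_{\mathrm{inv}} = 2^{-d} S$, and Lemma \ref{lemma:market-LP}(c) gives $M_{\mathrm{mkt}} = 2^{-d} S$; therefore $M_{\mathrm{inv}} = M_{\mathrm{mkt}}$, as desired.

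There is essentially no obstacle here: the corollary is a bookkeeping consequence of the two lemmas, and the only thing to be careful about is that the ``indifference'' feasible point is the same object for both programs (the constraints of \eqref{LP1m} and \eqref{LP2m} differ only in the direction of the inequalities, so a point where each holds with equality is feasible — indeed optimal — for both). If one wanted a fully self-contained statement one could also note explicitly that the indifference point is itself optimal for each program, since its value $2^{-d}S$ already meets the bound from part (a) of the respective lemma; but this is not needed for the corollary as stated.
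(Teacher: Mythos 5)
Your proof is correct and is essentially the paper's own argument, just spelled out more explicitly: the first assertion follows from comparing the bounds in part (a) of Lemmas \ref{lemma:investor-LP} and \ref{lemma:market-LP}, and the second from part (c) of each lemma.
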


\begin{proof}
The first assertion is clear by combining part (a) of Lemma \ref{lemma:investor-LP} with part (a) of Lemma \ref{lemma:market-LP}.
The second assertion follows from part (c) of each Lemma.
\end{proof}

\subsection{Why are the LP's not convex duals?} \label{sec4-3}

In finite dimensions, zero-sum two-person games using mixed strategies can
be analyzed using the duality theory of linear programming, leading to a
pair of dual linear programs -- one for the ``minimizing'' player, the other
for the ``maximizing'' player. We emphasize that this is \textit{not} the relationship
between our linear programs \eqref{LP1m} and \eqref{LP2m}.

It is natural to ask whether duality might somehow be relevant to our problem. The answer
is this:
\begin{enumerate}
\item[(a)] If the rules of the game {\it required} the investor to choose
$f_m = f_m^* + \varepsilon f_m^\#$, as envisioned by the heuristic argument presented in
Section \ref{sec2-4}, then duality would be relevant (at least formally). Indeed, as we shall show in a
moment, the dual of the investor's linear program provides a convex combination of simple cycles that makes the
market indifferent to the investor's choice of $f_m^\#$ and provides (at least formally) a lower bound that
matches our upper bound.

\item[(b)] Alas, the rules of our game do not require the investor to behave this way (and we have
not proved that the optimal choice has this form). In our lower bound for $d \geq 2$, presented in Section
\ref{bounds-d>1}, the idea of the proof is similar to what we did in Section \ref{sec3-2} for $d=1$. The market
considers a {\it particular} investor strategy of the form $f_m = f_m^* + \varepsilon f_m^\#$, and chooses the stock
movement $b=\pm 1$ based on the sign of $\hat{f} - f_m$, where $\hat{f}$ is the investor's choice. Roughly speaking,
the market uses his discretion over the stock price evolution to ``force'' the investor to use the particular strategy
(much as we did for $d=1$ in \eqref{market-strategy-case1}--\eqref{market-strategy-case2}). In doing so, the
market loses all control over the cycle decomposition of the resulting walk; therefore his worst-case
estimate involves the minimum rate at which regret accumulates (the minimization being over all simple cycles).
Our market's linear program, which chooses $f_m^\#$ to maximize this, seems quite different from the dual
of the investor's linear program.
\end{enumerate}

The rest of this subsection explains point (a). The investor's linear program chooses $f_m^\#$ to
$$
\min_{f_m^\#} \max_{\rm cycles} \ (\mbox{average rate at which regret accumulates}).
$$
Normalizing as we did in Section \ref{sec4-1}, and indexing the simple cycles by $i=1,\ldots,n$, this amounts to
\begin{equation} \label{investor-lp-as-minmax}
\min_{\beta_m} \max_{1 \leq i \leq n} \frac{1}{|s_i|} \sum_{
\substack{{\rm vertices} \, m \\ {\rm on} \, {\rm cycle} \, i}
} [(q(m) - r(m))^2 - b_{i,m} \beta_m]
\end{equation}
where $|s_i|$ is the number of edges in cycle $i$, and $b_{i,m} = +1$ if cycle $i$ leaves
vertex $m$ by the edge from $m$ to $m_+$, while $b_{i,m} = -1$ if cycle $i$
leaves vertex $m$ by the edge from $m$ to $m_-$. (Note that $b_{i,m} = a_{i,m+1}$, according to \eqref{first-cols-of-A}.)
The inner max is not changed if we permit ``mixed strategies,'' i.e. a probability distribution over the possible
cycles. So the investor's linear program solves
\begin{equation} \label{investor-lp-as-minmax}
\min_{\beta_m} \max_{\substack{p_1 + \cdots + p_n = 1\\ p_i \geq 0 }}
\sum_{i=1}^n p_i \frac{1}{|s_i|} \sum_{
\substack{{\rm vertices} \, m \\ {\rm on} \, {\rm cycle} \, i}
} [(q(m) - r(m))^2 - b_{i,m} \beta_m] .
\end{equation}
The dual is obtained by switching the min and the max, then evaluating the inner minimization. Since the
variables $\beta_m$ are unbounded,
\begin{multline} \label{dual-of-investor-lp}
\min_{\beta_m} \sum_{i=1}^n p_i \frac{1}{|s_i|} \sum_{
\substack{{\rm vertices} \, m \\ {\rm on} \, {\rm cycle} \, i}
} [(q(m) - r(m))^2 - b_{i,m} \beta_m] = \\
\left\{ \begin{array}{cl} -\infty & \mbox{if the variables $\beta_m$ don't cancel out}\\
\sum_{i=1}^n p_i \frac{1}{|s_i|} \sum_{
\substack{{\rm vertices} \, m \\ {\rm on} \, {\rm cycle} \, i}
} (q(m) - r(m))^2 & \mbox{if they do}.
  \end{array} \right.
\end{multline}
Thus: the dual of the investor's linear program involves mixed strategies over the simple cycles that make the market
insensitive to the investor's choice of $\beta_m$ (or equivalently, $f_m^\#$); the optimal mixed strategy is the one
that maximizes the value of \eqref{dual-of-investor-lp}. Since min-max equals max-min in this setting, the value
achieved by the optimal mixed strategy is the same as that of the investor's linear program.

Does the market have access to such mixed strategies? We suppose so, since the market can choose any walk it likes.

As noted earlier, throughout this section we have ignored the fact that $u_t$, $u_\eta$, and
$D = \half \langle D^2u \cdot \frac{\nabla^\bot u}{u_{\eta}}, \frac{\nabla^\bot u}{u_{\eta}}\rangle$ are not really
constant. Our upper and lower bounds for $d \geq 2$, presented in Section \ref{bounds-d>1}, deal with this
issue -- but only for linear programs discussed in Sections \ref{sec4-1} and \ref{sec4-2}. We have not attempted to
address this issue for the dual of the investor's linear program.

\subsection{Coalescence of the optimal values for $d\leq 4$} \label{sec4-4}

Recall from Corollary \ref{corollary:indifference-implies-equal-values} that the investor's linear program and the market's
linear program have the same optimal value if there is a choice of $ \{\beta_m \}_{m=0}^{2^d -1}$ that achieves indifference. We have
already shown the existence of such a choice when $d=1$. This subsection shows the existence of such a choice when
$d=2$, $3$, or $4$. Alas, since our analysis is by brute force, it offers little insight about whether indifference is
also achievable for $d \geq 5$.
\bigskip

\noindent {\sc The case $d=2$.} The graph for $d=2$ was shown in Figure \ref{fig:graph(d=2)withoutcosts}. It is easy to
see that there are six simple cycles, and we enumerated them in Section \ref{sec2-2}. Introducing the notation
$$
\gamma_m = (q(m)-r(m))^2,
$$
indifference requires that $\beta_0, \ldots, \beta_3$ satisfy
\begin{equation}
\begin{gathered}
\gamma_0 + \beta_0 = M\\
\gamma_3 - \beta_3 = M\\
\gamma_1 + \gamma_2 + \beta_1 - \beta_2 = 2M \label{eqns-for-indifference-d=2}\\
\gamma_0 + \gamma_1 + \gamma_2 - \beta_0 + \beta_1 + \beta_2 = 3M\\
\gamma_1 + \gamma_2 + \gamma_3 - \beta_1 - \beta_2 + \beta_3 = 3M\\
\gamma_0 + \gamma_1 + \gamma_2 + \gamma_3 - \beta_0 - \beta_1 + \beta_2 + \beta_3 = 4M
\end{gathered}
\end{equation}
and we know the value of $M$ from Lemma \ref{lemma:investor-LP}(c). Elementary manipulation
reveals that indifference is achieved when
\begin{align*}
& M = \frac{\gamma_0 + \gamma_1 + \gamma_2 + \gamma_3}{4}\\
& \beta_0=\beta_2 = M-\gamma_0\\
& \beta_1=\beta_3 = \gamma_3 - M.
\end{align*}

\noindent {\sc The case $d=3$.}
It is straightforward to draw the $d=3$ graph (see e.g. Figure 1 of \cite{Fredricksen}). It is possible (though laborious)
to check by hand that there are $19$ simple cycles, and to write down the $d=3$ analogue of \eqref{eqns-for-indifference-d=2}
(a system of $19$ linear equations in the $9$ unknowns $\gamma_0, \ldots, \gamma_7$ and $M$). Solving that system, one
finds that indifference is achieved for $d=3$ when
\begin{align*}
& M = \frac{\sum_{m=0}^{7} \gamma_i}{8}\\
& \beta_0=\beta_4 = M - \gamma_0\\
& \beta_1=\beta_5 = -M + \frac{-\gamma_2 + \gamma_3 + \gamma_6 + \gamma_7}{2}\\
& \beta_2=\beta_6 =M - \frac{\gamma_0 + \gamma_1 + \gamma_4 - \gamma_5}{2}\\
& \beta_3=\beta_7 = \gamma_7 - M.
\end{align*}

\noindent {\sc The case $d=4$.} The $d=4$ graph is not planar, but it is still not difficult to visualize
(see e.g. Figure 1 of \cite{Fredricksen}). We wrote a Matlab program to enumerate the simple cycles; it found $179$
of them. Using the results, we looked numerically for linear combinations of $\{ \gamma_m \}_{m=0}^{15}$ that
achieve indifference. This led to the conclusion that indifference is achieved for $d=4$ when
\begin{align*}
& M = \frac{\sum_{i=0}^{15}\gamma_i}{16}\\
& \beta_0=\beta_8 =M - \gamma_0\\
& \beta_1=\beta_9 = -M + \frac{-2\gamma_2 + 2\gamma_3 - \gamma_4 - \gamma_5 + \gamma_6 + \gamma_7 + \gamma_{12} +
\gamma_{13} + \gamma_{14} + \gamma_{15}}{4}\\
& \beta_2=\beta_{10} = M - \frac{\gamma_0 + \gamma_1 + \gamma_2 + \gamma_3 + 2\gamma_4 - 2\gamma_5
+ \gamma_8 + \gamma_9 - \gamma_{10} - \gamma_{11}}{4}\\
& \beta_3=\beta_{11} = -M + \frac{-2\gamma_6 + 2\gamma_7 + 2\gamma_{14} + 2\gamma_{15}}{4}\\
& \beta_4=\beta_{12} =M - \frac{2\gamma_0 + 2\gamma_1 + 2\gamma_8 - 2\gamma_9}{4}\\
& \beta_5=\beta_{13}= - M + \frac{-\gamma_4 - \gamma_5 + \gamma_6 + \gamma_7 - 2\gamma_{10} +
2\gamma_{11} + \gamma_{12} + \gamma_{13} + \gamma_{14} + \gamma_{15}}{4}\\
& \beta_6=\beta_{14} =M - \frac{\gamma_0 + \gamma_1 + \gamma_2 + \gamma_3 + \gamma_8 + \gamma_9 -
\gamma_{10} - \gamma_{11} + 2\gamma_{12} - 2\gamma_{13}}{4}\\
& \beta_7=\beta_{15} = - M + g_{15}.
\end{align*}

We wonder whether indifference might be achievable for any $d$. Alas, the brute force approach we have
used for $d \leq 4$ does not provide much guidance. (It does, however, provide some hints; in particular, the choices
achieving indifference for $d=2, 3, 4$ all have the symmetry that $\beta_m = \beta_{m + 2^{d-1}}$ for
$0 \leq m \leq 2^{d-1} - 1$. This means that $\beta_m$ actually depends only on the most recent $d-1$ market moves.)

\section{Upper and lower bounds for general $d$} \label{bounds-d>1}

This section provides our fully rigorous upper and lower bounds for the general case, when our two experts use $d$ days of
data. The arguments are in many ways parallel to those presented in Section \ref{bounds-d=1} where we considered the case $d=1$.
The main differences are:
\begin{enumerate}
\item[(i)] For $d=1$ there is a choice of $\beta_m$ that achieves indifference (in the sense of Lemmas \ref{lemma:investor-LP}(c)
and \ref{lemma:market-LP}(c)), and the proofs of both the upper and lower bounds used the associated value of $f_m^\#$ (this was
the logic behind \eqref{f-when-d=1}). For general $d$, our upper bound uses the investor's linear program to
determine $f_m^\#$, whereas our lower bound uses the market's linear program.

\item[(ii)] In formulating the linear programs at the beginning of Section \ref{LP} we ignored the fact that $u_t$, $u_\eta$, and
$D = \half \langle D^2u \cdot \frac{\nabla^\bot u}{u_{\eta}}, \frac{\nabla^\bot u}{u_{\eta}}\rangle$ are functions of
$x$ and $t$. For $d=1$, the errors associated with their variability were relatively easy to control
(see \eqref{same-number}--\eqref{wandering-of-D}). While that argument did not make explicit use of the graph,
it basically relied on the simple cycle structure of the $d=1$ graph. The corresponding arguments for general $d$ are more
complicated since the cycle structure of the graph is less controlled.
\end{enumerate}

\noindent In connection with the second point, we shall need the following refinement of Lemma \ref{walkcycles}.

\begin{lemma} \label{walkcycles-refined}
When a closed walk on a directed graph is decomposed into a union of simple cycles by the argument
used to prove Lemma \ref{walkcycles}, the decomposition has the following property: for any simple cycle $s$
that appears in the decomposition more than once, each instance of $s$ is completed before the next
begins (as one traverses the walk from beginning to end).
\end{lemma}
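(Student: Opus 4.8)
The plan is to recast the cycle-extraction procedure of Lemma~\ref{walkcycles} as a single left-to-right pass governed by a stack, and then to exhibit a nested (laminar) structure on the extracted cycles. Processing $a_0,a_1,\dots,a_k$ in order while maintaining a stack of vertices: when $a_t$ is not currently on the stack, push it; when $a_t$ \emph{is} already on the stack, pop every vertex lying above the earlier copy of $a_t$ and record the popped vertices together with $a_t$ as a simple cycle (based at $v:=a_t$). One verifies by induction on the number of edges that this outputs exactly the same sequence of simple cycles, in the same order, as the recursive procedure in the proof of Lemma~\ref{walkcycles}: once the first repeated vertex $a_j=a_i$ is reached, the stack holds the distinct prefix $a_0,\dots,a_i$, which is precisely the reduced closed walk $W'=a_0,\dots,a_i,a_{j+1},\dots,a_k$ that the recursion re-enters, so the inductive hypothesis applies to $W'$. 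Throughout one has the invariant that after processing $a_t$ the top of the stack is $a_t$; this is what forces the recorded vertex-lists to be edge-consistent cycles, and it is the main technical tool in what follows.

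\noindent Next I would attach to each extracted cycle $C$ its \emph{position interval}: the edges of the closed walk are $e_1,\dots,e_k$, the extracted cycles partition these edges, and I let $[\min C,\max C]\subseteq\{1,\dots,k\}$ be the interval spanned by the positions of $C$'s edges. Since $C$ is output at the step $t_C$ at which its base $v_C$ is revisited, $\max C=t_C$, while $\min C$ is the step at which the vertex directly above $v_C$ (in the stack at time $t_C$) was pushed. The key structural claim is that these intervals are \emph{laminar}: any two of them are nested or disjoint, and $C'$ has interval strictly inside that of $C$ exactly when $C'$ is extracted \emph{during} $C$'s construction, in which case $v_C$ sits on the stack throughout $C'$'s construction. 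This follows by tracking the stack between consecutive edges of $C$: if $w_1,\dots,w_m$ are the stack-vertices above $v_C$ at time $t_C$, pushed at steps $\tau_1<\dots<\tau_m$, then (by the top-invariant) between the push of $w_\ell$ and the push of $w_{\ell+1}$ the walk leaves $w_\ell$ and returns to $w_\ell$, performing a closed sub-walk whose extracted cycles are all output strictly inside the ``gap'' $(\tau_\ell,\tau_{\ell+1})\subseteq(\min C,\max C)$; similarly between $\tau_m$ and $t_C$. Hence every other cycle is either such a descendant of $C$ (interval strictly inside), or is output before $\min C$ (interval entirely to the left), or is first touched after $\max C$ (interval entirely to the right), or properly contains $C$; comparing the intervals in each case gives laminarity.

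\noindent The crux is then a short observation about nesting. Suppose the interval of $C'$ lies strictly inside that of $C$; as just noted, $v_C$ remains on the stack continuously from before $v_{C'}$ is pushed until $C'$ is output. I claim $v_C\notin V(C')$. Indeed $v_C\ne v_{C'}$, since $v_{C'}$ is pushed at a step when $v_C$ is already present; so if $v_C$ were a vertex of $C'$ it would have to be pushed at some step during $C'$'s construction — but $v_C$ is on the stack at that step, so the algorithm would instead pop back to $v_C$, destroying $v_{C'}$'s position on the stack and preventing $C'$ from being output as it was, a contradiction. With this in hand the lemma is immediate: if a simple cycle $s$ occurs more than once, take two occurrences $C,C'$; by laminarity their intervals are nested or disjoint, but nesting is impossible because $V(C)=V(C')=V(s)$ contradicts $v_C\notin V(C')$ (or $v_{C'}\notin V(C)$); so the intervals are disjoint, hence linearly ordered, and for consecutive occurrences the last edge of one precedes the first edge of the next — exactly the asserted property. (As a byproduct, no descendant of an occurrence of $s$ is itself an occurrence of $s$, so the occurrences really do appear one cleanly after another as the walk is traversed.)

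\noindent The step I expect to be the main obstacle is the second paragraph: making the passage to the stack formulation and the laminarity of the position intervals fully rigorous requires a careful, if routine, bookkeeping of the stack dynamics — in particular the facts that between consecutive edges of an extracted cycle the walk forms a closed sub-walk based at the appropriate vertex, and that the output step of a cycle lies strictly inside a gap of any cycle properly containing it. Once this structure is in place, the crux and the deduction of the lemma are brief.
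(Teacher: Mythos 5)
Your proposal is correct, but it takes a genuinely different route from the paper's. The paper gives a short complete induction on the length of the closed walk: the first extracted simple cycle $s_1 = a_i\cdots a_j$ is taken at the \emph{first} repetition, so the initial segment $a_0,\dots,a_{i-1}$ contains no vertex of $s_1$; consequently no edge of any later instance of $s_1$ in the reduced walk sits at a position $\leq i$, so the first instance finishes before any other begins, and the inductive hypothesis (applied to the reduced walk) handles both the remaining instances of $s_1$ and every other simple cycle. You instead recast the recursion as a one-pass stack algorithm, attach to each extracted cycle the interval spanned by its edge positions, prove that these intervals form a laminar family, and observe that when $C'$ nests strictly inside $C$ the base $v_C$ lies strictly below $v_{C'}$ on the stack at the moment $C'$ is output, hence is not popped and so $v_C\notin V(C')$ — which rules out nesting between two occurrences of the same simple cycle and forces their intervals to be disjoint. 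Both arguments are sound. The paper's induction is shorter and needs no auxiliary machinery; yours establishes the stronger structural fact that \emph{all} extracted-cycle intervals are pairwise nested-or-disjoint (not merely that repeated instances don't interleave), at the price of the stack bookkeeping you correctly flag as the main burden — in particular, the invariant that each vertex popped at time $t_C$ occupies a fixed position directly above $v_C$ from its push time until $t_C$, which is what makes both the gap/laminarity argument and the base-vertex criterion go through.
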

\begin{proof}
Fixing the graph under consideration, we use complete induction on the length of the walk.
The shortest possible length of a closed walk is the length of the
shortest simple cycle; for walks of this length the result is trivial.

For the inductive step, we must show that if the result is true for closed walks of length up to $N-1$ then it is
also true for closed walks of length $N$. So consider a closed walk of length $N$ whose vertices (in order) are
$a_0, a_1, \ldots , a_N$ with $a_N = a_0$. Consider (as in the proof of Lemma \ref{walkcycles}) the beginning of the walk, up
to the first time a vertex is repeated:
$$
a_0  \ldots a_i  \ldots a_j \ldots a_N \quad \mbox{where $a_j = a_i$ is the first repetition}.
$$
Then $s = a_i  \ldots a_j$ is a simple cycle, and it is the first cycle in the decomposition of the walk. The
rest of the decomposition is obtained by considering the walk that remains after removal of this cycle, namely
$a_0 \ldots a_{i-1} , a_j , a_{j+1} \ldots a_N$ and applying the same argument (repeatedly). The inductive hypothesis applies
to this shortened walk.

For simple cycles other than $s$, the inductive hypotheses assures us that each instance is complete before the next begins,
as one traverses the shortened walk. Therefore the same is true of the original walk.

As for the simple cycle $s$: since $a_i = a_j$ was the first repeated node, none of the vertices $a_0 \ldots a_{i-1}$ participate
in $s$. Therefore in the decomposition of the shortened walk, no instances of $s$ are begun during the initial segment
$a_0 \ldots a_{i-1}$. So the first instance of $s$ is completed before the others begin; and by the inductive hypothesis, each
other instance of $s$ is completed before the next begins.
\end{proof}

\subsection{The upper bound, when $\varphi$ is regular} \label{sec5-1}
Let $u^\varepsilon(t,m,\xi,\eta)$ be
defined by the dynamic programming principle \eqref{DPP} with final value
$u^\varepsilon (T,m,\xi,\eta) = \varphi(\xi,\eta)$ (it is defined only for times $t$ such
that $(T-t)/\varepsilon^2$ is an integer); and let $u(t,\xi, \eta)$ be the solution of the PDE
\begin{align} \label{PDEsharpDU}
\begin{split}
&u_t+\frac{1}{2} C^\#_d\langle D^2 u \frac{\nabla^\bot u}{ u_{\eta}},
\frac{\nabla^\bot u}{ u_{\eta}} \rangle = 0,  \\
& u(T,\xi,\eta) = \varphi(\xi, \eta),
\end{split}
\end{align}
where $C^\#_d$ is the optimal value of the investor's linear program \eqref{LP1m}.
Our goal is to prove
\begin{theorem} \label{Dup}
Let $d\geq 1$ be an integer and assume the solution $u$ of \eqref{PDEsharpDU} satisfies
\eqref{require-u-smooth}--\eqref{require-time-deriv-neg}.
Then there is a constant $C$ (independent of $\varepsilon$, $t$, and $T$) such that
\begin{equation} \label{upper-bound-statement-d}
u^\varepsilon(t, m, \xi, \eta)\leq u(t,\xi,\eta)+C[(T-t) + \varepsilon]\varepsilon
\quad \mbox{for $t < T$, $\xi \in \mathbb{R}$, $\eta \in \mathbb{R}$, and $m\in\{0,1\}^d$}
\end{equation}
whenever $\varepsilon$ is small enough and $t$ is such that $N=(T-t)/\varepsilon^2$ is an integer.
\end{theorem}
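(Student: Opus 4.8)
The plan is to follow the proof of Theorem~\ref{1up} as closely as possible, replacing the ``indifference'' choice \eqref{f-when-d=1} of the correction $f^\#_m$ by the choice dictated by the investor's linear program. Fix an optimal point $(\{\beta_m\}_{m=0}^{2^d-1},\,C^\#_d)$ of \eqref{LP1m}. To bound $u^\varepsilon(t_0,m_0,\xi_0,\eta_0)$, build a sequence $(t_k,m_k,\xi_k,\eta_k)$, $0\le k\le N$, exactly as in Theorem~\ref{1up}: at step $k$ the investor plays $f_k=f^*_{t_k,m_k}+\varepsilon f^\#_{t_k,m_k}$ with $f^*$ given by \eqref{f^*m} and $f^\#_{t_k,m_k}=\tfrac{D_k}{2u_\eta}\beta_{m_k}$ (using \eqref{f_m^sharp-vs-beta_m}, with $D_k$ and $u_\eta$ evaluated at $(t_k,\xi_k,\eta_k)$), and then $b_k$ is a market move achieving the max in \eqref{DPP-after-choosing-f}. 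As in Section~\ref{sec2-5}, $f_k$ is admissible for all small $\varepsilon$: $f^*_{t_k,m_k}$ is a convex combination of $q(m_k)$ and $r(m_k)$ with a uniform gap from $\pm1$ (by \eqref{cond-on-experts}, \eqref{require-eta-deriv-pos}, \eqref{require-ineq-btwn-xi-and-eta-derivs}), and $|f^\#_{t_k,m_k}|$ is uniformly bounded (by the bound on $D$ and \eqref{require-eta-deriv-pos}). By construction $u^\varepsilon$ is non-decreasing along the sequence and $u^\varepsilon(t_N,m_N,\xi_N,\eta_N)=\varphi(\xi_N,\eta_N)=u(t_N,\xi_N,\eta_N)$, so it remains only to prove the near-constancy estimate $u(t_N,\xi_N,\eta_N)-u(t_0,\xi_0,\eta_0)\le C[(T-t)+\varepsilon]\varepsilon$.

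Taylor expansion of $u$ (using \eqref{require-u-smooth}), exactly as in Section~\ref{sec2-4}, gives for the increments $U_k=u(t_{k+1},\xi_{k+1},\eta_{k+1})-u(t_k,\xi_k,\eta_k)$
$$
U_k=\varepsilon^2 L(t_k,m_k,b_k,\xi_k,\eta_k,f^\#_{t_k,m_k})+O(\varepsilon^3)
 =\varepsilon^2 D_k\bigl[(q(m_k)-r(m_k))^2-b_k\beta_{m_k}-C^\#_d\bigr]+O(\varepsilon^3),
$$
where the second equality uses the definition \eqref{defn-of-L} of $L$, the relation \eqref{f_m^sharp-vs-beta_m}, and the PDE \eqref{PDEsharpDU} rewritten as $u_t+C^\#_d D=0$. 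Two facts now drive the argument. First, $u_t+C^\#_d D=0$ together with \eqref{require-time-deriv-neg} gives $D_k=-u_t/C^\#_d\ge0$ (here $C^\#_d>0$ by Lemma~\ref{lemma:investor-LP}(a) and \eqref{q-and-r-distinct}); this is precisely where hypothesis \eqref{require-time-deriv-neg} enters, and it is needed because for general $d$ we only have the inequality \eqref{LP1m}, not indifference. Second, the feasibility constraint of \eqref{LP1m} for a simple cycle~$i$ reads $\sum_{m\in\,\mathrm{cycle}\,i}[(q(m)-r(m))^2-a_{i,m+1}\beta_m-C^\#_d]\le0$ (see \eqref{first-cols-of-A}--\eqref{entries-of-g}), and since one traversal of cycle $i$ uses, at vertex $m$, the edge of sign $a_{i,m+1}$, this says exactly that the bracketed coefficients appearing in the $U_k$ along one full traversal of any simple cycle sum to something $\le 0$.

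To use this, decompose the walk $m_0\to m_1\to\cdots\to m_N$ into simple cycles plus a residual simple path of length at most $2^d-1$, via the constructive argument behind Lemma~\ref{walkcycles}. The residual path contributes only $O(2^d\varepsilon^2)=O(\varepsilon^2)$ to $\sum_k U_k$, since each $|\varepsilon^2 L|$ is $O(\varepsilon^2)$. For a simple-cycle instance $c$ (a copy of some simple cycle $i$), pick a representative step $k^*_c\in c$; then by the cycle constraint and $D_k\ge0$,
$$
\sum_{k\in c}\varepsilon^2 D_k\bigl[(q(m_k)-r(m_k))^2-b_k\beta_{m_k}-C^\#_d\bigr]\le\varepsilon^2\sum_{k\in c}(D_k-D_{k^*_c})\bigl[\cdots\bigr]\le C\varepsilon^2\sum_{k\in c}|D_k-D_{k^*_c}|.
$$
Since $D$ is globally Lipschitz in $(t,\xi,\eta)$ (by \eqref{require-u-smooth} and \eqref{require-eta-deriv-pos}) and consecutive sequence points differ by $O(\varepsilon)$ in space and $O(\varepsilon^2)$ in time, $|D_k-D_{k^*_c}|\le C\varepsilon|k-k^*_c|$, so the above is $\le C\varepsilon^3|c|\,\mathrm{diam}(c)$ with $\mathrm{diam}(c)=\max c-\min c$. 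Adding the $N$ Taylor errors (each $O(\varepsilon^3)$, summing to $O((T-t)\varepsilon)$), the residual-path contribution, and these cycle contributions yields
$$
\sum_{k=0}^{N-1}U_k\le C(T-t)\varepsilon+C\varepsilon^2+C\varepsilon^3\sum_c|c|\,\mathrm{diam}(c),
$$
so the theorem reduces to showing $\sum_c|c|\,\mathrm{diam}(c)=O_d(N)$.

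This last estimate is the main obstacle; it is the general-$d$ substitute for the easy pairing argument \eqref{same-number}--\eqref{wandering-of-D} available when $d=1$. Because every simple cycle of the de Bruijn graph has at most $2^d$ edges, it suffices to bound the total ``nesting depth'' $\sum_c\mathrm{diam}(c)=\sum_k\#\{c:\min c\le k\le\max c\}$ by a constant (depending only on $d$) times $N$; equivalently, to bound the number of cycle instances whose index-interval contains a given step. Here Lemma~\ref{walkcycles-refined} is essential: it guarantees that repeated copies of one simple cycle are completed one before the next begins, so such copies do not nest inside one another; combining this with the fact that a chain of strictly nested \emph{distinct} simple cycles forces a strictly growing path of distinct ``base'' vertices in the graph, hence has bounded length, one bounds the nesting depth at every step by a constant depending only on $d$. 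Consequently $\sum_c|c|\,\mathrm{diam}(c)\le 2^d\sum_c\mathrm{diam}(c)\le C_d N$, so $\varepsilon^3\sum_c|c|\,\mathrm{diam}(c)\le C_d(T-t)\varepsilon$, and the near-constancy estimate — hence the theorem — follows. The only smallness requirement on $\varepsilon$ is the admissibility of $f_k$, and all constants are uniform in $t$ and $T$.
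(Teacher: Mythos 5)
Your proposal is correct and matches the paper's proof in all essential respects: same LP-derived choice of $f^\#_m$ via \eqref{f_m^sharp-vs-beta_m}, same Taylor expansion giving $U_k=\varepsilon^2D_k\bigl[(q(m_k)-r(m_k))^2-b_k\beta_{m_k}-C^\#_d\bigr]+O(\varepsilon^3)$, same use of LP feasibility together with $D\ge0$ (which is where \eqref{require-time-deriv-neg} enters) to make each cycle's contribution nonpositive up to the drift of $D$, and the same reliance on Lemma~\ref{walkcycles-refined} to control the accumulated drift. The two departures are organizational rather than substantive. First, you decompose the open walk into simple cycles plus a residual simple path of at most $2^d-1$ steps (cost $O(\varepsilon^2)$), whereas the paper appends part of an Eulerian circuit (at most $2^{d+1}$ steps, with time frozen at $T$) to obtain a closed walk and then invokes Lemma~\ref{walkcycles}; both work. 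Second, you bound $\sum_c|c|\,\mathrm{diam}(c)$ by $O_d(N)$ via a nesting-depth argument, while the paper fixes a simple cycle $s_i$ and an in-cycle index $j$ and sums $\sum_\alpha(t^\alpha_{i,j}-t^\alpha_{i,1})\le T-t_0$ over instances $\alpha$ using \eqref{ordering-of-times}; these are the same estimate with the order of summation rearranged, and both hinge on Lemma~\ref{walkcycles-refined}. One small remark: the cleanest route to your nesting-depth bound is to observe that Lemma~\ref{walkcycles-refined} makes the index intervals of instances of any one fixed simple cycle essentially disjoint, so at most one such instance can cover a given step $k$; summing over the finitely many ($d$-dependent) simple cycle types bounds the depth directly. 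Your auxiliary claim about a ``strictly growing path of distinct base vertices'' is not needed for this and is harder to make precise, but the conclusion it supports is correct, so this is a cosmetic point rather than a gap.
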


\begin{proof}
Our overall strategy is the same as for Theorem \ref{1up}:
to estimate $u^\varepsilon(t_0,m_0,\xi_0,\eta_0)$, we shall
define a sequence $(t_k,m_k,\xi_k, \eta_k)$ along which $u^\varepsilon$ is monotone
\begin{equation} \label{chain10}
u^\varepsilon(t_0, m_0, \xi_0, \eta_0) \leq  u^\varepsilon(t_1, m_1, \xi_1, \eta_1)\leq
\dots \leq u^\varepsilon(t_N, m_N, \xi_N, \eta_N)
\end{equation}
with $t_N=T$, so that
\begin{equation} \label{chain11}
u^\varepsilon(t_N, m_N, \xi_N, \eta_N)=\varphi(\xi_N, \eta_N)=u(t_N, \xi_N, \eta_N).
\end{equation}
Then we'll show that
\begin{equation} \label{chain12}
u(t_N,\xi_N, \eta_N) - u(t_0, \xi_0,\eta_0) \leq C[ (T-t_0) + \varepsilon ] \varepsilon.
\end{equation}
These estimates lead immediately to \eqref{upper-bound-statement-d}.

Our choice of $\{(t_k,m_k,\xi_k,\eta_k)\}_{k=1}^N$ is entirely parallel to what was done for
Theorem \ref{1up}. It suffices to explain the choice of $(t_1,m_1,\xi_1,\eta_1)$ (then the rest
of the sequence is determined similarly, step by step). Recall that the dynamic programming
principle was written compactly in equations \eqref{defn-v}--\eqref{DPP-compact}, and it gave
\begin{equation} \label{DPP-after-choice-of-f-bis}
u^\varepsilon(t_0, m_0, \xi_0,\eta_0) \leq \max_{b_0=\pm 1}
u^\varepsilon(t_0+\varepsilon^2, m_{b_0}, \xi_0 +\varepsilon b_0 v^1, \eta_0 + \varepsilon b_0 v^2)
\end{equation}
where $(v^1, v^2)$ are determined by the investor's choice of $f$ via \eqref{defn-v}. Our choice of
$f = f_{t_0, m_0}^* + \varepsilon f_{t_0,m_0}^\#$ is guided by the heuristic discussion in
Section \ref{sec2-4} (which determines $f_{t_0, m_0}^*$) combined with the investor's linear program
(which determines $f_{t_0,m_0}^\#$ via \eqref{f_m^sharp-vs-beta_m}):
\begin{equation} \label{f-when-d}
\begin{split}
f_{t_0,m_0} &= f^*_{t_0,m} + \varepsilon f^\#_{t_0,m_0}, \ \mbox{with}\\
f^*_{t_0,m_0} & =  \frac{(q(m_0)-r(m_0))u_{\xi}+(q(m_0)+r(m_0)) u_{\eta}}{2u_{\eta}} \ \mbox{and} \\
f^\#_{t_0,m_0} & = \frac{D}{2 u_\eta} \beta_{m_0}.
\end{split}
\end{equation}
Here $u_\xi$, $u_\eta$, and
$D = \half \langle D^2u \cdot \frac{\nabla^\bot u}{u_{\eta}}, \frac{\nabla^\bot u}{u_{\eta}}\rangle$ are evaluated
at $(t_0, \xi_0, \eta_0)$, while $\beta_{m_0}$ comes from the solution of the investor's linear program \eqref{LP1m}.
(If the linear program has more than one solution, we choose one and use it throughout the proof.) The proper choice
of $(t_1,m_1,\xi_1,\eta_1)$ is now clear: taking $b_0$ to achieve the max on the RHS of
\eqref{DPP-after-choice-of-f-bis}, the choice $t_1 = t_0 + \varepsilon^2$, $m_1=(m_0)_{b_0}$,
$\xi_1= \xi_0 + \varepsilon b_0 v^1$, $\eta_1 = \eta_0 + \varepsilon b_0 v^2$ satisfies the desired inequality
$u^\varepsilon(t_0, m_0, \xi_0,\eta_0) \leq u^\varepsilon(t_1, m_1, \xi_1,\eta_1)$. The rest of the sequence
$(t_k, m_k, \xi_k, \eta_k)$ is determined similarly, up to $k=N$, when $t_N = T$.

We now depart a bit from the proof of Theorem \ref{1up}. The sequence $m_0, m_1, \ldots, m_N$ is a walk
on our directed graph, but it is not in general closed (since we cannot expect that $m_N = m_0$). It can, however,
be extended to a closed walk by adding some steps at the end. Indeed, as already noted in
Section \ref{LP} our graph is Eulerian, i.e. there is a closed walk (an Eulerian circuit) that traverses
each edge exactly once; the  desired extension can be achieved by adding part of an Eulerian circuit,
starting from $m_N$ and stopping upon arrival at $m_0$. We let $N'$ be the final index of the extended walk
(so $m_{N'} = m_0$); note that the number of extra steps is bounded by the total number of edges in the graph:
\begin{equation} \label{controlled-number-of-extra-steps}
N' - N \leq 2^{d+1}.
\end{equation}
The expression $u^\varepsilon(t_k, m_k, \xi_k,\eta_k)$ is undefined for $k>N$. However it is convenient to introduce
a suitable choice of $(t_k, \xi_k, \eta_k)$ for $k>N$, so that $u(t_k, \xi_k, \eta_k)$ will make sense.
We take the time increments to be trivial: $t_k = t_N = T$ for $k>N$. For the spatial increments we use
the same definition as for $k < N$: defining $b_k = \pm 1$ for $k \geq N$ by $(m_k)_{b_k} = m_{k+1}$, we set
$$
\xi_{k+1}= \xi_k + \varepsilon b_k v^1, \quad \eta_{k+1} = \eta_k + \varepsilon b_k v^2
$$
for $k \geq N$, where $(v^1, v^2)$ are determined by the investor's choice of $f$ via \eqref{defn-v}, the choice of $f$
still being given by \eqref{f-when-d} (with $t_0$ and $m_0$ replaced by $t_k $ and $m_k$, and with
$u_\xi$, $u_\eta$, and $D = \half \langle D^2u \cdot \frac{\nabla^\bot u}{u_{\eta}}, \frac{\nabla^\bot u}{u_{\eta}}\rangle$
evaluated at $(t_k, \xi_k, \eta_k)$). This choice has the key feature that when the increment
$$
U_k = u(t_{k+1},\xi_{k+1},\eta_{k+1}) - u(t_k,\xi_k,\eta_k)
$$
is estimated for $k \geq N$ by Taylor expansion, there is no $u_t$ term (since $t_{k+1}=t_k$) and
the first-order terms involving the increments of $\xi$ and $\eta$ vanish
(see \eqref{TaylorMinMax}--\eqref{value-of-B}); as a result, we have
\begin{equation} \label{increments-beyond-N}
|U_k| \leq C \varepsilon^2 \quad \mbox{for $k \geq N$}.
\end{equation}
As a reminder, we also have a convenient estimate for the increments at $k < N$:
\begin{equation} \label{increments-before-N}
U_k = \varepsilon^2 L(t_k, m_k, b_k, \xi_k, \eta_k, f^\#_{t_k,m_k}) + O(\varepsilon^3)
\end{equation}
where $L$ is defined by \eqref{defn-of-L}.

We now start the proof of \eqref{chain12}. In view of \eqref{controlled-number-of-extra-steps} and
\eqref{increments-beyond-N}, it suffices to prove that
$$
u(t_{N'},\xi_{N'}, \eta_{N'}) - u(t_0, \xi_0,\eta_0) \leq C[ (T-t_0) + \varepsilon ] \varepsilon.
$$
Since the walk $m_0, \ldots, m_{N'}$ is closed, it is a union of simple cycles by Lemma \ref{walkcycles}.
Now,
\begin{equation} \label{sum-of-all-increments}
u(t_{N'},\xi_{N'}, \eta_{N'}) - u(t_0, \xi_0,\eta_0) = \sum_{k=0}^{N'-1} U_k,
\end{equation}
and each increment is associated with a step of our closed walk, so RHS of \eqref{sum-of-all-increments} can be
reorganized using the walk's cycle decomposition.

The total contribution of all cycles that involve the extended part of the walk (i.e., cycles that
include a node $m_k$ with $k > N$) is of order $\varepsilon^2$, since the number of such cycles is finite
(with an estimate depending only on $d$) and each instance of any cycle makes a contribution of order
$\varepsilon^2$. We shall refer to the cycles that don't involve the extended part of the walk as
the {\it remaining cycles}.

The $O(\varepsilon^3)$ error terms in \eqref{increments-before-N} don't bother us, since
they accumulate to an error of at most $C (T-t) \varepsilon$. Thus to prove \eqref{chain12} we need only show that
\begin{equation} \label{essential-task-ub}
\begin{split}
&\mbox{the terms $\varepsilon^2 L(t_k, m_k,b_k,\xi_k,\eta_k,f^\#_{t_k,m_k})$ coming from the}\\[-1ex]
&\mbox{remaining cycles sum to at most $C(T-t) \varepsilon$.}
\end{split}
\end{equation}
Let $\{ s_i \}_{i=1}^n$ be a list of the simple cycles on the graph; we shall (as usual) write $|s_i|$ for the number of edges
in $s_i$. Suppose that among the remaining cycles, $s_i$ appears $\sigma_i$ times.
We sum $L(t_k, m_k,b_k,\xi_k, \eta_k,f^\#_{t_k,m_k})$ over the remaining cycles in stages: first
over the edges of an instance $\alpha$ of a cycle $s_i$, then over the $\sigma_i$ instances of this cycle,
then over all the distinct cycles $s_1, \dots, s_n$. Thus, the quantity to be estimated is
\begin{equation} \label{Lsum}
\varepsilon^2 \sum_{\substack{{\rm remaining}\\{\rm cycles}}} L(t_k, m_k,b_k,\xi_k, \eta_k,f^\#_{t_k,m_k})
= \varepsilon^2 \sum_{i=1}^{n}\sum_{\alpha=1}^{\sigma_i}\sum_{j=1}^{|s_i|}L (t^\alpha_{i,j}, m^\alpha_{i,j},b^\alpha_{i,j},\xi_{i,j}^\alpha,\eta_{i,j}^\alpha,f^{\#}_{t^\alpha_{i,j},m^ \alpha_{i,j}}).
\end{equation}
Here the time steps have been relabeled using the cycle decomposition: $t^\alpha_{i,j}$ is the time when
instance $\alpha$ of cycle $s_i$ takes its $j$th step, $j=1,\ldots,|s_i|$. Note that by Lemma
\ref{walkcycles-refined}, for each $i$ and each $\alpha < \sigma_i$,
\begin{equation} \label{ordering-of-times}
t^\alpha_{i,1} < t^\alpha_{i,2} < \ldots < t^\alpha_{i,|s_i|} <
t^{\alpha +1}_{i,1} < t^{\alpha +1}_{i,2} < \ldots < t^{\alpha + 1}_{i,|s_i|}.
\end{equation}

In formulating our linear program we treated $u_t$, $u_\eta$, etc. as being constant, whereas in fact
they vary with space and time. To deal with this, we need to compare
$L(t^\alpha_{i,j}, m^\alpha_{i,j},b^\alpha_{i,j},\xi_{i,j}^\alpha,
\eta_{i,j}^\alpha,f^{\#}_{t^\alpha_{i,j},m^ \alpha_{i,j}})$
to the analogous quantity
$L(t^\alpha_{i,1}, m^\alpha_{i,j},b^\alpha_{i,j},\xi_{i,1}^\alpha,
\eta_{i,1}^\alpha,f^{\#}_{t^\alpha_{i,1},m^\alpha_{i,j}})$
obtained by freezing $u_t$, $u_\eta$, etc to their
values at time $t^\alpha_{i,1}$ when the $\alpha$th instance of the $i$th cycle begins. Combining
the definition \eqref{defn-of-L} of $L$ with the choice \eqref{f-when-d} of $f_{t,m}^\#$, we have
$$
L(t,m,b,\xi,\eta,f_{t,m}^\#) = u_t + (q(m)-r(m))^2 D - b \beta_m D,
$$
in which $u_t$ and
$D = \half \langle D^2u \cdot \frac{\nabla^\bot u}{u_{\eta}}, \frac{\nabla^\bot u}{u_{\eta}}\rangle$
are evaluated at $(t,\xi, \eta)$. Our hypotheses \eqref{require-u-smooth}--\eqref{require-eta-deriv-pos}
assure that $u_t$ and $D$ are uniformly Lipschitz continuous functions of $t$, $\xi$, and $\eta$. Since
the number of time steps from $t^\alpha_{i,1}$ to $t^\alpha_{i,j}$ is
$(t^\alpha_{i,j} - t^\alpha_{i,1})/\varepsilon^2$ and the change in $(t,\xi,\eta)$ is of order $\varepsilon$
at each time step,
\begin{equation} \label{Lo}
L(t^\alpha_{i,j}, m^\alpha_{i,j},b^\alpha_{i,j},\xi_{i,j}^\alpha,\eta_{i,j}^\alpha,f^{\#}_{t^ \alpha_{i,j},m^ \alpha_{i,j}})
=L(t^\alpha_{i,1}, m^\alpha_{i,j},b^\alpha_{i,j},\xi_{i,1}^\alpha,\eta_{i,1}^\alpha,f^{\#}_{t^ \alpha_{i,1},m^\alpha_{i,j}}) +
\frac{t^\alpha_{i,j}-t^\alpha_{i,1}}{\varepsilon^2}O(\varepsilon).
\end{equation}
Focusing on the terms associated with a particular instance of a particular cycle (thus, fixing $\alpha$ and $i$), we conclude
that
\begin{multline} \label{chain8}
\sum_{j=1}^{|s_i|}
L(t^\alpha_{i,j}, m^\alpha_{i,j},b^\alpha_{i,j},\xi_{i,j}^\alpha,\eta_{i,j}^\alpha,f^{\#}_{t^ \alpha_{i,j},m^ \alpha_{i,j}})\\
= \sum_{j=1}^{|s_i|}
L(t^\alpha_{i,1}, m^\alpha_{i,j},b^\alpha_{i,j},\xi_{i,1}^\alpha,\eta_{i,1}^\alpha,f^{\#}_{t^ \alpha_{i,1},m^ \alpha_{i,j}})
+ \sum_{j=1}^{|s_i|}\varepsilon^{-2}(t^\alpha_{i,j}-t^\alpha_{i,1}) O(\varepsilon)\\
=|s_i|\left[
u_t + D^\alpha_{i,1} \sum_{j=1}^{|s_i|} \frac{1}{|s_i|}
\left[(q(m^\alpha_{i,j})-r(m^\alpha_{i,j}))^2 -b^\alpha_{i,j}\beta_{m^\alpha_{i,j}} \right]
\right] +
\sum_{j=1}^{|s_i|} (t^\alpha_{i,j}-t^\alpha_{i,1}) O( \varepsilon^{-1})
\end{multline}
where $D^\alpha_{i,1}$ is the value of
$\frac{1}{2}\langle D^2u \cdot \frac{\nabla^\bot u}{u_{\eta}}, \frac{\nabla^\bot u}{u_{\eta}}\rangle)$
evaluated at $(t^\alpha_{i,1},\xi_{i,1}^\alpha,\eta_{i,1}^\alpha)$, and $u_t$ is also evaluated at this location.
We come now to the {\it key point}:
\begin{equation} \label{key-point-ub}
\sum_{j=1}^{|s_i|}\frac{1}{|s_i|}
\left[(q(m^\alpha_{i,j})-r(m^\alpha_{i,j}))^2 -b^\alpha_{i,j}\beta_{m^\alpha_{i,j}} \right] \leq C_d^\# ,
\end{equation}
{\it since} $(\beta^0, \ldots, \beta_{2^d -1}, C_d^\#)$ {\it is a feasible point for the investor's linear program.}
Our hypothesis that $u_t \leq 0$ and the PDE \eqref{PDEsharpDU} assure that $D^\alpha_{i,1} \geq 0$; so
\eqref{chain8} is bounded above by
\begin{equation} \label{key-point-ub-bis}
|s_i|[u_t + \frac{C^\#_d}{2}\langle D^2 u \frac{\nabla^\bot u}{u_{\eta}},
\frac{\nabla^\bot u}{u_{\eta}}\rangle]+ \sum_{j=1}^{|s_i|} (t^\alpha_{i,j}-t^\alpha_{i,1})O(\varepsilon^{-1}).
\end{equation}
Since the first term vanishes by \eqref{PDEsharpDU}, we have shown that
\begin{equation} \label{chain8a}
\sum_{j=1}^{|s_i|}
L(t^\alpha_{i,j}, m^\alpha_{i,j},b^\alpha_{i,j},\xi_{i,j}^\alpha,\eta_{i,j}^\alpha,f^{\#}_{t^ \alpha_{i,j},m^ \alpha_{i,j}}) \leq
\sum_{j=1}^{|s_i|} (t^\alpha_{i,j}-t^\alpha_{i,1})O(\varepsilon^{-1})
\end{equation}
for every $i$ and $\alpha$.

To finish, we now sum over $i$ and $\alpha$ and change the order of summation:
$$
\mbox{value of \eqref{Lsum}} \leq \varepsilon^2\sum_{i=1}^{n}\sum_{\alpha=1}^{\sigma_i}\sum_{j=1}^{|s_i|}(t^\alpha_{i,j}-t^\alpha_{i,1})O(\varepsilon^{-1})
= \sum_{i=1}^{n} \sum_{j=1}^{|s_i|} \sum_{\alpha=1}^{\sigma_i}(t^\alpha_{i,j}-t^\alpha_{i,1})O(\varepsilon).
$$
By \eqref{ordering-of-times} this is at most
$$
\sum_{i=1}^{n} \sum_{j=1}^{|s_i|}[T-t_0] O(\varepsilon).
$$
Since $n$ (the number of cycles) and $\max_i |s_i|$ (the maximum length of a cycle) are constants (depending only on $d$), we
conclude as desired that
$$
\mbox{value of \eqref{Lsum}} \leq C(T-t_0) \varepsilon ,
$$
where $C$ depends only on $d$ and the constants implicit in our hypotheses on $u$, \eqref{require-u-smooth}--\eqref{require-time-deriv-neg}.
\end{proof}

\begin{remark} \label{lb-proof-uses-only-feasibility}
The only property of $C^\#_d$ that we used in the proof was the
existence of $\{\beta_m \}$ such that $(\beta_0, \ldots, \beta_{2^d-1},C_d^\#)$ is a feasible point for the
investor's linear program (see \eqref{key-point-ub}). As a reminder, this means that for each cycle $s$ on the graph,
$$
\frac{1}{|s|} \sum_{
\substack{{\rm vertices} \, m \\ {\rm on} \, {\rm cycle} \, s}
} [(q(m) - r(m))^2 - b_{i,m} \beta_m] \leq C_d^\#.
$$
where $|s|$ is the number of edges in $s$ and $b_{i,m}$ was defined after \eqref{investor-lp-as-minmax}.
By choosing $C^\#_d$ to be the optimal value of the investor's linear program we make $C^\#_d$ as small as possible,
optimizing the resulting bound. But since our argument uses only feasibility (not optimality), {\it any} feasible
point for the investor's linear program determines an upper bound on $u^\varepsilon$. For example, to get an upper
bound that's worse than that of Theorem \ref{Dup} but somewhat more explicit, one can take $\beta_m = 0$
for all $m$ and replace $C^\#_d$ in \eqref{PDEsharpDU} by
$$
\max_{s \in \{ \rm cycles \}} \frac{1}{|s|} \sum_{
\substack{{\rm vertices} \, m \\ {\rm on} \, {\rm cycle} \, s}
} (q(m) - r(m))^2 .
$$
\end{remark}

\begin{remark} \label{indifference-implies-sign-condition-not-needed}
The hypotheses of Theorem \ref{Dup} include that $u_t \leq 0$, an assumption we didn't need when $d=1$. It was needed for the
passage from \eqref{key-point-ub} to \eqref{key-point-ub-bis}. If there is a strategy that achieves indifference (in the sense
of Lemmas \ref{lemma:investor-LP}(c) and \ref{lemma:market-LP}(c)) then the inequality in \eqref{key-point-ub} becomes an
equality and the sign of $u_t$ becomes irrelevant. As a reminder: we showed in Section \ref{sec4-4} that indifference is
achievable for $d \leq 4$.
\end{remark}

\subsection{The lower bound, when $\varphi$ is regular} \label{sec5-2}

For our lower bound on $u^\varepsilon$, the function $u$ solves a PDE similar to that used for the lower bound, but with
a different ``diffusion constant'': throughout this subsection, $u(t,\xi, \eta)$ is the solution of the PDE

\begin{align} \label{PDEsharpDL}
\begin{split}
&u_t+\frac{1}{2} C^*_d\langle D^2 u \frac{\nabla^\bot u}{ u_{\eta}},
\frac{\nabla^\bot u}{ u_{\eta}} \rangle = 0,  \\
& u(T,\xi,\eta) = \varphi(\xi, \eta),
\end{split}
\end{align}
where $C^*_d$ is the optimal value of the market's linear program \eqref{LP2m}. Our goal is to prove

\begin{theorem} \label{Dlow}
Let $d\geq1$ be an integer, and assume the solution $u$ of \eqref{PDEsharpDL} satisfies
\eqref{require-u-smooth}--\eqref{require-time-deriv-neg}. Then there is a constant $C$ (independent
of $\varepsilon$, $t$, and $T$) such that
\begin{equation} \label{lower-bound-statement-d}
u^\varepsilon(t, m, \xi, \eta)\geq u(t,\xi,\eta)- C[(T-t) + \varepsilon]\varepsilon
\quad \mbox{for $t < T$, $\xi \in \mathbb{R}$, $\eta \in \mathbb{R}$, and $m\in\{0,1\}$}
\end{equation}
whenever $\varepsilon$ is small enough and $t$ is such that $N=(T-t)/\varepsilon^2$ is an integer.
\end{theorem}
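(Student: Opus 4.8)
The plan is to combine the market-strategy argument of Theorem \ref{1low} with the Eulerian-extension and cycle-bookkeeping machinery of Theorem \ref{Dup}. As for Theorem \ref{1low}, to bound $u^\varepsilon(t_0,m_0,\xi_0,\eta_0)$ from below I would build a sequence $(t_k,m_k,\xi_k,\eta_k)$ along which $u^\varepsilon$ is decreasing, cf.\ \eqref{chain10-bis}, reaching $t_N=T$ where $u^\varepsilon(t_N,m_N,\xi_N,\eta_N)=\varphi(\xi_N,\eta_N)=u(t_N,\xi_N,\eta_N)$. Each step is obtained by letting the investor choose optimally and then letting the market react according to the strategy \eqref{market-strategy-case1}--\eqref{market-strategy-case2}, except that now $f^{\#}_{t_k,m_k}=\tfrac{D}{2u_\eta}\beta_{m_k}$, with $D$ and $u_\eta$ evaluated at $(t_k,\xi_k,\eta_k)$ and $(\beta_0,\dots,\beta_{2^d-1},C^*_d)$ a fixed optimal solution of the market's linear program \eqref{LP2m}; this formula is well defined since $u_\eta$ is bounded below, and it vanishes harmlessly where $D=0$. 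The constant $\gamma$ is taken large enough --- the analogues of \eqref{gamma-condition1}--\eqref{gamma-condition2} for this $\beta$ --- that Case~1 of the market's strategy makes the first-order term dominate favorably. Then, exactly as in the derivation of \eqref{like-ub}, both cases yield
\begin{equation*}
U_k := u(t_{k+1},\xi_{k+1},\eta_{k+1})-u(t_k,\xi_k,\eta_k) \ \ge\ \varepsilon^2\, L\bigl(t_k,m_k,b_k,\xi_k,\eta_k,f^{\#}_{t_k,m_k}\bigr)+O(\varepsilon^3),
\end{equation*}
with $L$ as in \eqref{defn-of-L}; this is the one inequality whose direction matters here, and it holds because in Case~1 the extra positive first-order contribution swamps $\varepsilon^2 L$ while in Case~2 one recovers $\varepsilon^2 L$ exactly.

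Next, following Theorem \ref{Dup}, the walk $m_0,\dots,m_N$ need not be closed, so I would extend it to a closed walk $m_0,\dots,m_{N'}$ by appending part of an Eulerian circuit of the de Bruijn graph, with $N'-N\le 2^{d+1}$; for $N\le k<N'$ set $t_k=T$ and keep the spatial-increment rule, so $|U_k|\le C\varepsilon^2$ for those indices and their total contribution is $O(\varepsilon^2)$. By Lemma \ref{walkcycles} the closed walk is a union of simple cycles, and by Lemma \ref{walkcycles-refined} the repeated instances of each cycle occur one before the next, so the step-times within and across the instances of a fixed cycle satisfy an ordering of the form \eqref{ordering-of-times}. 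Cycles touching the appended part are finite in number and contribute $O(\varepsilon^2)$; for each remaining instance $\alpha$ of a cycle $s_i$ I would freeze $u_t$ and $D$ to their values at the start of that instance. Writing $L(t,m,b,\xi,\eta,f^{\#}_{t,m})=u_t+(q(m)-r(m))^2D-b\beta_m D$ and using the Lipschitz bounds on $u_t$ and $D$ supplied by \eqref{require-u-smooth}--\eqref{require-eta-deriv-pos}, the frozen sum over one instance is
\begin{equation*}
\sum_{j=1}^{|s_i|} L \ =\ |s_i|\Bigl[\,u_t+D^{\alpha}_{i,1}\cdot\tfrac{1}{|s_i|}\sum_{m\in s_i}\bigl((q(m)-r(m))^2-b_{i,m}\beta_m\bigr)\Bigr]+\sum_{j=1}^{|s_i|}(t^{\alpha}_{i,j}-t^{\alpha}_{i,1})\,O(\varepsilon^{-1}).
\end{equation*}
Here is the \emph{key point}, the mirror of \eqref{key-point-ub}: since $(\beta_0,\dots,\beta_{2^d-1},C^*_d)$ is feasible for the market's linear program \eqref{LP2m}, the inner average is $\ge C^*_d$; and since $u_t\le0$, the PDE \eqref{PDEsharpDL} gives $D\ge0$ (the argument being identical to the one in Theorem \ref{Dup}), so multiplying by $D^{\alpha}_{i,1}\ge0$ preserves the inequality, and the bracket is $\ge |s_i|\,(u_t+C^*_d D)=0$ by \eqref{PDEsharpDL} again. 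Hence each remaining instance contributes at least $-\sum_{j=1}^{|s_i|}(t^{\alpha}_{i,j}-t^{\alpha}_{i,1})\,O(\varepsilon^{-1})$.

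Finally I would collect the errors as in Theorem \ref{Dup}. The $O(\varepsilon^3)$ Taylor errors, $N=(T-t)/\varepsilon^2$ in number, sum to at most $C(T-t)\varepsilon$. Summing the freezing errors over $j$, over the instances of each $s_i$, and over $i$, then exchanging the order of summation and using \eqref{ordering-of-times} (so that, for each fixed cycle, the distances $t^{\alpha}_{i,j}-t^{\alpha}_{i,1}$ over all its instances fit inside a single interval of length $\le T-t_0$), one finds this total is also $\ge -C(T-t_0)\varepsilon$, with $C$ depending only on $d$ and the constants in \eqref{require-u-smooth}--\eqref{require-time-deriv-neg}. Adding the $O(\varepsilon^2)$ from the appended part gives $u(t_{N'},\xi_{N'},\eta_{N'})-u(t_0,\xi_0,\eta_0)\ge -C[(T-t)+\varepsilon]\varepsilon$, and since $u$ varies by $O(\varepsilon^2)$ between indices $N$ and $N'$, the same bound holds with $t_{N'}$ replaced by $t_N$; combining with $u^\varepsilon(t_0,m_0,\xi_0,\eta_0)\ge u^\varepsilon(t_N,m_N,\xi_N,\eta_N)=u(t_N,\xi_N,\eta_N)$ yields \eqref{lower-bound-statement-d}. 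The main obstacle is bookkeeping rather than conceptual: every inequality must point the right way --- in particular the multiplication of the linear-program inequality by $D$, which is legitimate precisely because $u_t\le0$ forces $D\ge0$ --- and the freezing errors must be controlled exactly as in Theorem \ref{Dup}, which is where the refinement Lemma \ref{walkcycles-refined} (and the resulting time-ordering) is essential.
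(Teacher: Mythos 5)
Your proposal matches the paper's proof of Theorem~\ref{Dlow} in essentially every respect: the paper itself contents itself with the terse statement that ``no new ideas are needed, beyond those introduced for Theorem~\ref{1low} and Theorem~\ref{Dup},'' and then cites the market's two-case strategy with $f^{\#}_{t,m}=\frac{D}{2u_\eta}\beta_m$ from the market's linear program, the inequality \eqref{inequalities-for-increments} for the increments, and the key feasibility inequality \eqref{key-point-lb} (mirroring \eqref{key-point-ub} with direction reversed); the Eulerian extension, Lemma~\ref{walkcycles-refined}, the freezing argument, the sign observation that $u_t\le 0$ together with \eqref{PDEsharpDL} gives $D\ge 0$ so that multiplying the LP inequality by $D^{\alpha}_{i,1}$ preserves it, and the final assembly of the error terms are all exactly what you wrote out. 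Your account is in fact more explicit than the paper's, filling in the details the authors defer to prior proofs; the only small imprecision --- saying that in the market's Case~2 one ``recovers $\varepsilon^2 L$ exactly'' when in fact one gets $U_k\ge\varepsilon^2 L+O(\varepsilon^3)$ with possible slack from the $-2bu_\eta X\ge 0$ term --- is harmless since the needed direction is $\ge$.
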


\begin{proof}
No new ideas are needed, beyond those introduced for the lower bound when $d=1$ (Theorem \ref{1low}) and the upper
bound for general $d$ (Theorem \ref{Dup}). Therefore we will be somewhat brief.

Our lower bound for $d=1$ was associated with a specific strategy for the market, which was summarized at the beginning
of Section \ref{sec3-2}. Our lower bound for general $d$ uses an analogous strategy. Briefly: the market identifies a specific
investor strategy of the form $f_m = f_m^* + \varepsilon f_m^\#$ by solving the market's linear program, and chooses the market's
movements to penalize the investor if she doesn't make that choice. In more detail: the market's strategy is organized around
the analogue of \eqref{f-when-d} obtained using the market's linear program rather than the investor's:
\begin{equation} \label{f-when-d-lb}
\begin{split}
f_{t,m} &= f^*_{t,m} + \varepsilon f^\#_{t,m}, \ \mbox{with}\\
f^*_{t,m} & =  \frac{(q(m)-r(m))u_{\xi}+(q(m)+r(m)) u_{\eta}}{2u_{\eta}} \ \mbox{and} \\
f^\#_{t,m} & = \frac{D}{2 u_\eta} \beta_{m}.
\end{split}
\end{equation}
where $\{\beta_{m} \}_{m=0}^{2^d -1}$ come from the solution of the market's linear program \eqref{LP1m}.
There are two cases, identical to \eqref{market-strategy-case1} and \eqref{market-strategy-case2}:
\bigskip

\noindent {\bf Case 1}: If the investor's choice doesn't nearly zero out the ``first-order term'' in the
Taylor-expansion-based estimate of the increment of $u$, then the market chooses $b$ to make that term positive;
quantitatively,
$$
\mbox{if the investor's choice $f$ has $|f-f_m^*| \geq \gamma \varepsilon$ then the market chooses $b$
so that $-b (f-f_m^*) \geq 0$.}
$$

\noindent {\bf Case 2:} When case 1 doesn't apply, it is convenient to express the
investor's choice $f$ as $f=f_m^* + \varepsilon f_m^\# + \varepsilon X$, where $f_m^*$ and $f_m^\#$ are
defined by \eqref{f-when-d-lb} (this relation defines $X$). The investor is more
optimistic than our conjectured optimal strategy if $X>0$, and more pessimistic if $X<0$.
In the former case the market makes the stock go down, and in the latter case it makes the stock
go up -- in each case giving the investor an unwelcome surprise; quantitatively:
$$
\mbox{if the investor's choice $f$ has $|f-f_m^*| < \gamma \varepsilon$ then the market chooses $b$
so that $-bX \geq 0$.}
$$

Under conditions on $\gamma$ analogous to \eqref{gamma-condition1} and \eqref{gamma-condition2}, an argument
entirely parallel to that of Theorem \ref{1low} produces a sequence $(t_k,m_k,\xi_k, \eta_k)$, starting from any given
$(t_0,m_0,\xi_0,\eta_0)$ and ending when $k=N=(T-t_0)/\varepsilon^2$ so that $t_N = T$, such that
\begin{equation} \label{chain10-lb}
u^\varepsilon(t_0, m_0, \xi_0, \eta_0) \geq  u^\varepsilon(t_1, m_1, \xi_1, \eta_1)\geq
\dots \geq u^\varepsilon(t_N, m_N, \xi_N, \eta_N)
\end{equation}
for which the increments of $u$ satisfy the analogue of \eqref{like-ub}:
\begin{equation} \label{inequalities-for-increments}
U_k = u(t_{k+1}, \xi_{k+1}, \eta_{k+1}) - u(t_k,\xi_k,\eta_k) \geq
\varepsilon^2 L(t_k,m_k,b_k,\xi_k,\eta_k,f_{t_k,m_k}^\#) + O(\varepsilon^3)
\end{equation}
Then arguments entirely parallel to the those used for Theorem \ref{Dup} show that
\begin{equation} \label{chain12-lb}
u(t_{N},\xi_{N}, \eta_{N}) - u(t_0, \xi_0,\eta_0) \geq - C[ (T-t_0) + \varepsilon ] \varepsilon.
\end{equation}
The analogue of \eqref{key-point-ub} in this setting is of course
\begin{equation} \label{key-point-lb}
\sum_{j=1}^{|s_i|}\frac{1}{|s_i|}
\left[(q(m^\alpha_{i,j})-r(m^\alpha_{i,j}))^2 -b^\alpha_{i,j}\beta_{m^\alpha_{i,j}} \right] \geq C_d^* ,
\end{equation}
which holds since $(\beta^0, \ldots, \beta_{2^d -1}, C_d^*)$ is a feasible point for the market's linear program.
Combining \eqref{chain10-lb}--\eqref{chain12-lb} with the fact that $u^\varepsilon(t_N, m_N, \xi_N, \eta_N) =
\varphi(t_N,\xi_N,\eta_N) = u(t_N, \xi_N, \eta_N)$ gives the desired lower bound
$$
u^\varepsilon(t_0, m_0, \xi_0, \eta_0 )\geq u(t_0,\xi_0,\eta_0)- C[(T-t) + \varepsilon]\varepsilon .
$$
\end{proof}

\begin{remark} \label{remark-after-lb}
The observations in Remarks \ref{lb-proof-uses-only-feasibility} and \ref{indifference-implies-sign-condition-not-needed} apply
here as well: since the proof of the lower bound uses only the feasibility (not the optimality) of
$(\beta_0, \ldots, \beta_{2^d-1},C_d^*)$ for the market's linear program, the same argument can be applied using
{\it any} feasible point for that linear program. For example, to get an upper
bound that's worse than that of Theorem \ref{Dlow} but more explicit, one can take $\beta_m = 0$
for all $m$ and replace $C_d^*$ in \eqref{PDEsharpDL} by
$$
\min_{s \in \{ \rm cycles \}} \frac{1}{|s|} \sum_{
\substack{{\rm vertices} \, m \\ {\rm on} \, {\rm cycle} \, s}
} (q(m) - r(m))^2 .
$$
\end{remark}

\begin{remark} \label{remark-on-matching-bounds} We know from Lemmas \ref{lemma:investor-LP} and \ref{lemma:market-LP} that
$C_d^* \leq C_d^\#$. Our upper and lower bounds match asymptotically in the limit $\varepsilon \rightarrow 0$ if and only if
$C_d^* = C_d^\#$. By Lemmas \ref{lemma:investor-LP}(c) and \ref{lemma:market-LP}(c), this relation holds when there is a strategy that
achieves indifference.
\end{remark}

\subsection{The classic case, when $\varphi= \frac{1}{2}(\eta + |\xi|)$} \label{sec5-3}

The classic goal of minimizing regret with respect to the best-performing expert corresponds to
using the non-smooth final-time data $\varphi = (\eta + |\xi|)/2$. In Section \ref{bounds-d=1}, which focused on the case
$d=1$, we showed in Theorem \ref{1classic} how our upper and lower bounds can be adapted to this case. The proof involved
approximating $\varphi$ by something a bit smoother, then examining the various error terms.

For the general case $d \geq 1$, the proofs of our upper and lower bounds (Theorems \ref{1up} and \ref{1low}) were largely
parallel to the case $d=1$. The main difference was in some sense bookkeeping, namely our use of the cycle decomposition of
a closed walk on the relevant graph; for $d=1$ the cycle decomposition was so simple that we avoided discussing it
explicitly, though it was implicit in our discussion of \eqref{same-number}--\eqref{wandering-of-D}.

In view of the parallels between our bounds for $d =1$ and those for the general case $d \geq 1$, it is not surprising that
the results in this section can be extended to the classic case $\varphi = (\eta + |\xi|)/2$.

\begin{theorem} \label{DclassicU}
Let $d\geq1$ be an integer and consider the classic final-time data $\varphi = (\eta + |\xi|)/2$. We consider, as usual,
the function $u^\varepsilon(t,m,\xi,\eta)$ defined by the dynamic programming principle \eqref{DPP}
with final-time data $\varphi$. Let $u_+(t,\xi,\eta)$ solve our upper-bound PDE \eqref{PDEsharpDU} and let
$u_-(t,\xi,\eta)$ solve our lower-bound PDE \eqref{PDEsharpDL}, with final-time data $\varphi$ in both cases. Then
there is a constant $C$ (independent of $\varepsilon$, $t$, and $T$) such that
\begin{equation} \label{estimate-classic-d}
u_-(t,\xi,\eta) - C \varepsilon |\log \varepsilon| \leq
 u^\varepsilon(t, m, \xi, \eta) \leq
 u_+(t,\xi,\eta) + C \varepsilon |\log \varepsilon|
\end{equation}
for all $t < T$, $\xi \in \mathbb{R}$, $\eta \in \mathbb{R}$, and $m\in\{0,1\}^d$, provided that
$\varepsilon$ is small enough and $t$ is such that $N=(T-t)/\varepsilon^2$ is an integer.
\end{theorem}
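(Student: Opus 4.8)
The plan is to imitate the proof of Theorem~\ref{1classic}, replacing the $d=1$ bounds by their general-$d$ counterparts (Theorems~\ref{Dup} and~\ref{Dlow}) and keeping careful track of the error terms at a time-dependent rate. As in Section~\ref{sec2-5}, for the classic data the PDEs \eqref{PDEsharpDU} and \eqref{PDEsharpDL} decouple: their solutions are $u_\pm(t,\xi,\eta)=\frac12\eta+\overline{u}_\pm(t,\xi)$, where $\overline{u}_\pm$ solves the linear heat equation $\overline{u}_t+\frac12 C\,\overline{u}_{\xi\xi}=0$ with $C=C^\#_d$ (resp.\ $C^*_d$) and final data $\frac12|\xi|$. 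Both diffusion constants are strictly positive --- the experts are distinct by \eqref{q-and-r-distinct}, so the lower bounds in Lemmas~\ref{lemma:investor-LP}(a) and~\ref{lemma:market-LP}(a) are positive --- so the explicit heat kernel gives the decay estimates $|\partial_\xi^k\partial_t^l\overline{u}_\pm|\leq C_{k,l}(T-t)^{-(k+2l-1)/2}$ exactly as in \eqref{estimates-for-u}; one checks at the same time that $u_t<0$, $u_\eta=\frac12>0$ and $|u_\xi|\leq u_\eta$, so the structural hypotheses \eqref{require-u-smooth}--\eqref{require-time-deriv-neg} hold for the shifted functions used below. I would then run the proofs of Theorems~\ref{Dup} and~\ref{Dlow} with $u_\pm$ replaced by the smooth shifted function $\tilde{u}_\pm(t,\xi,\eta)=u_\pm(t-\delta,\xi,\eta)$ (which solves the same PDE with smoothed final data), and choose $\delta\sim\varepsilon^2$ at the end.

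The bulk of the work is bookkeeping the error terms at a $k$-dependent rate, in the spirit of the refined estimates in the proof of Theorem~\ref{1classic}. Writing $\tau_k=T-t_k+\delta$, the derivatives of $\tilde{u}_\pm$ at step $k$ obey \eqref{estimates-for-u} with $T-t$ replaced by $\tau_k$, and sums over steps are handled by the Riemann-sum comparison \eqref{riemann-sum-estimate}. The error terms to revisit are: (i) the $O(\varepsilon^3)$ Taylor-expansion error in each increment $U_k$, which (splitting the increment into a spatial part estimated to second order and a temporal part estimated to first order) is now of order $\tau_k^{-1}\varepsilon^3+\tau_k^{-3/2}\varepsilon^4$ and sums to $O(\varepsilon|\log\delta|+\varepsilon)$; (ii) the error from evaluating $B$ at $f^*$ rather than $f$, of order $\tau_k^{-1/2}\varepsilon^3$ for the upper bound and (with $\gamma\sim\tau_k^{-1/2}$ in the market's Case~2) of order $\tau_k^{-1}\varepsilon^3$ for the lower bound, each summing to at most $O(\varepsilon|\log\delta|+\varepsilon)$; (iii) the error incurred in \eqref{Lo}--\eqref{chain8a} by freezing $u_t$ and $D$ at the start of each cycle instance; (iv) the finitely many boundary terms near $t=T$, namely the increments \eqref{increments-beyond-N} along the Eulerian extension and the cycles meeting the extended part, each of order at most $\delta^{-1/2}\varepsilon^2=O(\varepsilon)$; and (v) the comparison $|u_\pm(t-\delta,\cdot)-u_\pm(t,\cdot)|=\int_{t-\delta}^{t}|\partial_s u_\pm|\,ds=O(\sqrt{\delta})$, evaluated both at the time entering the estimate and at $t=T$.

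The step that needs real care is (iii). In the smooth case the per-step change of $L$ was simply $O(\varepsilon)$, and one summed $\sum_i\sum_\alpha\sum_j \varepsilon^{-2}(t^\alpha_{i,j}-t^\alpha_{i,1})\cdot\varepsilon$, using the disjointness \eqref{ordering-of-times} of the time-spans of successive instances of a fixed cycle to bound $\sum_\alpha(t^\alpha_{i,j}-t^\alpha_{i,1})\leq T-t_0$. For the classic data the per-step change of $L(t,m,b,\xi,\eta,f^\#_{t,m})=u_t+(q(m)-r(m))^2D-b\beta_m D$ at walk-step $\ell$ is instead $O(\tau_\ell^{-1}\varepsilon)$: since $u_t$ is a constant multiple of $D=\frac12\overline{u}_{\xi\xi}$, its spatial Lipschitz constant involves third $\xi$-derivatives, $\sim\tau_\ell^{-1}$, while the temporal part $\sim\tau_\ell^{-3/2}\varepsilon^2$ is smaller because $\tau_\ell\geq 2\varepsilon^2$. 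Writing $L(t^\alpha_{i,j},\cdot)-L(t^\alpha_{i,1},\cdot)$ as a telescoping sum of per-step changes and interchanging orders of summation, each walk-step $\ell$ is charged at most $\sum_{i=1}^{n}|s_i|$ times --- here Lemma~\ref{walkcycles-refined} is essential, since it guarantees that for fixed $i$ the step $\ell$ lies in the time-span of at most one instance of the cycle $s_i$. Hence the total type-(iii) error is at most a constant times $\varepsilon^2\cdot\varepsilon\sum_{\ell=0}^{N-1}\tau_\ell^{-1}\leq C\varepsilon^3\cdot\varepsilon^{-2}|\log\delta|=C\varepsilon|\log\delta|$ by \eqref{riemann-sum-estimate}.

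Collecting (i)--(v), both the upper-bound argument (applied with $\tilde{u}_+$) and the lower-bound argument (applied with $\tilde{u}_-$) give $|u^\varepsilon(t,m,\xi,\eta)-u_\pm(t,\xi,\eta)|\leq C(\varepsilon|\log\delta|+\varepsilon+\sqrt{\delta})$ whenever $\delta^{-1}\varepsilon^2\leq\frac12$; taking $\delta=2\varepsilon^2$ yields \eqref{estimate-classic-d}. (When $C^*_d=C^\#_d$, e.g.\ for $d\leq 4$ by Section~\ref{sec4-4}, the two bounds coalesce into $|u^\varepsilon-u|\leq C\varepsilon|\log\varepsilon|$.) I expect no conceptual novelty beyond what is already in Theorems~\ref{1classic}, \ref{Dup} and~\ref{Dlow}; the one place demanding genuine attention is the interaction between the cycle decomposition and the near-final-time blow-up of the derivatives, resolved by the charging argument of the previous paragraph.
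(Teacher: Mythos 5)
Your proposal is correct, and it follows the paper's approach: the paper's own proof of Theorem~\ref{DclassicU} consists of a single sentence ("one simply reviews the proofs of Theorems~\ref{Dup} and~\ref{Dlow}, estimating the magnitude of each error term as in the proof of Theorem~\ref{1classic}") and leaves all details to the reader, which is exactly what you have supplied. Your tracking of error terms (i)--(v) matches what the paper's explicit treatment for $d=1$ does, and you correctly identify (iii) as the one point where the general-$d$ cycle decomposition interacts nontrivially with the near-final-time blow-up of derivatives. Your charging argument --- telescoping $L(t^\alpha_{i,j},\cdot)-L(t^\alpha_{i,1},\cdot)$, noting the per-step change of $D=\frac12\overline{u}_{\xi\xi}$ is $O(\tau_\ell^{-1}\varepsilon)$ (the temporal part being dominated since $\tau_\ell\geq 2\varepsilon^2$), and using Lemma~\ref{walkcycles-refined} to guarantee each walk-step is charged at most $|s_i|$ times per cycle $s_i$ --- is precisely the right generalization of the $d=1$ argument built around \eqref{same-number}--\eqref{wandering-of-D}, and it correctly yields a $\sum_\ell\tau_\ell^{-1}=O(\varepsilon^{-2}|\log\delta|)$ contribution that closes with $\delta\sim\varepsilon^2$.

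One small slip in the justification: you cite Lemma~\ref{lemma:market-LP}(a) as giving a positive \emph{lower} bound on $C^*_d$, but that lemma gives an \emph{upper} bound $M\leq 2^{-d}\sum_m(q(m)-r(m))^2$ on feasible $M$. In fact $C^*_d>0$ is not automatic from \eqref{q-and-r-distinct}: e.g.\ if $q(0)=r(0)$ the self-loop at vertex $0$ already constrains the naive choice $\beta\equiv 0$ to $M\leq 0$, and whether a better $\beta$ salvages positivity depends on the particular $q,r$. This does not break the theorem: if $C^*_d=0$ then $u_-\equiv\varphi$ and the left half of \eqref{estimate-classic-d} is trivial (indeed $u^\varepsilon\geq\varphi$ follows directly from \eqref{DPP} by induction, using the convexity of $\varphi$ along the $b$-direction). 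But the smooth-solution machinery you invoke does require $C^*_d>0$, so the statement needs either that hypothesis or the trivial fallback just noted; this is a gap the paper glosses over as well, so your treatment is on par with the source.
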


\begin{proof}
The proof requires no new ideas: one simply reviews the proofs of Theorems \ref{Dup} and \ref{Dlow}, estimating the magnitude of each
error term as in the proof of Theorem \ref{1classic}. We leave the details to the reader.
\end{proof}

\section{Appendix: solving our PDE using the linear heat equation} \label{APP}

Our bounds involve solutions of the final-value problem
\begin{align} \label{PDE-with-diffusion-const-C}
\begin{split}
&u_t+\frac{1}{2} C \langle D^2 u \frac{\nabla^\bot u}{ u_{\eta}},
\frac{\nabla^\bot u}{ u_{\eta}} \rangle = 0,  \\
& u(T,\xi,\eta) = \varphi(\xi, \eta)
\end{split}
\end{align}
with various choices of the ``diffusion constant'' $C$. We rely on the
existence of a sufficiently smooth solution with certain qualitative properties, namely
\eqref{require-u-smooth}--\eqref{require-time-deriv-neg}. We asserted in
Section \ref{sec2-5} the existence of such a solution under certain conditions upon
$\varphi$, namely \eqref{require-phi-smooth}--\eqref{require-cond-neg-time-deriv}. The main goal
of this appendix is to prove that assertion. We also briefly discuss a geometric
interpretation of the PDE, and we give an explicit formula for $u$ in the classic
case $\varphi = \half (\eta + |\xi|)$. All these results are already known \cite{Zhu},
but we present a self-contained treatment here for the reader's convenience.

\subsection{Motivation} \label{sec6-1}
We begin with the observation that if $\varphi(\xi,\eta)$ satisfies
\begin{equation} \label{phi-eta-pos}
\varphi_\eta \geq c > 0
\end{equation}
for some constant $c$, then for each $y \in \RR$ there is a unique
value $g(\xi;y)$ such that
\begin{equation} \label{phi-determines-g}
\varphi(\xi, g(\xi;y)) = y.
\end{equation}
Thus: for each $y$ the level set $\{ \varphi = y \}$ is the graph of a function
$\xi \mapsto g(\xi;y)$. These graphs foliate the $(\xi,\eta)$ plane and provide an alternative
representation of the function $\varphi$. Implicit differentiation in $\xi$ reveals that
$$
\varphi_\xi + \varphi_\eta g_\xi = 0 \quad \mbox{and} \quad
\varphi_{\xi \xi} + 2 \varphi_{\xi \eta} g_\xi + \varphi_{\eta \eta} g_\xi^2 + \varphi_\eta g_{\xi \xi} = 0
$$
when $\eta = g(\xi;y)$.
In view of \eqref{phi-eta-pos} we can solve for $g_\xi$ and $g_{\xi \xi}$, obtaining
\begin{equation} \label{derivs-of-g}
g_\xi = - \frac{1}{\varphi_\eta} \varphi_\xi \quad \mbox{and} \quad g_{\xi \xi} =
-\frac{1}{\varphi_\eta^3}
(\varphi_{\xi \xi}\varphi_\eta^2 - 2\varphi_{\xi \eta}\varphi_\xi \varphi_\eta +
\varphi_{\eta \eta}\varphi_\xi^2).
\end{equation}
There is also a simple formula for $g_y$: differentiation of \eqref{phi-determines-g} with respect to $y$
gives
\begin{equation} \label{g_y}
g_y = 1/\varphi_\eta.
\end{equation}

If we accept that the PDE \eqref{PDE-with-diffusion-const-C} has a solution
$u(t,\xi,\eta)$ with $u_\eta > c > 0$, then the preceding discussion applies to it as
well (treating the time $t$ as a parameter). So there is a function $h(t,\xi;y)$
such that
\begin{equation} \label{u-determines-h}
u(t,\xi, h(t,\xi;y)) = y
\end{equation}
for all $\xi$ and $y$, and all $t < T$. Differentiation in time gives
$$
u_t + u_\eta h_t = 0
$$
and differentiation with respect to $\xi$ gives the analogue of \eqref{derivs-of-g}:
\begin{equation} \label{derivs-of-h}
h_\xi = - \frac{1}{u_\eta} u_\xi \quad \mbox{and} \quad h_{\xi \xi} =
-\frac{1}{u_\eta^3}
(u_{\xi \xi}u_\eta^2 - 2 u_{\xi \eta} u_\xi u_\eta + u_{\eta \eta}u_\xi^2)
\end{equation}
when $\eta = h(t,\xi;y)$. Since
\begin{equation} \label{spatial-operator-explicit}
\langle D^2 u \nabla^\bot u, \nabla^\bot u \rangle =
u_{\xi \xi}u_\eta^2 - 2 u_{\xi \eta} u_\xi u_\eta + u_{\eta \eta}u_\xi^2,
\end{equation}
we see that \eqref{PDE-with-diffusion-const-C} holds exactly if $h(t,\xi;y)$ solves
the linear heat equation
\begin{equation} \label{h-eqn-with-constant-C}
h_t + \frac{1}{2} C h_{\xi \xi} = 0
\end{equation}
for all $y$ and $\xi$ and all $t < T$, with the final-time condition
\begin{equation} \label{h-eqn-final-value}
h(T, \xi; y) = g(\xi; y).
\end{equation}
The hypothesis that $u_\eta$ is positive is consistent with this PDE characterization of $h$: indeed, differentiating
\eqref{u-determines-h} with respect to $y$ gives
\begin{equation} \label{h_y}
u_\eta = 1 / h_y,
\end{equation}
and differentiation of \eqref{h-eqn-with-constant-C}--\eqref{h-eqn-final-value} reveals that
\begin{equation} \label{u-eta-lower-bound}
\mbox{$\varphi_\eta \geq c$ implies $u_\eta \geq c$}
\end{equation}
for any constant $c>0$, using the maximum principle for the linear heat equation and the relations
$u_\eta = 1/h_\eta$, $g_\eta = 1/\varphi_\eta$.

Summarizing: we have shown that if $u$ exists and is sufficiently smooth, then for each $y$ the
level set $\{ u=y \}$ is the graph of a function $h(t,\xi;y)$, where $h$ solves the
final-value problem \eqref{h-eqn-with-constant-C}--\eqref{h-eqn-final-value} in $t$ and $\xi$.

\subsection{Existence and properties of $u$}

Our construction of $u$ reverses the preceding discussion: we solve the linear heat equation to get $h$, then
use $h$ to get $u$.

\begin{theorem} \label{ZhuT}
Suppose $\varphi(\xi,\eta)$ is a $C^4$ function on $\RR^2$, with
\begin{equation} \label{phi-eta-pos-bis}
\varphi_\eta \geq c > 0.
\end{equation}
for some constant $c$. While $\varphi$ can (indeed, must) have linear growth at infinity, we assume
that $\varphi_\xi, \varphi_\eta$, and all higher derivatives of order up to $4$ are
bounded.
Then for any positive constant $C$ the final-value problem
\begin{align} \label{PDE-with-diffusion-const-C-bis}
\begin{split}
&u_t+\frac{1}{2} C \langle D^2 u \frac{\nabla^\bot u}{ u_{\eta}},
\frac{\nabla^\bot u}{ u_{\eta}} \rangle = 0 \ \mbox{for $t<T$}\\
& u(T,\xi,\eta) = \varphi(\xi, \eta)
\end{split}
\end{align}
has a classical solution $u(t,\xi,\eta)$ such that
\begin{gather}
u_\eta \geq c \label{u-eta-pos}\\
\mbox{$u$ has continuous, uniformly bounded derivatives in $(\xi, \eta)$ of order up to $4$}
\label{bounds-on-u} \\
\mbox{$u_t$ has continuous, uniformly bounded derivatives in $(\xi, \eta)$ of order up to $2$, and}
\label{bounds-on-ut}\\
\mbox{$u_{tt}$ is continuous and uniformly bounded,} \label{bound-on-utt}
\end{gather}
all bounds being uniform as $t \uparrow T$.
Moreover, the following structural properties of $\varphi$ persist to $u$:
\begin{gather}
\mbox{if $|\varphi_\xi| \leq \varphi_\eta$ for all $\xi,\eta$, then $|u_\xi| \leq u_\eta$ for all $\xi,\eta$
and all $t \leq T$;}
\label{property1}\\
\mbox{if $\varphi_{\xi \xi}\varphi_\eta^2 - 2\varphi_{\xi \eta}\varphi_\xi \varphi_\eta +
\varphi_{\eta \eta}\varphi_\xi^2 \geq 0$ for all $\xi,\eta$, then $u_t \leq 0$ for all
$\xi,\eta$ and all $t \leq T$;}
\label{property2}\\
\mbox{if $\varphi$ is odd in $\xi$ then so is $u$.}
\label{property3}
\end{gather}
\end{theorem}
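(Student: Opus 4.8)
The plan is to reverse the motivating calculation of Section \ref{sec6-1}: rather than solving the nonlinear PDE directly, I will solve the \emph{linear} heat equation for the level-set function $h$ and then reconstruct $u$ from $h$. First, given $\varphi$ satisfying \eqref{phi-eta-pos-bis}, define for each $y\in\RR$ the function $g(\xi;y)$ by the relation $\varphi(\xi,g(\xi;y))=y$; the implicit function theorem applies since $\varphi_\eta\geq c>0$, and the formulas \eqref{derivs-of-g} and \eqref{g_y} give $g_\xi=-\varphi_\xi/\varphi_\eta$, $g_y=1/\varphi_\eta$, together with bounded derivatives of $g$ in $\xi$ of order up to $4$ (using the chain rule and the assumed bounds on the derivatives of $\varphi$). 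Next, for each fixed $y$, solve the backward heat equation $h_t+\tfrac12 C h_{\xi\xi}=0$ for $t<T$ with final data $h(T,\xi;y)=g(\xi;y)$, via the explicit Gaussian convolution formula. Standard heat-kernel estimates, applied together with the uniform bounds on the $\xi$-derivatives of $g$, show that $h(t,\xi;y)$ is smooth and has uniformly bounded $\xi$-derivatives of order up to $4$ for all $t\le T$, uniformly as $t\uparrow T$; moreover $h_t=-\tfrac12 C h_{\xi\xi}$ and its $\xi$-derivatives of order up to $2$ are likewise bounded, and $h_{tt}$ is bounded. Finally I differentiate $g$ (hence $h$ at $t=T$) with respect to $y$: since $g_y=1/\varphi_\eta$ and $\varphi_\eta\ge c$, we have $0<g_y\le 1/c$; applying the maximum principle to the heat equation satisfied by $h_y$ (differentiate \eqref{h-eqn-with-constant-C} in $y$) yields $0<h_y\le 1/c$ for all $t\le T$, so $h$ is strictly increasing in $y$ and its inverse is well-defined.

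Now define $u(t,\xi,\eta)$ implicitly by $h(t,\xi;u(t,\xi,\eta))=\eta$, i.e. $u$ is the inverse of $y\mapsto h(t,\xi;y)$; this is legitimate because $h_y>0$. From $h_y\le 1/c$ we get $u_\eta=1/h_y\ge c$, which is \eqref{u-eta-pos}. To verify that $u$ solves the nonlinear PDE \eqref{PDE-with-diffusion-const-C-bis}, I reverse the computation in Section \ref{sec6-1}: differentiating the identity $u(t,\xi,h(t,\xi;y))=y$ in $t$ and in $\xi$ gives exactly the relations \eqref{derivs-of-h}, namely $h_\xi=-u_\xi/u_\eta$, $h_{\xi\xi}=-u_\eta^{-3}(u_{\xi\xi}u_\eta^2-2u_{\xi\eta}u_\xi u_\eta+u_{\eta\eta}u_\xi^2)$, and $h_t=-u_t/u_\eta$ (all evaluated at $\eta=h(t,\xi;y)$); substituting these into $h_t+\tfrac12 C h_{\xi\xi}=0$ and multiplying through by $-u_\eta$ produces precisely $u_t+\tfrac12 C\langle D^2u\,\nabla^\bot u/u_\eta,\nabla^\bot u/u_\eta\rangle=0$, using the algebraic identity \eqref{spatial-operator-explicit}. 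The final data is correct because $h(T,\xi;y)=g(\xi;y)$ is the inverse (in the $\eta$/$y$ variables) of $\varphi$, so $u(T,\xi,\eta)=\varphi(\xi,\eta)$.

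For the smoothness claims \eqref{bounds-on-u}--\eqref{bound-on-utt}: these follow from the bounds on $h$ by the inverse/implicit function theorem, expressing derivatives of $u$ in terms of derivatives of $h$ and the reciprocal $1/h_y$ (which is bounded since $h_y$ is bounded above \emph{and} below). One writes $u_\xi=-h_\xi/h_y$, $u_\eta=1/h_y$, and then higher derivatives of $u$ by repeated differentiation; each such expression is a rational function of derivatives of $h$ with denominator a power of $h_y$, so the uniform bounds on $h$ (derivatives of order up to $4$ in $\xi$, order up to $2$ for $h_t$, and $h_{tt}$) together with $h_y\ge c>0$ give the stated uniform bounds on $u$, uniformly as $t\uparrow T$. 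For the structural property \eqref{property1}: $|u_\xi|\le u_\eta$ is equivalent to $|h_\xi|\le 1$, and $h_\xi$ solves the heat equation with final data $g_\xi=-\varphi_\xi/\varphi_\eta$, which satisfies $|g_\xi|\le 1$ exactly when $|\varphi_\xi|\le\varphi_\eta$; the maximum principle propagates this bound to all $t<T$. For \eqref{property2}: $u_t\le 0$ is equivalent (since $u_\eta>0$) to $h_t\ge 0$, i.e. $h_{\xi\xi}\le 0$; but $h_{\xi\xi}$ solves the heat equation with final data $g_{\xi\xi}=-\varphi_\eta^{-3}(\varphi_{\xi\xi}\varphi_\eta^2-2\varphi_{\xi\eta}\varphi_\xi\varphi_\eta+\varphi_{\eta\eta}\varphi_\xi^2)$, which is $\le 0$ precisely under the stated hypothesis, so again the maximum principle does the job. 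For \eqref{property3}: if $\varphi$ is odd in $\xi$ then $g(\xi;y)$ satisfies $g(-\xi;y)=g(\xi;-y)$ (check directly from the defining relation together with $\varphi(-\xi,-\eta)=-\varphi(\xi,\eta)$), this symmetry is preserved by the (even) heat kernel, and it transfers back to give $u(t,-\xi,-\eta)=-u(t,\xi,\eta)$.

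The main obstacle is not conceptual but technical: one must be careful that all the bounds on $h$ are genuinely \emph{uniform as $t\uparrow T$} despite $\varphi$ having only linear growth (not decay) at infinity. The point is that only \emph{derivatives} of $g$ are bounded, not $g$ itself, and differentiating the heat-kernel convolution formula moves all derivatives onto $g$; since $g_\xi,\dots,\partial_\xi^4 g$ are bounded, convolution with the (probability) heat kernel preserves those bounds with no blow-up as $t\to T$. Likewise one must check that $h_y$ stays bounded \emph{away from zero} uniformly in $t$ — this is the crucial point making $u$ smooth up to the final time — and this follows from the maximum principle applied to $h_y$, whose final data $1/\varphi_\eta$ lies in $[1/\|\varphi_\eta\|_\infty,\,1/c]$. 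Everything else is routine bookkeeping with the chain rule and the implicit function theorem.
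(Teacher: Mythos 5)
Your proposal follows essentially the same route as the paper's proof: obtain the level-set function $g$ from $\varphi$ via the inverse (implicit) function theorem, solve the backward linear heat equation for $h$ with final data $g$, invert $h$ in the $y$ variable to reconstruct $u$, and propagate the smoothness and structural conditions by applying the maximum principle to derivatives of $h$. The one place you deviate is property \eqref{property3}: the literal reading ``$\varphi(-\xi,\eta)=-\varphi(\xi,\eta)$'' forces $\varphi(0,\eta)\equiv 0$ and hence $\varphi_\eta(0,\eta)=0$, contradicting \eqref{phi-eta-pos-bis}, so that condition is vacuous as stated (the paper's own proof of \eqref{property3} actually argues evenness, consistent with the classic case $\varphi=\tfrac12(\eta+|\xi|)$). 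Your joint-odd interpretation $\varphi(-\xi,-\eta)=-\varphi(\xi,\eta)$ is a reasonable reading, but the induced symmetry of $g$ should be $g(-\xi;y)=-g(\xi;-y)$, not $g(-\xi;y)=g(\xi;-y)$ as you wrote; this sign error does not affect your final conclusion $u(t,-\xi,-\eta)=-u(t,\xi,\eta)$, which follows correctly once the relation is fixed. The rest of the argument --- in particular your explicit discussion of why the bounds are uniform as $t\uparrow T$ (only derivatives of $g$, not $g$ itself, enter the heat-kernel convolution) and why $h_y$ is bounded away from zero (so the inversion in $y$ is uniformly well-posed) --- is correct and matches the paper's reasoning.
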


\begin{proof}
We begin with the level-set representation of $\varphi$, i.e. with the function $g(\xi;y)$ defined by
\eqref{phi-determines-g}. To assess the smoothness of $g$ we apply the inverse function theorem to the
maps $G$ and $\Phi$ defined by
\begin{equation} \label{inverse-function-for-g}
(\xi, y) \stackrel{G}{\rightarrow} (\xi, g(\xi; y))  \quad \mbox{and} \quad
(\xi, \eta) \stackrel{\Phi}{\rightarrow} (\xi, \varphi(\xi, \eta)).
\end{equation}
The relation $\varphi(\xi,g(\xi;y)) = y$ says that $\Phi \circ G$ is the identity map, so $G$ is the
inverse of $\Phi$. The Jacobian of $\Phi$ is uniformly bounded and positive, by \eqref{phi-eta-pos-bis}. It follows,
by the inverse function theorem (see e.g. \cite{Dieu}) that $G$ has the same smoothness as $\Phi$; in
particular, since $\varphi$ has uniformly bounded derivatives of order up to $4$, so does $g$.

Now consider the solution $h(t,\xi; y)$ of the final-value problem \eqref{h-eqn-with-constant-C}--\eqref{h-eqn-final-value}. Differentiating the PDE in $\xi$ and/or $y$
and applying the maximum principle for the linear heat equation,
we see that the derivatives of $h$ in $\xi,y$ of order up to $4$ are uniformly bounded. Since
$g_y$ is uniformly positive as well as uniformly bounded (by \eqref{g_y}), the same is true of $h_y$.
So for each $t$, the graphs $\eta = h(t,\xi;y)$ foliate the $(\xi,\eta)$ plane as $y$ varies, determining
a unique function $u$ such that \eqref{u-determines-h} holds. The arguments in Section \ref{sec6-1} show that
$u$ is a classical solution of the PDE \eqref{PDE-with-diffusion-const-C-bis}, and that $u_\eta \geq c$. To assess
its regularity in $\xi$ and $\eta$, we observe that for each fixed $t$ the functions
$$
(\xi, y) \rightarrow (\xi, h(t,\xi; y))  \quad \mbox{and} \quad
(\xi, \eta) \rightarrow (\xi, u(t, \xi, \eta))
$$
are inverse to one another; so the regularity of $h$ implies, via the inverse function theorem, that
$u$ has uniformly bounded derivatives of order up to $4$ in $\xi$ and $\eta$. The regularity of $u$
in $t$ is best assessed using the PDE: since $\langle D^2 u \frac{\nabla^\bot u}{ u_{\eta}},
\frac{\nabla^\bot u}{ u_{\eta}} \rangle$ has bounded second spatial derivatives, so does $u_t$. It follows
immediately (by differentiating the PDE in $t$) that $u_{tt}$ is also uniformly bounded.

Turning now to qualitative properties, recall from \eqref{derivs-of-g} and \eqref{derivs-of-h} that
$|\varphi_\xi| \leq \varphi_\eta$ is equivalent to $|g_\xi| \leq 1$, and $|u_\xi| \leq u_\eta$ is equivalent
to $|h_\xi| \leq 1$. Since $|g_\xi| \leq 1$ implies $|h_\xi| \leq 1$ by the maximum principle for the linear
heat equation, \eqref{property1} is clear.

Next, recall from \eqref{derivs-of-g}--\eqref{spatial-operator-explicit} that
$ \varphi_{\xi \xi}\varphi_\eta^2 - 2\varphi_{\xi \eta}\varphi_\xi \varphi_\eta +
\varphi_{\eta \eta}\varphi_\xi^2 \geq 0$ is equivalent to $g_{\xi \xi} \leq 0$, while
$u_{\xi \xi} u_\eta^2 - 2 u_{\xi \eta} u_\xi u_\eta + u_{\eta \eta} u_\xi^2 \geq 0$ is
equivalent to both $h_{\xi \xi} \leq 0$ and $u_t \leq 0$. Since $h_{\xi \xi}$ solves a linear
heat equation with $g_{\xi \xi}$ as final-time data, \eqref{property2} follows once again from the
maximum principle for the linear heat equation.

Finally, recall from \eqref{phi-determines-g} and \eqref{u-determines-h} that
$\varphi(\xi,\eta) = \varphi(-\xi,\eta)$ is equivalent to $g(\xi) = g(-\xi)$, and
$u(\xi,\eta) = u(-\xi,\eta)$ is equivalent to $h(t,\xi,\eta) = h(t, -\xi, \eta)$. So
\eqref{property3} follows from the fact that the solution of a linear heat equation is
odd if the final-time data are odd.
\end{proof}

\subsection{Geometric interpretation of $u$}

We have shown that for each $y$, the evolution of the level set $u=y$ can be found without
considering any other level sets (by solving a linear heat equation). Second-order parabolic
PDE's with this property are called ``geometric,'' because the normal velocity of each level
set can be written in terms of its normal direction and curvature \cite{giga-book}.

It is natural to ask what our PDE \eqref{PDE-with-diffusion-const-C} looks like
from this perspective (though the main part of our paper makes no use of this result).
The answer is that the normal velocity of the
level set $\{ u = y \}$, viewed as a curve in the $(\xi,\eta)$ plane, is related to the level set's
unit normal $\vec{n}$ and curvature $\kappa$ by
\begin{equation} \label{v_nor}
v_{nor} = - \frac{C}{2} \frac{|\nabla u|^2}{|u_\eta|^2} \kappa =
- \frac{C}{2(\vec{n} \cdot (0,1))^2} \kappa.
\end{equation}
Indeed, the curvature of a level set
$$
\kappa = - \mbox{div} \left( \frac{\nabla u}{|\nabla u|} \right)
$$
has the property that
$$
\langle D^2 u \frac{\nabla^\bot u}{|\nabla u|},
\frac{\nabla^\bot u}{|\nabla u|} \rangle = -\kappa |\nabla u|,
$$
so the PDE \eqref{PDE-with-diffusion-const-C} says
$$
u_t = \frac{1}{2}C \kappa \frac{|\nabla u|^3}{u_\eta^2}.
$$
This leads directly to \eqref{v_nor}, since the velocity of the level set is (by definition)
$v_{nor}=  -u_t / |\nabla u| $ and the unit normal is $\vec{n} = (u_\xi,u_\eta)/|\nabla u|$.

\subsection{Explicit solution for the classic final-time data}

As we observed in the Introduction, in the classic case $\varphi = \frac{1}{2}(\eta + |\nabla \xi|)$ the
solution of \eqref{PDE-with-diffusion-const-C} has the form $u = \frac{1}{2} \eta + \overline{u}(t,\xi)$, where
$\overline{u}$ solves the linear heat equation $\overline{u}_{t} + \frac{1}{2}C \overline{u}_{\xi \xi} = 0$
for $t<T$ with $\overline{u}(T,\xi) = \frac{1}{2} |\xi|$. It is amusing to observe that this
function $\overline{u}$ has an explicit formula, namely
\begin{equation} \label{ansg}
\overline{u}(t, \xi) = \sqrt{T-t} \, G \left( \xi / \sqrt{T-t} \right)
\end{equation}
where
\begin{equation} \label{Geqn}
G(z) = \sqrt{\frac{C}{2\pi}} \exp \left( \frac{-z^2}{2C} \right) +
\frac{z}{2} \erf \left( \frac{z}{\sqrt{2C}} \right)
\end{equation}
(with the usual convention that $\erf(x) = \frac{2}{\sqrt{\pi}} \int_0^x  e^{-s^2} \, ds$,
so that $\erf(x) \rightarrow 1$ as $x \rightarrow \infty$).
Indeed, a brief calculation reveals that a function of the form \eqref{ansg} solves
the desired PDE exactly if $G$ solves
$$
G(z)-zG'(z)-C G''(z)=0,
$$
and it is easy to check that the proposed function $G$ (given by \eqref{Geqn}) meets this requirement.
Turning to the final-time behavior: it is easy to check that our $G$ satisfies
$$
\lim_{z \rightarrow \infty} \frac{G(z)}{z}=\frac{1}{2} \quad \mbox{and} \quad
\lim_{z \rightarrow -\infty} \frac{G(z)}{z}=-\frac{1}{2},
$$
and it follows that
$$
\lim_{t \rightarrow T} \overline{u}(t,\xi) = |\xi|/2
$$
as expected.

\bibliography{TimeDepStrategies}
\bibliographystyle{plain}

\end{sloppypar}
\end{document}